\documentclass{amsart} 

\usepackage[T1]{fontenc}
\usepackage[utf8]{inputenc}
\usepackage[english]{babel}
\usepackage{lmodern}
\usepackage{microtype}
\usepackage[a4paper,margin=1in]{geometry}
\usepackage{amsmath,amssymb,amsfonts,mathtools}
\usepackage{amsthm}
\usepackage{graphicx}
\graphicspath{{figs/}}
\usepackage[percent]{overpic}
\usepackage[section]{placeins}
\usepackage{enumitem}
\usepackage{csquotes}
\usepackage[backend=biber,style=alphabetic,maxnames=99,giveninits=true,doi=false,url=false,eprint=true,isbn=false]{biblatex}
\usepackage{aas_macros}
\usepackage[colorlinks=true,allcolors=blue]{hyperref}
\allowdisplaybreaks

\renewbibmacro{in:}{}
\DeclareFieldFormat{pages}{#1}

\numberwithin{equation}{section}
\numberwithin{figure}{section}

\newtheorem{theorem}{Theorem}[section]
\newtheorem{lemma}[theorem]{Lemma}
\newtheorem{proposition}[theorem]{Proposition}
\newtheorem{corollary}[theorem]{Corollary}
\theoremstyle{definition}
\newtheorem{definition}[theorem]{Definition}

\newtheorem{example}[theorem]{Example}
\theoremstyle{remark}
\newtheorem{remark}[theorem]{Remark}

\newcommand{\A}{\mathcal{S}}

\let\oldDelta\Delta
\renewcommand{\Delta}{\oldDelta} 

\newcommand{\M}{M}

\newcommand{\R}{\mathbb{R}}
\newcommand{\Z}{\mathbb{Z}}

\renewcommand{\S}{S}

\let\oldSigma\Sigma
\renewcommand{\Sigma}{\oldSigma}

\let\oldtau\tau
\renewcommand{\tau}{\oldtau}

\let\oldOmega\Omega 
\renewcommand{\Omega}{\oldOmega}

\let\oldalpha\alpha 
\renewcommand{\alpha}{\oldalpha}
\newcommand{\alphast}{\oldalpha_{\mathrm{st}}}

\let\olddxi\xi 
\renewcommand{\xi}{\olddxi}
\newcommand{\xist}{\olddxi_{\mathrm{st}}}

\newcommand{\X}{X}

\let\Re\oldRe
\newcommand{\Re}{R}

\newcommand{\G}{G}

\let\L\oldL
\newcommand{\L}{L}

\let\oldgamma\Gamma 
\renewcommand{\Gamma}{\oldgamma}

\newcommand{\MCG}{\mathrm{MCG}}

\renewcommand{\aa}{\mathfrak{a}}

\newcommand{\dd}{\mathfrak{d}}
\renewcommand{\ll}{\mathfrak{l}}

\title{An Algorithm to Legendrian Realize a Curve on a Ribbon Surface}

\author{Eric Stenhede}

\address{University of Vienna, 
Faculty of Mathematics,
Oskar-Morgenstern-Platz 1,
1090 Wien, Austria}
\email{eric.stenhede@univie.ac.at}

\date{\today}
\keywords{Legendrian realization, contact surgery, open books, Legendrian knots}
\def\subjclassname{\textup{2020} Mathematics Subject Classification}
\expandafter\let\csname subjclassname@1991\endcsname=\subjclassname

\subjclass{
53D35; 53D10, 57K10, 57R65, 57K10, 57K33
}

\begin{document}

\begin{abstract}
We give an explicit algorithm to Legendrian realize a homologically nontrivial simple closed curve on a ribbon surface of a Legendrian graph in the standard contact structure $(\R^3,\xist)$. As an application, we obtain an algorithm that converts an abstract open book whose monodromy is written as a product of Dehn twists along homologically nontrivial curves into a contact surgery diagram for the supported contact manifold. Along the way, we also record a uniqueness statement which is implicit in earlier work but, to our knowledge, was never written in the form needed here: any two Legendrian realizations of the same curve on a ribbon surface are Legendrian isotopic, and likewise for Legendrian knots lying on pages of open books and representing the same isotopy class on the page.
\end{abstract}

\maketitle

\setcounter{tocdepth}{1}
\tableofcontents

\section{Introduction}\label{sec:introduction}

A recurring theme in low-dimensional topology is the search for efficient and flexible ways to present geometric objects. In contact topology, two complementary languages have proved particularly effective for studying contact $3$--manifolds:
\begin{itemize}
    \item \emph{contact surgery diagrams}, in which one presents a contact manifold as the result of contact $(\pm 1)$--surgery on a Legendrian link in $(S^3,\xi_{\mathrm{st}})$, and
    \item \emph{open book decompositions}, which, by Giroux's correspondence, encode contact structures in terms of surfaces and mapping classes \cite{Ding_Geiges_2004,Giroux02}.
\end{itemize}
The relationship between these two descriptions has been studied extensively. In one direction, Stipsicz described an algorithm for constructing a compatible open book from a Legendrian surgery diagram \cite{MR2155241}, and related constructions also appear in \cite{MR1825664,MR2282016,Avdek13}. In the opposite direction, Avdek showed how to build a contact surgery diagram from an abstract open book with connected binding whose monodromy is written as a product of Dehn twists along the Lickorish generators \cite{Avdek13}.

The main difficulty in extending Avdek's algorithm to more general abstract open books where the binding might not be connected or the monodromy might not be given in terms of Dehn twists along the Lickorish generators is making the Legendrian realization principle computable. If the monodromy is given as a product of Dehn twists along arbitrary homologically nontrivial curves, then one needs an explicit way to realize those curves as Legendrian knots on pages of an open book. The Legendrian realization principle guarantees existence after a suitable perturbation of the surface, but its proof is non-constructive and global in nature \cite{Kanda1998,Honda00,Etnyre04}.

The goal of this article is to make this realization problem explicit in the setting of ribbon surfaces of Legendrian graphs in $(\R^3,\xist)$. More precisely, the core result of the paper, stated in Section~\ref{section:Stat_results}, is an algorithm that takes as input a Legendrian graph $\G\subset(\R^3,\xist)$ together with a homologically nontrivial simple closed curve $\aa$ on a ribbon surface of $\G$, and produces the front projection of a Legendrian realization of $\aa$ (Theorem~\ref{thm:algorithm}). As an application, Section~\ref{sec:applications_OB} yields an explicit procedure that converts an abstract open book whose monodromy is written as a product of Dehn twists along homologically nontrivial curves into a contact surgery diagram for the supported contact manifold (Corollary~\ref{corollary}).

Given the front projection of a ribbon surface in $(\R^3,\xist)$, it is important to notice that there is no naive local way to Legendrian realize a curve on it. Many of the crossings occurring in the front projection of the boundary link of a ribbon violate the Legendrian condition, so in general one cannot simply introduce cusps where needed and hope to obtain a Legendrian knot. Figure~\ref{fig:example_algorithm_intro} shows a concrete instance of this phenomenon.

\begin{figure}[htbp]
    \centering
    \begin{overpic}[scale=0.9]{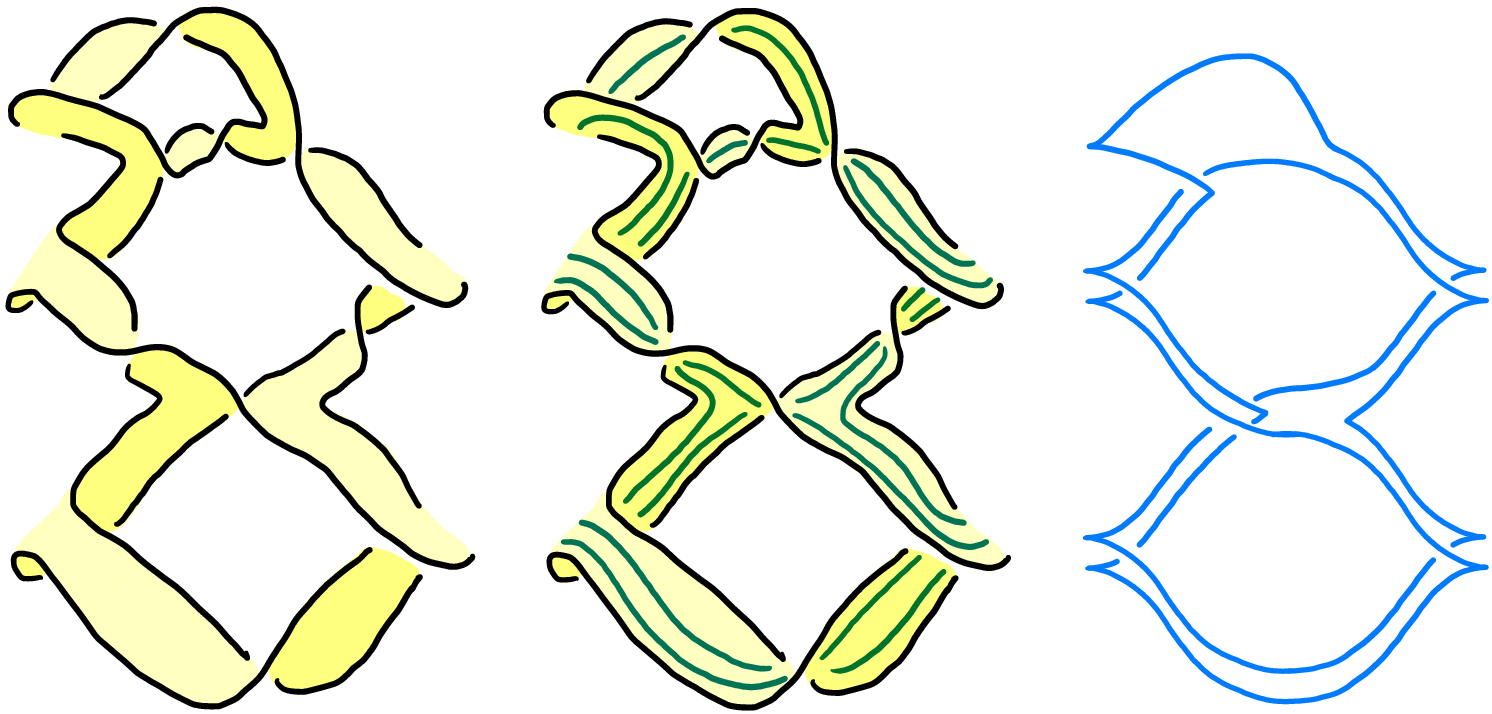}
        \put(2,23.5){$\rightarrow$}
    \end{overpic}
    \caption{Left: the front projection of a ribbon surface $\Sigma$. The crossing indicated by an arrow is one of the crossings that makes a naive Legendrian realization impossible. Middle: a homologically nontrivial curve $\aa$ on $\Sigma$. Right: the Legendrian realization of $\aa$ produced by the algorithm.}
    \label{fig:example_algorithm_intro}
\end{figure}

Very briefly, the algorithm works as follows. Starting from a Legendrian graph $\G\subset(\R^3,\xist)$, we use the handle decomposition of a ribbon surface $\Sigma$ induced by the vertices and edges of $\G$ to put the curve $\aa$ in a standard position. We then decompose $\aa$ into finitely many segments, each lying in a single handle. For every segment in a $1$--handle we assign a combinatorial quantity, the \emph{relative gain}, which governs the slope of the corresponding Legendrian segment in the front projection. By modifying $\aa$ within its isotopy class on the ribbon, we arrange that these relative gains balance globally, so that the resulting Legendrian segments glue together with matching $z$--coordinates and close up to a Legendrian knot.

Along the way, we also record a uniqueness statement that is implicit in earlier work but, to our knowledge, has not been written down in the form required here: any two Legendrian realizations of the same curve on a ribbon surface are Legendrian isotopic, and similarly for Legendrian knots lying on pages of open books and representing the same isotopy class on the page; see Theorems~\ref{thm:uniqueness_ribbon} and~\ref{thm:uniqueness_OB}. These uniqueness results are of independent interest beyond the algorithm itself, and they will also be used in forthcoming work of the author with collaborators on a contact version of Kirby's theorem~\cite{KegelStenhedeVertesiZuddasKirby}.

\begin{remark}
The results of this article form part of the author's PhD thesis~\cite{StenhedeThesis}.
\end{remark}

\subsection*{Acknowledgements}
I would like to thank Vera Vértesi and Marc Kegel for their support, encouragement, and valuable comments throughout this project. This project was initiated while I was participating in the ICERM semester programme \emph{Braids}, and I am grateful to ICERM for providing a stimulating environment in which this work began. I was supported by the Vienna School of Mathematics and by Vera Vértesi's FWF projects PAT7436924 and P~34318.
\section{Preliminaries}\label{sec:preliminaries}

\subsection{Mapping class groups and Dehn twists}

Let $\Sigma$ be a smooth, compact, oriented surface with non-empty boundary. We denote by $\mathrm{Diff}^+(\Sigma,\partial\Sigma)$ the group of orientation-preserving diffeomorphisms of $\Sigma$ that restrict to the identity on a neighborhood of the boundary, and by $\mathrm{Diff}^+_0(\Sigma,\partial\Sigma)$ the connected component containing the identity. The \emph{mapping class group} $\MCG(\Sigma,\partial \Sigma)$ is the quotient
\[
\MCG(\Sigma,\partial\Sigma)
:=\mathrm{Diff}^+(\Sigma,\partial\Sigma)/ \mathrm{Diff}^+_0(\Sigma,\partial\Sigma).
\]
This group is generated by Dehn twists along simple closed curves. We denote by $\tau_\aa^\pm$ the mapping class represented by a positive or negative Dehn twist along a simple closed curve $\aa$ in $\Sigma$.

\subsection{Contact structures, Legendrian graphs, and projections}

A \emph{contact form} on a smooth, oriented $3$--manifold $M$ is a $1$--form $\alpha\in\Omega^1(M)$ such that
\[
\alpha\wedge d\alpha > 0.
\]
A \emph{contact structure} $\xi$ on $M$ is a $2$--plane field that can be written as the kernel of a contact form, i.e.\ $\xi=\ker\alpha$ for some $\alpha$ as above. On $\R^3$ we consider the standard contact form
\[
\alphast := dz + x\,dy
\]
and the associated standard contact structure
\[
\xist := \ker\alphast.
\]
On $S^3\subset\R^4$ we use the standard contact structure
\[
\xist = \ker\bigl(x_1dy_1 - y_1dx_1 + x_2dy_2 - y_2dx_2\bigr)\big|_{S^3}.
\]
For any point $p\in S^3$ there is a contactomorphism
\[
(S^3\setminus\{p\},\,\xist|_{S^3\setminus\{p\}}) \cong (\R^3,\xist)
\]
(see \cite[Proposition~2.1.8]{Geiges_book}).

Given a contact form $\alpha$, its \emph{Reeb vector field} $R_\alpha$ is uniquely defined by
\[
d\alpha(R_\alpha,-) \equiv 0,
\qquad
\alpha(R_\alpha)\equiv 1.
\]
For the standard form $\alphast$ the Reeb vector field is $\partial_z$. A \emph{contact vector field} $\X$ for a contact structure $\xi$ is a vector field whose flow preserves $\xi$.

Let $(M,\xi)$ be a contact $3$--manifold. A smooth curve
\[
r\colon I \longrightarrow M,
\qquad I\subset \R \text{ or } I=S^1,
\]
is \emph{Legendrian} if $r'(s)\in\xi_{r(s)}$ for all $s\in I$. A properly embedded $1$--dimensional CW-complex whose edges are Legendrian segments and whose vertices have distinct oriented tangencies in the contact planes is called a \emph{Legendrian graph}. Two Legendrian graphs are \emph{Legendrian isotopic} if they are joined by a one-parameter family of Legendrian graphs preserving the cyclic order of the incident edges at every vertex.

In $(\R^3,\xist)$ there are two particularly useful ways to depict Legendrian objects: the \emph{front projection} onto the $(y,z)$--plane and the \emph{Lagrangian projection} onto the $(x,y)$--plane. We adopt the coordinate convention in Figure~\ref{fig:coordinate_system}.

\begin{figure}[htbp]
    \centering
    \begin{overpic}[scale=1]{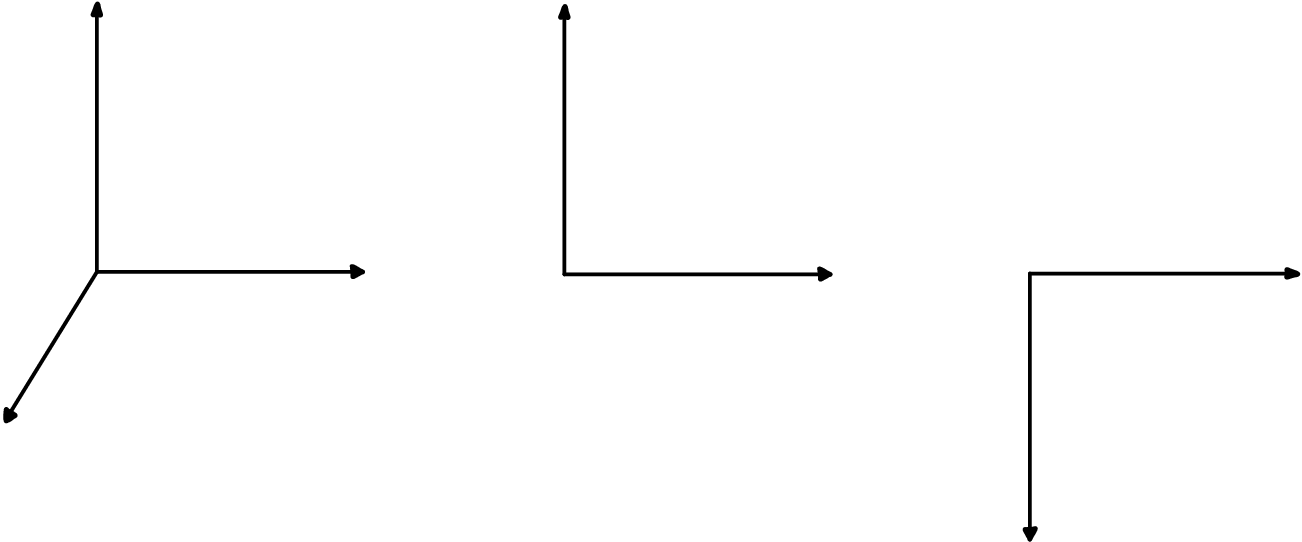}
        \put(-1,11){$x$}
        \put(5,39){$z$}
        \put(25,22){$y$}
        \put(41,39){$z$}
        \put(61,22){$y$}
        \put(77,2){$x$}
        \put(97,22){$y$}
    \end{overpic}
    \caption{Left: coordinates on $\R^3$. Center: the front projection. Right: the Lagrangian projection.}
    \label{fig:coordinate_system}
\end{figure}

If $\star$ is a Legendrian object in $(\R^3,\xist)$, we denote its front projection by $\star_F$ and its Lagrangian projection by $\star_L$.

Let $r=(r_x,r_y,r_z)\colon I\to(\R^3,\xist)$ be a Legendrian immersion. Its front projection is $r_F=(r_y,r_z)$. A point of the image of a generic front projection has one of the local models shown in Figure~\ref{fig:Legendrian_projection}: a regular point, a transverse crossing, a cusp, or a vertex of a Legendrian graph. Conversely, any planar graph in the $(y,z)$--plane whose points have neighborhoods modeled on Figure~\ref{fig:Legendrian_projection}(a)--(e) lifts uniquely to a generic Legendrian graph in $(\R^3,\xist)$.

\begin{figure}[htbp]
    \centering
    \begin{overpic}[scale=1]{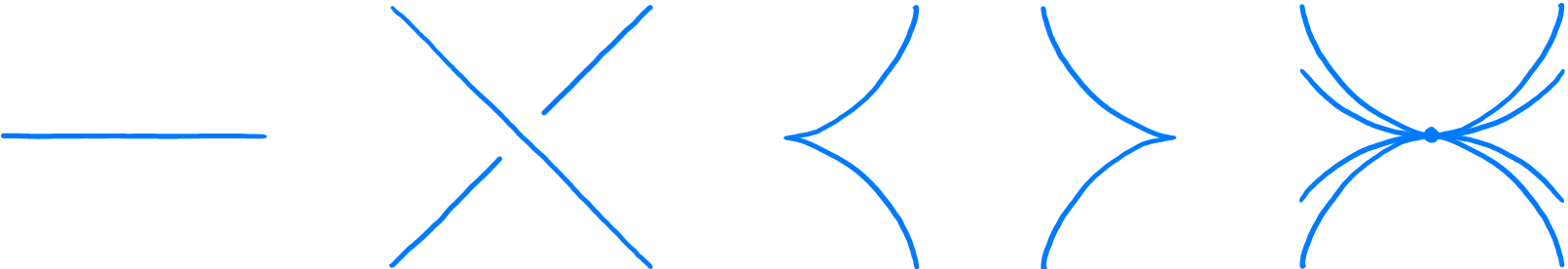}
        \put(-4,15){(a)}
        \put(21,15){(b)}
        \put(47,15){(c)}
        \put(62,15){(d)}
        \put(79,15){(e)}
        \put(83,10){$\boldsymbol\cdot$}
        \put(83,8){$\boldsymbol\cdot$}
        \put(83,6){$\boldsymbol\cdot$}
        \put(99,10){$\boldsymbol\cdot$}
        \put(99,8){$\boldsymbol\cdot$}
        \put(99,6){$\boldsymbol\cdot$}
    \end{overpic}
    \caption{Local models for points in the front projection of a generic Legendrian graph.}
    \label{fig:Legendrian_projection}
\end{figure}

The Lagrangian projection $r_L$ of a Legendrian immersion $r$ is an immersed curve in the $(x,y)$--plane. The $z$--coordinate of $r$ can be recovered from $r_L$ and the $z$--coordinate of a single point $r(s_0)$ via
\[
r_z(s) = r_z(s_0) - \int_{s_0}^s r_x(u)\,r'_y(u)\,du.
\]
In particular, if $r$ is a closed Legendrian curve, then
\[
\oint_{r_L} x\,dy = 0.
\]
Moreover, $r$ is embedded if and only if every loop in $r_L$ (except, in the closed case, the full loop itself) encloses a nonzero oriented area. Figure~\ref{fig:Lagrangian_projection} shows the Lagrangian projections corresponding to the local models in Figure~\ref{fig:Legendrian_projection}.

\begin{figure}[htbp]
    \centering
    \begin{overpic}[scale=1]{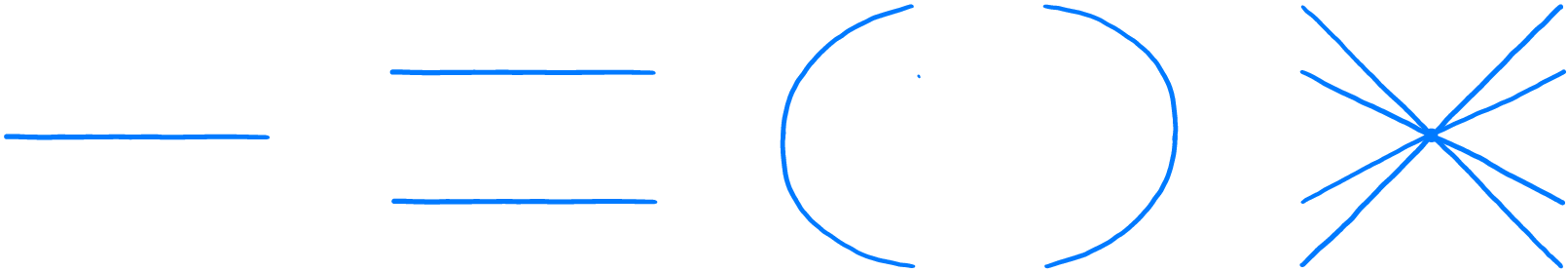}
        \put(-4,15){(a)}
        \put(21,15){(b)}
        \put(47,15){(c)}
        \put(62,15){(d)}
        \put(79,15){(e)}
        \put(83,10){$\boldsymbol\cdot$}
        \put(83,8){$\boldsymbol\cdot$}
        \put(83,6){$\boldsymbol\cdot$}
        \put(99,10){$\boldsymbol\cdot$}
        \put(99,8){$\boldsymbol\cdot$}
        \put(99,6){$\boldsymbol\cdot$}
    \end{overpic}
    \caption{Lagrangian projections corresponding to the front projections in Figure~\ref{fig:Legendrian_projection}. The $y$--coordinate agrees with that of the front projection, and the $x$--coordinate parametrizes the vertical direction (top to bottom is positive).}
    \label{fig:Lagrangian_projection}
\end{figure}

\subsection{Convex surfaces and Legendrian realization}

Given an oriented surface $\S$ in a contact manifold $(M,\xi)$, we denote by $\S_\xi$ its characteristic foliation. We say that $\S$ is \emph{convex} if there exists a contact vector field $\X$ transverse to $\S$. Equivalently, $\S$ admits a vertically invariant neighborhood: there is a neighborhood $\S\times(-\varepsilon,\varepsilon)\subset M$ of $\S\cong \S\times\{0\}$ in which the contact structure is invariant under translation in the second factor. In suitable coordinates the contact structure is then given as the kernel of a $1$--form
\[
\beta + u\,ds,
\]
where $\beta\in\Omega^1(\S)$, $u\in C^\infty(\S)$, and $s\in(-\varepsilon,\varepsilon)$ is the coordinate in the second factor. The \emph{dividing set} $\Gamma_\S$ is the set of points where the transverse contact vector field lies in $\xi$; in the vertically invariant model this is precisely the zero set of $u$.

A singular foliation $\mathcal{F}$ on $\S$ is \emph{divided} by a multicurve $\Gamma$ if $\Gamma$ is transverse to $\mathcal{F}$ and there exist an area form $\omega$ on $\S$ and a vector field $v$ defining $\mathcal{F}$ such that $\mathcal{L}_v\omega\neq 0$ on $\S\setminus\Gamma$, with $v$ pointing out of the positive region and into the negative region along $\Gamma$. If $\S$ is convex, then its characteristic foliation is divided by the dividing set.

The flexibility theorem of Giroux says that, once the dividing set is fixed, the characteristic foliation may be changed arbitrarily inside the class of foliations divided by it.

\begin{theorem}[Giroux's flexibility theorem \cite{Giroux91,Etnyre_VHM11}]\label{thm:Giroux_flexibility}
Let $\S$ be a convex surface in a contact manifold $(M,\xi)$ with respect to a contact vector field $\X$. If $\S$ has boundary, assume that $\partial\S$ coincides with the dividing set $\Gamma_\S$. Let $\mathcal{F}$ be a singular foliation on $\S$ divided by $\Gamma_\S$, and let $\mathcal{N}(\S)$ be any neighborhood of $\S$ of the form $\S\times(-\varepsilon,\varepsilon)$ in $M$. Then there exists an isotopy
\[
\psi_t\colon \S \longrightarrow \mathcal{N}(\S), \qquad t\in[0,1],
\]
of embeddings such that:
\begin{itemize}
    \item $\psi_0$ is the inclusion $\S\hookrightarrow M$;
    \item for all $t\in[0,1]$, the surface $\psi_t(\S)$ is convex with respect to $\X$ and has dividing set $\psi_t(\Gamma_\S)$;
    \item the characteristic foliation $(\psi_1(\S))_\xi$ coincides with $\psi_1(\mathcal{F})$.
\end{itemize}
Moreover, if $\mathcal{F}$ coincides with $\S_\xi$ on a neighborhood of $\Gamma_\S$ in $\S$, then the isotopy may be chosen fixed on a small enough neighborhood of $\Gamma_\S$.
\end{theorem}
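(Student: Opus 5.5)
The plan is to follow Giroux's original strategy. We isotope the surface through convex surfaces, keeping the dividing set fixed, so that the characteristic foliation is pushed from $\S_\xi$ to $\mathcal{F}$. The construction has three steps.

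\textbf{Step 1: local models.} Work in the vertically invariant neighborhood $\S\times(-\varepsilon,\varepsilon)$, where $\xi=\ker(\beta+u\,ds)$ and $\X=\partial_s$. A surface of the form $\S_f=\{s=f(x)\}$, for a function $f$ on $\S$, is transverse to $\partial_s$ and hence convex. Its characteristic foliation is the kernel of $\beta+u\,df$. Its dividing set is still $\{u=0\}=\Gamma_\S$, transported along the graph map. So every such graph is convex with respect to $\X$ and has dividing set $\psi(\Gamma_\S)$, which gives the second bullet automatically for any family of graphs.

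\textbf{Step 2: realize a divided foliation as a graph.} The task is to find $f$, together with a reparametrization by a diffeomorphism of $\S$ isotopic to the identity, such that $\ker(\beta+u\,df)$ defines $\mathcal{F}$. The first sub-step is to show that the set of $1$--forms $\beta'$ for which $\beta'+u\,ds$ is contact (with $u$ fixed) is convex. Contactness reads $u\,d\beta' + \beta'\wedge du>0$ as an area form condition. Following Giroux, we characterize it by the divided condition: away from $\Gamma$ it says $d(\beta'/u)$ has the appropriate sign, and near $\Gamma$ it says $\beta'\wedge du>0$. Then, given $\mathcal{F}$ divided by $\Gamma_\S$ via $(\omega,v)$, we build a form $\beta_1$ defining $\mathcal{F}$, namely $\beta_1=\iota_v\omega$, suitably rescaled. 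We check that $\beta_1+u\,ds$ is contact for a $u'$ with the same zero set and signs as $u$. By Moser-type convexity, $\beta_t=(1-t)\beta+t\beta_1$ together with $u_t$ gives a path of vertically invariant contact structures on $\S\times\R$. Gray's theorem, applied in the $s$-invariant setting, then produces a contact isotopy commuting with $\partial_s$ from $\ker(\beta_1+u\,ds)$ to $\ker(\beta+u\,ds)$. Such an isotopy is a graph-type map $(x,s)\mapsto(\phi_t(x),s+g_t(x))$, and the image of $\S\times\{0\}$ is the required $\psi_t(\S)$. After rescaling $s$, everything can be compressed into any prescribed $\varepsilon$-neighborhood $\mathcal{N}(\S)$.

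\textbf{Step 3: boundary and relative version.} When $\partial\S=\Gamma_\S$, or when $\mathcal{F}=\S_\xi$ near $\Gamma_\S$, we choose $\beta_1=\beta$ near $\Gamma_\S$. Then $\beta_t$ is constant there, and the Gray vector field vanishes on that neighborhood, so the isotopy is fixed there.

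\textbf{Main obstacle.} I expect the main difficulty to be the convexity/interpolation claim in Step 2. Specifically, one must produce the pair $(\beta_1,u_1)$ from the divided-foliation data $(\omega,v)$ so that the linear path stays contact near $\Gamma$, where $u$ changes sign. One must also control the Gray flow so that it stays in $\mathcal{N}(\S)$ and preserves $\partial_s$-invariance. For the first point I would use the normal form $u=\pm1$ away from a collar of $\Gamma$ and $u=t$ (the collar coordinate) on the collar, and do the computation explicitly there.
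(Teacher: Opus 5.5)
Your outline is the standard Giroux--Moser argument; the paper only cites this theorem and gives no proof of its own. The sound parts are these: graphs over $S$ in the invariant neighborhood are convex, the admissible $\beta$'s for fixed $u$ form a convex set, $\beta_1=\iota_v\omega$ comes from the dividing data, and Gray's theorem applied $s$-invariantly gives maps $(x,s)\mapsto(\phi_t(x),s+g_t(x))$.

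\textbf{The gap is the last sentence of Step~2.} Confinement of the isotopy in an arbitrary $\mathcal N(S)=S\times(-\varepsilon,\varepsilon)$ does not follow from ``rescaling $s$''.
\begin{itemize}
\item Substituting $s=\lambda\tilde s$ only rewrites the form as $\beta+\lambda u\,d\tilde s$. The Gray vector field in $\xi_t$ depends only on the path of plane fields, so in the original coordinates you get the same isotopy and the same vertical excursion $\sup|g_t|$.
\item Equivalently, squashing the final graph $\{s=f\}$ to $\{s=\lambda f\}$ changes its characteristic foliation from $\ker(\beta+u\,df)$ to $\ker(\beta+\lambda u\,df)$, which is no longer a copy of $\mathcal F$.
\end{itemize}
Nothing in your argument bounds $g_t$, yet the statement requires $\psi_t(S)\subset\mathcal N(S)$.

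One way to close it is to use that the admissible set is a convex \emph{cone}, and replace $\beta_1$ by $\lambda\beta_1$.
\begin{itemize}
\item With $c_t=1-t+t\lambda$ and $\beta_t=(1-t)\beta+t\lambda\beta_1$, one has $\dot\beta_t=\frac{\lambda-1}{c_t}\beta_t+\frac{\lambda}{c_t}(\beta_1-\beta)$.
\item The Gray equations are pointwise linear in $\dot\beta_t$. For a right-hand side proportional to $\beta_t$, the solution is a multiple of the field $Z$ with $\iota_Z(u\,d\beta_t+\beta_t\wedge du)=u\beta_t$. Since $\beta_t(Z)=0$, this field has no $\partial_s$-component.
\item Write $\beta_t=c_t\hat\beta_\sigma$ with $\sigma=t\lambda/c_t$ and $\hat\beta_\sigma=(1-\sigma)\beta+\sigma\beta_1$. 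This shows the $\partial_s$-component coming from the second term is $\lambda/c_t$ times a function bounded uniformly on $S\times[0,1]$.
\item Integrating, $\sup|g_t|=O(\lambda\log(1/\lambda))$, which is $<\varepsilon$ for $\lambda$ small.
\end{itemize}
In the relative case, rescale instead by a suitable nonincreasing function of $|u|$ that equals $1$ near $\Gamma_S$ and equals $\lambda$ where $\mathcal F\neq S_\xi$. This is why the isotopy is fixed only on a smaller neighborhood of $\Gamma_S$.

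\textbf{Two smaller points.}

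First, in Step~3 you treat $\partial S=\Gamma_S$ like the relative case. If $\mathcal F\neq S_\xi$ near $\partial S$, you cannot take $\beta_1=\beta$ there, since $\beta_1$ must define $\mathcal F$. What actually handles the boundary is the $ds$-component of the Gray equation (with $u$ held fixed), which reads $du(Z)=\mu u$. So the horizontal part $Z$ is tangent to $\Gamma_S$. Hence the flow is complete on $S\times\R$ when $\partial S=\Gamma_S$, and it carries $\Gamma_S$ into $\Gamma_S\times\R$. This is also what gives ``dividing set $\psi_t(\Gamma_S)$''. Your remark in Step~1 that this is automatic for graphs presupposes that the horizontal part preserves $\Gamma_S$.

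Second, the obstacle you single out is not one. The contact condition gives $\beta\wedge du>0$ on $\Gamma_S$, so $u$ and $u'$ vanish transversally exactly on $\Gamma_S$, with the same signs. Then $g=u'/u$ is smooth and positive, and $(\beta_1/g,u)$ still defines $\mathcal F$ with the original $u$. Linearity in $\beta$ then gives the whole path, with no normal form needed.
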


A graph $C\subset\S$ is called \emph{nonisolating} if it is properly embedded, disjoint from $\partial\S$, and transverse to the dividing set when they intersect. In the situation most relevant for us, namely when the dividing set coincides with the boundary, this simply means that $C$ is disjoint from the boundary.

\begin{theorem}[Legendrian realization principle \cite{Honda00,Etnyre04}]\label{thm:Legendrian_realization}
Let $\S$ be a convex surface in a contact manifold $(M,\xi)$ with respect to a contact vector field $\X$. If $\S$ has boundary, assume that $\partial\S$ coincides with the dividing set $\Gamma_\S$. Let $C\subset\S$ be a nonisolating graph, and let $\mathcal{N}(\S)$ be any neighborhood of $\S$ of the form $\S\times(-\varepsilon,\varepsilon)$ in $M$. Then there exists an isotopy
\[
\psi_t\colon \S \longrightarrow \mathcal{N}(\S),\qquad t\in[0,1],
\]
of embeddings such that:
\begin{itemize}
    \item $\psi_0$ is the inclusion $\S\hookrightarrow M$, and $\psi_t$ is fixed on a neighborhood of $\Gamma_\S$;
    \item for all $t\in[0,1]$, the surface $\psi_t(\S)$ is convex with respect to $\X$ and has dividing set $\Gamma_{\psi_t(\S)}=\Gamma_\S$;
    \item the graph $\psi_1(C)\subset \psi_1(\S)$ is Legendrian.
\end{itemize}
\end{theorem}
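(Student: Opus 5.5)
The plan is to derive the statement from Giroux's flexibility theorem (Theorem~\ref{thm:Giroux_flexibility}). I will construct a singular foliation $\mathcal{F}$ on $\S$ that is divided by $\Gamma_\S=\partial\S$, contains $C$ as a union of leaves and singular points, and agrees with $\S_\xi$ near $\partial\S$. Applying the flexibility theorem to $\mathcal{F}$ then gives an isotopy $\psi_t$ through convex surfaces with dividing set $\psi_t(\Gamma_\S)=\Gamma_\S$ (the isotopy is fixed near $\Gamma_\S$), and with $(\psi_1(\S))_\xi=\psi_1(\mathcal{F})$. Since $\psi_1(C)$ is then tangent to the characteristic foliation, it is Legendrian.

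Because $\Gamma_\S=\partial\S$, the positive region is all of $\operatorname{int}\S$ (after orienting). So being divided means: there are an area form $\omega$ and a vector field $v$ directing $\mathcal{F}$ with $\mathcal{L}_v\omega>0$ in the interior and $v$ pointing outward along $\partial\S$. In other words, $v$ is a Liouville-type expanding field that is outward at the boundary. I build $v$ in three steps.
\begin{enumerate}[label=(\arabic*)]
\item Near $C$: if some component of $C$ has no vertex, insert one. Make every vertex a source. In the middle of each edge, place a hyperbolic (saddle) singularity whose stable separatrices are the two halves of the edge. Then $C$ is a union of flow lines and singular points. Sources and saddles admit local expanding models (for instance $v=x\partial_x+y\partial_y$ and $v=2x\partial_x-y\partial_y$ with the standard $\omega$), so these patch to an expanding field on a ribbon neighborhood $N(C)$ that is transverse to $\partial N(C)$ and points outward.
\item Near $\partial\S$: take $v$ to direct the actual characteristic foliation of $\S$ on a collar of $\partial\S$. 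Since $\S$ is convex with $\Gamma_\S=\partial\S$, this foliation is transverse to the boundary and expanding there. Because $C$ is disjoint from $\partial\S$, the collar can be chosen disjoint from $N(C)$.
\item On $W=\S\setminus(N(C)\cup\text{collar})$: this is a cobordism from $\partial N(C)$ (incoming) to the inner collar boundary (outgoing). The nonisolating hypothesis says every component of $\S\setminus C$ meets $\partial\S$, so every component of $W$ has nonempty outgoing boundary. Hence $W$ admits a Morse function, with the incoming boundary as minimum level and the outgoing boundary as maximum level, that has only index $0$ and $1$ critical points and no interior maxima. I take $v$ gradient-like for this function, using standard expanding models at the critical points.
\end{enumerate}

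The main obstacle is gluing these pieces so that one global area form $\omega$ satisfies $\mathcal{L}_v\omega>0$ everywhere, not just piece by piece. My approach is the standard Weinstein-type argument. Build $\omega=d\lambda$ handle by handle with $\lambda=\iota_v\omega$ a Liouville form, extending it across each $0$- and $1$-handle of $W$ in increasing order of the Morse function. At each gluing region, rescale $\omega$ by a factor $e^{g}$, where $g$ is a function increasing along the flow; this is always possible because $v$ is gradient-like and has no closed orbits. Equivalently, one can use the criterion that a nonsingular-at-boundary, Morse–Smale flow with no sinks, no closed orbits, and exiting transversely through $\partial\S$ is divided by $\partial\S$. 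This is exactly where the absence of index-$2$ points, i.e.\ the nonisolating condition, is essential. Finally, since $\mathcal{F}=\S_\xi$ on the collar, the ``moreover'' clause of Theorem~\ref{thm:Giroux_flexibility} keeps the isotopy fixed near $\Gamma_\S$.
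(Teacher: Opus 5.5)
The paper gives no proof of this theorem; it quotes it from Honda and Etnyre. Your reduction to Theorem~\ref{thm:Giroux_flexibility} is exactly the argument in those sources: build a foliation divided by $\Gamma_S$ that contains $C$ as leaves and singular points and agrees with $S_\xi$ near $\Gamma_S$. When $\partial S=\Gamma_S$ your construction is sound. Radial sources at the vertices and positive saddles at the edge midpoints make $\psi_1(C)$ a genuine Legendrian graph. The Morse function on $W$ with no index-$2$ points is precisely where the nonisolating hypothesis enters.

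The global area form is easier to get in one stroke than handle by handle. Let $f$ be a global Lyapunov function for $v$, glued from the pieces along level sets. Then $\operatorname{div}_{e^{Kf}\omega_0}v=\operatorname{div}_{\omega_0}v+K\,v(f)$. Near the zeros the first term is positive, because the trace of the linearization at a zero does not depend on the area form and is positive in your models. Away from the zeros the second term dominates once $K$ is large.

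You also rightly use the standard nonisolating condition, that every component of $S\setminus C$ meets $\Gamma_S$. The definition written in the paper omits it, and without it the statement is false by Lemma~\ref{lem:inverse_Leg_real}.

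The gap is that you prove only part of the statement. The hypothesis ``if $S$ has boundary, assume $\partial S=\Gamma_S$'' means closed convex surfaces are included; this is also why the definition of nonisolating mentions transversality to $\Gamma_S$. For closed $S$, $\Gamma_S$ is a nonempty multicurve in the interior, $S\setminus\Gamma_S=S_+\sqcup S_-$, and $C$ may cross $\Gamma_S$. Your opening reduction, ``the positive region is all of $\mathrm{int}\,S$'', is then unavailable. To cover this case you need to do the following.
\begin{itemize}
\item Run your construction on $S_+$ with an expanding field pointing outward along $\Gamma_S$.
\item Run the dual construction on $S_-$ with a contracting field: sinks at the vertices, and negative saddles whose unstable separatrices are the edge halves.
\item Glue the two on a product neighborhood $\Gamma_S\times(-\delta,\delta)$ where $v=\partial_t$ and $C\cap(\Gamma_S\times(-\delta,\delta))$ is a union of segments $\{p\}\times(-\delta,\delta)$.
\item Realize arcs of $C\cap S_+$ (resp.\ $C\cap S_-$) with both endpoints on $\Gamma_S$ as the unstable (resp.\ stable) manifold of a saddle, or through an inserted source (resp.\ sink).
\item Use the nonisolating condition to show that every component of $S_\pm\setminus C$ has an arc of $\Gamma_S$ in its frontier. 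Then no index-$2$ points are needed in $S_+$ and no index-$0$ points in $S_-$.
\item Take the area form $e^{g}\omega_0$ with $g$ increasing along $v$ on $S_+$, decreasing on $S_-$, and maximal along $\Gamma_S$.
\end{itemize}
This is routine, and the paper only ever uses the bordered case. Still, as written your argument does not establish the closed case of the statement.
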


We will also use the surface neighborhood theorem in the following form.

\begin{theorem}[Giroux's surface neighborhood theorem \cite{Giroux91}]\label{thm:neigh_thm}
Let $S_i$ be compact oriented surfaces embedded in contact $3$--manifolds $(M_i,\xi_i)$, for $i=0,1$, and let
\[
\phi: S_0 \to S_1
\]
be a diffeomorphism which identifies the characteristic foliations induced by $\xi_0$ and $\xi_1$ (with orientation). Then there is a contactomorphism of neighborhoods
\[
\psi : \mathcal{N}(S_0) \to \mathcal{N}(S_1),
\]
sending $S_0$ to $S_1$, and such that on $S_0$ the restriction $\psi|_{S_0}$ is isotopic to $\phi$ via an isotopy preserving the characteristic foliation.
\end{theorem}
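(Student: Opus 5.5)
The plan is the standard Moser-type argument; this is Giroux's result, so I only sketch how I would reprove it.

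\emph{Step 1: reduce to one manifold with two contact forms.} Choose a tubular neighborhood $S_0\times(-\varepsilon,\varepsilon)\subset M_0$ and a tubular neighborhood of $S_1$ in $M_1$, with orientations chosen compatibly, and extend $\phi$ to a diffeomorphism $\Phi\colon\mathcal N(S_0)\to\mathcal N(S_1)$. Pulling back, we get two contact forms on $\mathcal N(S_0)$:
\begin{itemize}
\item $\alpha_0$, a contact form for $\xi_0$;
\item $\alpha_1$, a form with $\Phi^*\xi_1=\ker\alpha_1$.
\end{itemize}
The hypothesis that $\phi$ identifies the oriented characteristic foliations says that $\beta_i:=\alpha_i|_{TS_0}$ satisfy $\beta_1=f\beta_0$ for a smooth positive function $f$ on $S_0$. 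Here I use the convention that an oriented singular foliation is a $1$-form up to positive smooth multiple. Extending $f$ to a positive function near $S_0$ and replacing $\alpha_1$ by $f^{-1}\alpha_1$, we may assume $\alpha_0|_{TS_0}=\alpha_1|_{TS_0}=\beta$.

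\emph{Step 2: the linear interpolation is contact near $S_0$.} Set $\alpha_t=(1-t)\alpha_0+t\alpha_1$. Write $\alpha_i=\beta_{i,s}+u_{i,s}\,ds$ in the coordinate $s$, with $\beta_{i,0}=\beta$. Along $s=0$ a direct computation gives
\[
\alpha_i\wedge d\alpha_i\big|_{S_0}=ds\wedge\bigl(u_i\,d\beta-\beta\wedge du_i-\beta\wedge\dot\beta_i\bigr)\quad\text{(up to a fixed sign convention)},
\]
where $u_i=u_{i,0}$ and $\dot\beta_i=\partial_s\beta_{i,s}|_{s=0}$. The right-hand side is affine in the data $(u_i,\dot\beta_i)$ once $\beta$ is fixed. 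Hence $\alpha_t\wedge d\alpha_t|_{S_0}$ is the convex combination of two positive forms and is positive. By compactness of $S_0\times[0,1]$, every $\alpha_t$ is contact on a uniform neighborhood of $S_0$.

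\emph{Step 3: Moser's trick.} Look for $X_t\in\ker\alpha_t$ with $\mathcal L_{X_t}\alpha_t+\dot\alpha_t=h_t\alpha_t$. This is solved by
\begin{itemize}
\item $\iota_{X_t}d\alpha_t|_{\ker\alpha_t}=-\dot\alpha_t|_{\ker\alpha_t}$, and
\item $h_t=\dot\alpha_t(R_{\alpha_t})$.
\end{itemize}
The key point is that $\dot\alpha_t=\alpha_1-\alpha_0$ vanishes on $TS_0$. There are two cases at a point $p\in S_0$.
\begin{itemize}
\item If $p$ is a singular point, then $\ker\alpha_t=T_pS_0$, so $\dot\alpha_t|_{\ker\alpha_t}=0$ and $X_t(p)=0$.
\item Otherwise $\ell=\ker\alpha_t\cap T_pS_0$ is a line and $\dot\alpha_t|_{\ker\alpha_t}$ annihilates $\ell$. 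The $d\alpha_t$-orthogonal of $\ell$ in the $2$-dimensional symplectic plane $\ker\alpha_t$ is $\ell$ itself, so $X_t(p)\in\ell\subset T_pS_0$.
\end{itemize}
Thus $X_t$ is tangent to $S_0$, and its flow $\psi_t$ exists on a smaller neighborhood and preserves $S_0$. It satisfies $\psi_t^*\alpha_t=g_t\alpha_0$, so $\Phi\circ\psi_1$ is the desired contactomorphism $\mathcal N(S_0)\to\mathcal N(S_1)$ sending $S_0$ to $S_1$.

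On $S_0$ the flow of $X_t$ runs along leaves of the common characteristic foliation $\ker\beta$ and fixes its singular points. Hence $\psi_t|_{S_0}$ is an isotopy preserving the characteristic foliation, and $(\Phi\circ\psi_1)|_{S_0}=\phi\circ\psi_1|_{S_0}$ is isotopic to $\phi$ through such maps.

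\emph{Main obstacle.} I expect the delicate point to be Step 1 at the singular points of the foliation. There one needs $\beta_1=f\beta_0$ with $f$ smooth and positive, not merely positive away from the zeros. I would handle this by taking it as part of what "identifies the characteristic foliations" means, as in Giroux's convention. Alternatively, one can check it directly using that generic singularities of characteristic foliations are nondegenerate zeros of $\beta$, where the ratio of two $1$-forms with the same kernel line field extends smoothly. The remaining work is the linearity computation in Step 2.
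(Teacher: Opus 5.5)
The paper gives no proof of this statement; it is quoted from \cite{Giroux91}. Your argument is the standard Moser proof, and for closed $S_0$ it is correct. The reduction in Step~1 works, and so does the linearity observation in Step~2. (The correct expression there is $ds\wedge(u_i\,d\beta+\beta\wedge du_i-\beta\wedge\dot\beta_i)$, with a plus sign on $\beta\wedge du_i$, but you only use linearity in $(u_i,\dot\beta_i)$.) The check that $X_t$ lies in the characteristic line field is also right.

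\textbf{The gap is the case $\partial S_0\neq\emptyset$.} The statement allows this case, and it is exactly the case the paper needs: in the proof of Lemma~\ref{lem:ribbon_convex} the theorem is applied to a ribbon surface, whose boundary is positively transverse.

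\emph{What fails.} Your claim that the flow of $X_t$ exists near $S_0$ and preserves $S_0$ (because $X_t$ is tangent to it) breaks down. $X_t$ lies in the characteristic line field, which is transverse to a transverse boundary, and $X_t$ is generally nonzero there. For instance, take the unit disk in $\{z=0\}$ with $\alpha_0=dz+\tfrac12(x\,dy-y\,dx)$ and $\alpha_1=(1+\tfrac x2)\,dz+\tfrac12(x\,dy-y\,dx)$. One computes $X_t=\tfrac{1}{4+t}\,\partial_x$ at $(1,0,0)$. So the flow pushes $\partial S_0$ off $S_0$, and $\Phi\circ\psi_1$ no longer maps $S_0$ onto $S_1$. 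With Legendrian boundary your argument would survive, since then the line field is tangent to $\partial S_0$. The singular points you flag as the main obstacle are in fact already handled by your convention.

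\emph{The repair.} Use the freedom in Moser's equation. For any function $H$ there is a unique $Z_t$ with $\alpha_t(Z_t)=H$ and $\mathcal L_{Z_t}\alpha_t+\dot\alpha_t=\mu_t\alpha_t$. Namely, $Z_t=H\,R_{\alpha_t}+Z_t^{\xi}$, where $Z_t^{\xi}\in\ker\alpha_t$ and $\iota_{Z_t^{\xi}}d\alpha_t=-(dH+\dot\alpha_t)$ on $\ker\alpha_t$.
\begin{enumerate}
\item Since $\dot\alpha_t=\alpha_1-\alpha_0$ annihilates $TS_0$, along $S_0$ it equals $w\,ds$ with $w=(\alpha_1-\alpha_0)(\partial_s)$.
\item Take $H=-s\,w$, with $w$ extended independently of $s$. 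Then $H=0$ and $dH+\dot\alpha_t=0$ along $S_0$, so $Z_t\equiv 0$ on $S_0$.
\item The flow of $Z_t$ therefore fixes $S_0$ pointwise. By compactness it exists on a neighborhood of $S_0$ for $t\in[0,1]$, whether or not $S_0$ has boundary.
\item Then $\Phi\circ\psi_1$ is a contactomorphism whose restriction to $S_0$ is exactly $\phi$, which is stronger than the stated conclusion.
\end{enumerate}
In Step~1 you should also extend $S_i$ and $\phi$ slightly past the boundary before choosing tubular neighborhoods.
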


\subsection{Contact surgery and open books}

Let $L$ be a Legendrian knot in a contact manifold $(M,\xi)$. A standard neighborhood $\mathcal{N}(L)$ of $L$ is a solid torus whose boundary is a convex surface. The slope of the dividing set on $\partial\mathcal{N}(L)$ determines the \emph{contact framing} of $L$. Removing the interior of $\mathcal{N}(L)$ and gluing back a solid torus by $(\pm 1/n)$--surgery with respect to the contact framing produces, in a canonical way, a new contact manifold; this is called \emph{contact $(\pm 1/n)$--surgery} on $L$. In this article we only consider contact $(\pm 1)$--surgery. If $L$ is equipped with such a surgery coefficient, we write $L^\pm$.

An \emph{open book} on a closed oriented $3$--manifold $M$ is a pair $(B,\pi)$ where $B\subset M$ is a link, called the \emph{binding}, and $\pi\colon M\setminus B\to S^1$ is a fibration which coincides with the angular coordinate on a neighborhood $B\times D^2$ of the binding. The closures of the fibers of $\pi$ are the \emph{pages}. An \emph{abstract open book} is a pair $(\Sigma,\phi)$ where $\Sigma$ is a compact oriented surface with non-empty boundary and $\phi\in\mathrm{Diff}^+(\Sigma,\partial\Sigma)$ is the identity near the boundary. Equivalent abstract open books correspond to diffeomorphic open books.

We say that a contact structure $\xi$ on $M$ is \emph{compatible with} (or \emph{supported by}) an open book $(B,\pi)$ if there exists a contact form $\alpha$ for $\xi$ such that the binding is positively transverse and $d\alpha$ restricts to a positive area form on the interior of every page. The fundamental correspondence due to Giroux can be stated as follows.

\begin{theorem}[Giroux's correspondence \cite{Giroux02,BHH24,LicVer1_24,LicVer2_24}]\label{thm:Giroux_correspondence}
Let $M$ be a closed, oriented $3$--manifold. There is a one-to-one correspondence between positive cooriented contact structures on $M$ up to isotopy and open books on $M$ up to isotopy and positive stabilization.
\end{theorem}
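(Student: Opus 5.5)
The plan is to build the correspondence as a map from open books to contact structures, and then to prove surjectivity and injectivity separately.

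\textbf{The map.} Given an open book $(B,\pi)$ on $M$, I would first produce a supported contact structure by the Thurston--Winkelnkemper construction. Write $M$ as the mapping torus of the monodromy $\phi$ on a page $\Sigma$, glued to $B\times D^2$.
\begin{itemize}
    \item On the mapping torus, choose a $1$--form $\lambda$ on $\Sigma$ with $d\lambda>0$ and $\lambda=e^{t}d\theta$ near $\partial\Sigma$.
    \item Interpolate $\lambda$ and $\phi^*\lambda$ along the $S^1$--direction, and add a large multiple $K\,d\pi$. For $K\gg0$ this is a contact form.
    \item Extend it over $B\times D^2$ by a form $h_1(r)\,d\theta+h_2(r)\,d\varphi$, with $(h_1,h_2)$ chosen so the form stays contact and $B$ is positively transverse.
\end{itemize}
Next I would show that any two supported contact structures are isotopic. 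The set of contact forms supported by a fixed open book is convex in the appropriate sense: the interpolation $(1-s)\alpha_0+s\alpha_1$, after adding $K\,d\pi$ away from the binding, stays contact. Gray's theorem then gives an isotopy.

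I would also check that a positive stabilization does not change the isotopy class. It corresponds to a contact connected sum with $(S^3,\xist)$ carrying the Hopf-band open book, and the latter supports $\xist$. Together these steps give a well-defined map from open books modulo positive stabilization to contact structures up to isotopy.

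\textbf{Surjectivity.} Given $(M,\xi)$, I would take a contact cell decomposition: a Legendrian graph $G$ whose complement is a union of $3$--cells, each with tight convex boundary and $\mathrm{tb}=-1$ boundary on the $2$--cells. A ribbon $\Sigma$ of $G$, i.e. a thickening tangent to $\xi$ along $G$, is then a page of an open book. Its complement is a handlebody, namely a neighborhood of a dual graph, and the Legendrian condition on the $2$--cells makes that complement a product $\Sigma\times[0,1]$. One then checks directly that $\xi$ is supported by this open book.

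\textbf{Injectivity (main obstacle).} Two open books supporting isotopic contact structures must become isotopic after positive stabilizations. First reduce to a single $\xi$ by isotopy. Next show that every supported open book arises from some contact cell decomposition, up to positive stabilization. Then relate any two contact cell decompositions by subdivisions:
\begin{itemize}
    \item adding a Legendrian edge, or subdividing a $2$--cell along a Legendrian arc;
    \item each such move changes the ribbon by a positive stabilization.
\end{itemize}
The hard part is passing to a common refinement. One must make the union of the two graphs Legendrian and generic while keeping the cell conditions, which needs Legendrian realization (Theorem~\ref{thm:Legendrian_realization}) and convex-surface flexibility (Theorem~\ref{thm:Giroux_flexibility}) to control the $2$--cells. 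This is where the original argument had gaps, so for this step I would first try to follow the careful treatment in \cite{BHH24,LicVer1_24,LicVer2_24}, which uses a parametric or bypass analysis of the $3$--cells, rather than reprove it here.
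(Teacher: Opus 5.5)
\textbf{Assessment: the proposal does not prove injectivity.} The paper gives no proof of this statement. It is quoted as a black box from \cite{Giroux02,BHH24,LicVer1_24,LicVer2_24}, so your proposal can only be judged as a self-contained argument.

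The half you actually argue is standard and sound in outline:
\begin{itemize}
\item Thurston--Winkelnkemper gives existence.
\item Connectedness of the space of supported forms plus Gray's theorem gives uniqueness. The term you add must be $R\,f(r)\,d\varphi$ with a cutoff $f=r^2$ near $B$ and $f'\geq 0$, since $d\pi$ is singular along $B$.
\item Invariance under positive stabilization follows from the Hopf-band Murasugi sum.
\item Surjectivity comes from contact cell decompositions, which is exactly Theorem~\ref{thm:contact_cell}.
\end{itemize}
Two inputs are hidden in the surjectivity step. First, you need the existence of a contact cell decomposition: Legendrian-approximate the $1$--skeleton of a fine triangulation, then stabilize and subdivide until every $2$--cell has $tb=-1$ and every $3$--cell lies in a Darboux ball. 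Second, ``one checks directly that $\xi$ is supported'' really means the following. Cut the complement of the ribbon along the $tb=-1$ disks into balls, and apply Eliashberg's uniqueness of tight structures on $B^3$ rel boundary. So the condition you need is that $\xi$ is tight on each $3$--cell, not that its boundary is tight.

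\textbf{The gap: injectivity.} This direction is what the recent references were written to establish. What you outline is Giroux's original strategy, and none of its ingredients is proved in the proposal:
\begin{itemize}
\item every supporting open book comes from a contact cell decomposition;
\item each elementary subdivision changes the associated open book by a positive stabilization, or not at all;
\item above all, two contact cell decompositions of the same $(M,\xi)$ have a common subdivision.
\end{itemize}
The third step is exactly where the original argument was incomplete. Putting two Legendrian $1$--skeleta in general position while keeping every $2$--cell at $tb=-1$ and every $3$--cell tight is not a routine consequence of Theorems~\ref{thm:Legendrian_realization} and~\ref{thm:Giroux_flexibility}.

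Deferring to \cite{BHH24,LicVer1_24,LicVer2_24} does not close this hole inside your framework. Those works do not supply a common-refinement lemma for contact cell decompositions that you could insert at this point. To my understanding they reach uniqueness by a different mechanism: analysing one-parameter families of convex surfaces via bypasses, and via contact Heegaard splittings. Following them therefore means discarding your injectivity argument and adopting theirs wholesale. In the end your proposal, like the paper, obtains the hard direction only by citation, not by the argument you sketch.
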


The version in terms of abstract open books identifies contact $3$--manifolds up to contactomorphism with abstract open books up to equivalence and positive stabilization.

The next theorem will only be used in the final application. Once ribbon surfaces have been introduced in Section~3, it says that suitable ribbons arise as pages of supporting open books.

\begin{theorem}[Giroux \cite{Giroux02}]\label{thm:contact_cell}
Let $G$ be the $1$--skeleton of a contact cell decomposition of a contact $3$--manifold $(M,\xi)$. Then any ribbon surface $R_G$ of $G$ is a page of an open book compatible with $\xi$.
\end{theorem}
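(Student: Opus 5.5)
The plan is to build the open book directly from a thickening of the ribbon and then identify the complement using the $2$-- and $3$--cells. Recall that in a contact cell decomposition the $1$--skeleton $G$ is a Legendrian graph. Each $2$--cell $D$ has Legendrian boundary with twisting number $\mathrm{tb}(\partial D)=-1$ relative to $D$, and each $3$--cell is a ball on which $\xi$ is tight, i.e.\ standard by Eliashberg's uniqueness of tight balls.

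\textbf{Step 1: the thickened ribbon.} Let $R_G$ be a ribbon of $G$. It is tangent to $\xi$ along $G$, and its characteristic foliation retracts it onto $G$, exiting transversely through $\partial R_G$. Then $R_G$ is convex and has a vertically invariant neighborhood $N\cong R_G\times[-1,1]$ in which $\xi=\ker(\beta+u\,ds)$. After rounding corners, $\partial N$ is a convex surface whose dividing set is $\partial R_G\times\{0\}$. On $N$ we take the obvious product structure: $\partial R_G$ is the (future) binding, and $R_G\times\{\pm1\}$ are two pages. We check, using the standard local model for $\beta$ near $G$ and near $\partial R_G$, that a form of the type $\beta+u\,ds$ can be modified so that $d\alpha$ is positive on the slices $R_G\times\{s\}$ and $\partial R_G$ is positively transverse. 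This is a local computation in the ribbon model.

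\textbf{Step 2: the $2$--cells cut the complement into balls.} For each $2$--cell $D$, the condition $\mathrm{tb}(\partial D)=-1$ means that the contact framing of $\partial D$ differs from the framing induced by $R_G$ by the appropriate amount. This should let us isotope $D$ rel $\partial D$ so that $D':=D\setminus\mathrm{int}\,N$ is a convex disk whose Legendrian boundary meets the dividing curve of $\partial N$ exactly twice. Hence $\partial D'$ consists of one arc on $R_G\times\{1\}$ and one arc on $R_G\times\{-1\}$, joined across the binding region. The dividing set of $D'$ is a single arc, so a neighborhood of $D'$ is a standard product $[0,1]\times D'$ compatible with the page structure: it is a $1$--handle attached to the pages. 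Cutting $M\setminus\mathrm{int}\,N$ along all the $D'$ leaves pieces that correspond to the $3$--cells. Each piece is a tight $3$--ball whose convex boundary, after edge rounding, has connected dividing set.

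\textbf{Step 3: gluing.} Each such ball $B$ is identified with the standard ball $R\times[0,1]$ that has pages $R\times\{t\}$, where $R$ is the disk region cut out on the pages. The identification uses Theorem~\ref{thm:Giroux_flexibility} to match characteristic foliations on $\partial B$, Theorem~\ref{thm:neigh_thm}, and Eliashberg's uniqueness of tight balls (the dividing set is connected). Assembling these pieces shows that $M\setminus\mathrm{int}\,N\cong R_G\times[0,1]$, with the contact structure isotopic rel boundary to one supported by the product fibration. Combined with Step 1, this gives an open book with binding $\partial R_G$ and page $R_G$ that supports a contact structure isotopic to $\xi$. Pushing the isotopy back, we conclude that $R_G$ itself is a page of an open book compatible with $\xi$.

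\textbf{Main obstacle.} I expect Step 2 to be the hardest. It requires translating $\mathrm{tb}(\partial D)=-1$ into the statement that $D'$ crosses the binding exactly twice, and verifying that the resulting pieces really are products over page regions, so that the pages extend across the complement. If the direct convex-surface argument is delicate, I would first try a model computation: take a standard neighborhood of $\partial D$ inside the ribbon model, and show there that a $\mathrm{tb}=-1$ unknot bounding $D$ meets $\partial R_G$-level sets as required.
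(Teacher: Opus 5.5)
The paper does not prove this statement; it is quoted from Giroux. Your plan is the standard argument, as in \cite{Etnyre06}: thicken $R_G$ to a handlebody $N$ with convex boundary and $\Gamma_{\partial N}=\partial R_G$, cut $M':=M\setminus\mathrm{int}\,N$ along the truncated $2$--cells $D'$ into tight balls with connected dividing set, and finish with Eliashberg's uniqueness theorem. It is correct in outline, but one sentence of Step~2 is wrong and Step~3 needs reordering.

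\textbf{Step 2.} The framing induced by $R_G$ along $G$ does not differ from the contact framing; it \emph{equals} it. Both $\xi$ and $TR_G$ are transverse to the Reeb field $R_\alpha$, so projecting along $R_\alpha$ identifies their normals to $G$ without twisting. This does not require $R_G$ to be tangent to $\xi$ along $G$, which in fact fails for the generic ribbons of the paper.

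So $\mathrm{tw}(\partial D,D)=-1$ says that the $D$--framing differs from the ribbon framing by one full twist. Near $\partial D$, the curve $D\cap\partial N$ is the $D$--framed push-off, while $\Gamma_{\partial N}=\partial R_G$ consists of the two ribbon-framed push-offs. After an isotopy they therefore meet in exactly $2\lvert\mathrm{tw}(\partial D,D)\rvert=2$ points. This settles your ``main obstacle'' without any model computation.

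Making $\partial D'$ Legendrian on $\partial N$ then gives $\mathrm{tw}(\partial D',D')=\mathrm{tw}(\partial D',\partial N)=-1$. Giroux's criterion (the balls lie in tight $3$--cells) rules out closed components of $\Gamma_{D'}$ and disconnected dividing sets on the balls.

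\textbf{Step 3.} Identifying each ball with a model separately (via flexibility on its own $\partial B$) and then assembling is not coherent as stated. Adjacent balls share the faces $D'$, and independent identifications need not agree there, either as contact germs or as fibrations. Do it in this order instead:
\begin{enumerate}
\item Fix the topology first. The disk decomposition shows that $M'$, with suture $\partial R_G$, is a product $R_G\times[0,1]$. Its two ends are identified with $R_G\times\{1\}$ and $R_G\times\{-1\}$, and the mismatch between these identifications is the monodromy.
\item This gives an open book on $M$ with page $R_G$. Since the slices of $N$ have distinct boundaries, you must collapse the vertical annulus $\partial R_G\times[-1,1]$ of $N$ to the binding.
\item Take $\xi'$ supported by this open book with $\xi'=\xi$ on $N$.
\item Isotope $\xi'$ to $\xi$ relative to $\partial M'$. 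First do this near the disks $D'$, by flexibility, since both structures induce a single dividing arc with the same endpoints. Then do it on each ball, by Eliashberg, using that $\xi'$ is tight on the product balls.
\end{enumerate}
This isotopy fixes $N\supset R_G$, so transporting the open book along it gives the statement.
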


Finally, we recall the effect of contact surgery on knots sitting in pages of an open book.

\begin{proposition}[\cite{MR1919715,Etnyre06}]\label{prop:Dehn_surgery_Dehn_twist}
Let
\[
  \boldsymbol{L}
  =
  \bigcup_{i=1}^n L_i^{\delta_i},
  \qquad
  \delta_i\in\{+,-\},
\]
be a contact surgery link whose components are embedded in different pages of an open book $(B,\pi)$ supporting a contact structure $\xi$ in a manifold $M$. Let $(\Sigma,h)$ be the corresponding abstract open book, where $\Sigma$ is a page chosen just before the page containing $L_1$. Then the contact manifold obtained by contact surgery on $\boldsymbol{L}$ is contactomorphic to the contact manifold supported by the abstract open book $(\Sigma, h\circ\tau_{\scriptscriptstyle{\boldsymbol{L}}})$, where
\[
    \tau_{\scriptscriptstyle{\boldsymbol{L}}}
    :=
    \tau_{\scriptscriptstyle{\overline{L_n}}}^{-\delta_n}
    \cdots
    \tau_{\scriptscriptstyle{\overline{L_1}}}^{-\delta_1}
    \in \MCG(\Sigma,\partial\Sigma)
\]
and $\overline{L_i}$ denotes the isotopy class on the page represented by $L_i$.
\end{proposition}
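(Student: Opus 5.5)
The plan is to reduce the statement to a single component and then handle the topological and contact parts separately. Since the $L_i$ lie in different pages, they have disjoint product neighborhoods of the form $\mathcal{N}(L_i)\subset \Sigma_{\theta_i}\times[\theta_i-\epsilon,\theta_i+\epsilon]$ in the mapping torus part of $M\setminus B$. The surgeries can therefore be performed one at a time, in the order of increasing page parameter starting just after $\Sigma$. Suppose the claim holds for one knot. Then each successive surgery post-composes the current monodromy by the next twist. Because the monodromy is read off by flowing once around starting at $\Sigma$, which lies just before $L_1$, the twists accumulate in the order $\tau_{\overline{L_n}}^{-\delta_n}\cdots\tau_{\overline{L_1}}^{-\delta_1}$, composed with $h$ in the stated way. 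The only care needed here is the convention that functions compose right to left, and I would check it explicitly.

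\textbf{Framing comparison.} For a single Legendrian knot $L$ in a page $\Sigma_\theta$, I first show that the contact framing of $L$ equals the framing induced by the page. Let $\alpha$ be a contact form adapted to the open book, so that $d\alpha$ is positive on the interior of each page. Then $\alpha|_{\Sigma_\theta}$ is a Liouville-type form, and $L$ being Legendrian means $\alpha(TL)=0$. The Reeb field $R_\alpha$ is transverse to the pages. Pushing $L$ along $R_\alpha$ gives the contact framing, while pushing along a vector field tangent to $\Sigma_\theta$ and normal to $L$ gives the page framing. Both push-offs are nowhere tangent to $L$, and they can be connected without passing through $TL$, because $\xi$ and $T\Sigma_\theta$ intersect along $TL$ only. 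Hence the two framings agree. Consequently, contact $(\pm1)$-surgery is smooth $(\pm1)$-surgery with respect to the page framing.

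\textbf{Topological part.} Smooth $(\pm1)$-surgery on a curve $L$ in a page, with respect to the page framing, is the classical operation that cuts $M$ along $\Sigma_\theta$ and reglues by $\tau_{\overline{L}}^{\mp1}$. To see this, identify a neighborhood of $L$ with an annulus neighborhood $A\subset\Sigma_\theta$ times an interval. Removing the solid torus and regluing with the $\pm1$ slope is equivalent to a $\mp$ Dehn twist supported in $A\times\{\theta+\epsilon\}$. This gives the open book $(\Sigma,h\circ\tau_{\overline{L}}^{-\delta})$ on the surgered manifold.

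\textbf{Contact part (main obstacle).} The hardest step is showing that the new open book actually supports the surgered contact structure. I would first try to build an explicit contact form. Outside the surgery torus, keep $\alpha$. Inside, use a standard neighborhood model $\ker(dz+x\,dy)$ with $L$ the $y$-axis in the page $\{x=0\}$ direction. After regluing, extend $\alpha$ across the new solid torus so that $d\alpha$ stays positive on the twisted pages; this is the Thurston--Winkelnkemper interpolation $\alpha+K\,d\theta$ adapted to the twist region. For $-1$ surgery the interpolation works directly. For $+1$ surgery it does not, and there I would instead rely on uniqueness: the glued-in solid torus has convex boundary with longitudinal dividing slope, so by Honda's classification it carries a unique tight contact structure up to isotopy rel boundary. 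Both contact structures, the one coming from surgery and the one supported by the new open book, restrict to tight structures on this torus with the same boundary data. They therefore agree, and Theorem~\ref{thm:Giroux_correspondence} then gives the contactomorphism.
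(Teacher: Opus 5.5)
The paper gives no proof of this proposition; it quotes it from Gay and Etnyre. Your outline is the standard one: reduce to one knot, show contact framing $=$ page framing, cut and reglue by $\tau_{\overline L}^{\mp1}$, and identify the contact structures via uniqueness of the tight solid torus. The topological half is fine up to two small slips.
First, the Reeb push-off and the in-page push-off give the same framing because $R_\alpha$ is transverse to $\Sigma_\theta$, so the straight-line homotopy between them never meets $TL$. The reason is not $\xi\cap T\Sigma_\theta=TL$, which fails in general: in the model $ds-p\,dq$ one has $\xi=T\Sigma_\theta$ along all of $L$.
Second, in $\ker(dz+x\,dy)$ the page through the $y$--axis must be $\{z=0\}$, since $\partial_z$ is tangent to $\{x=0\}$.

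The genuine gap is the contact half, which is the real content of the proposition. Uniqueness of the tight structure on the reglued solid torus $S$ can only compare two contact structures that already agree on $M\setminus\mathcal N(L)$. The structure ``supported by the new open book'' is defined only up to isotopy. So before you can speak of its restriction to $S$ ``with the same boundary data'', you need a contact form supported by $(\Sigma,h\circ\tau_{\overline L}^{-\delta})$ that equals $\alpha$ off $\mathcal N(L)$ --- exactly the interpolation you say fails for $+1$. You then need to prove that its restriction to $S$ is tight, which you only assert.
The $-1$ case has the same hole. An interpolated form gives some supported structure agreeing with $\xi$ off $\mathcal N(L)$, but nothing identifies it with contact $(-1)$--surgery until you know it is tight on $S$ (or match it with an explicit Weinstein handle model, as Gay does).
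Your induction over $L_1,\dots,L_n$ also needs this on-the-nose local version rather than an abstract contactomorphism, so that $L_2,\dots,L_n$ stay Legendrian on pages after the first surgery.

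Both points can be settled uniformly in the sign; in particular the interpolation does not fail for $+1$.

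\textbf{Local model.} Take $\mathcal N(L)=A\times[-\varepsilon,\varepsilon]$ with $A=S^1_q\times[-\rho,\rho]_p$, $\alpha=ds+\beta$ and $\beta=-p\,dq$. Let the twist be $(q,p)\mapsto(q+f(p),p)$ with $f'$ even and of either sign. Then $\tau^*\beta=\beta+dF$, where $F(p)=-\int_{-\rho}^{p}uf'(u)\,du$ has compact support in the interior of $A$.

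\textbf{Interpolation.} On the upper half of the reglued slab use $ds+\beta+(1-\chi(s))\,dF$, with $\chi=0$ near $s=0$ and $\chi=1$ near $s=\varepsilon$. The contact condition reads $1+\chi'(s)\,p^2f'(p)>0$. This holds for both signs once $\rho$ is small compared with $\varepsilon$, since $p^2|f'|=O(\rho)$. The slices are still pages with $d\alpha'|_A=dq\wedge dp>0$, so $\alpha'$ supports the new open book and equals $\alpha$ off $\mathcal N(L)$.

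\textbf{Tightness.} Undo the regluing on the upper half; $S$ then carries $\ker(ds+\beta-\chi(s)\,dG)$ with $G=F\circ\tau^{-1}$. The forms $ds+\beta-t\chi(s)\,dG$, $t\in[0,1]$, are contact on all of $A\times\R$, because the condition $d\beta+t\chi'(s)\,\beta\wedge dG>0$ is affine in $t$. The Moser field is $s$--independent for $s\ge\varepsilon$ and vanishes near $\partial A$, so its flow is complete. Gray's theorem therefore embeds $S$ into the contactization $(A\times\R,\ker(ds+\beta))$, which is tight.

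Only then do Kanda--Honda uniqueness on $S$ and uniqueness of the contact structure supported by an open book give the statement.
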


\section{Ribbon surfaces}

Throughout the remainder of this article, $\G$ denotes a Legendrian graph with no vertices of valency~$1$.

\begin{definition}\label{def:ribbon}
    Let $\G$ be a Legendrian graph in a contact manifold $(\M,\xi)$. A \emph{ribbon surface}\footnote{Giroux first introduced such surfaces in \cite{Giroux02} (without giving them a name). The formulation here is taken from \cite[Definition~2.2]{Avdek13}.} of~$\G$ is a compact oriented surface $\Sigma\subset\M$ such that:
    \begin{itemize}
        \item $\G\subset\Sigma$.
        \item There exists a contact form $\alpha$ for $(\M,\xi)$ whose Reeb vector field $\Re_\alpha$ is positively transverse to~$\Sigma$.
        \item There exists a vector field $v$ on $\Sigma$ that directs the characteristic foliation of $\Sigma$, is positively transverse to $\partial\Sigma$, and whose time-$t$ flow $\Phi^t_v$ satisfies
        \[
        \bigcap_{t\in(0,\infty)} \Phi_v^{-t}(\Sigma) = \G.
        \]
    \end{itemize}
    The graph $\G$ is called the \emph{Legendrian skeleton} of~$\Sigma$.
\end{definition}

An example of a ribbon surface is shown in Figure~\ref{fig:ribbon_example}.\footnote{It will become clear later why this is a ribbon surface: it arises from an algorithm that constructs ribbon surfaces from Legendrian graphs, together with an application of the cancellation lemma.}

\begin{figure}[htbp]
    \centering
    \begin{overpic}[scale=0.9]{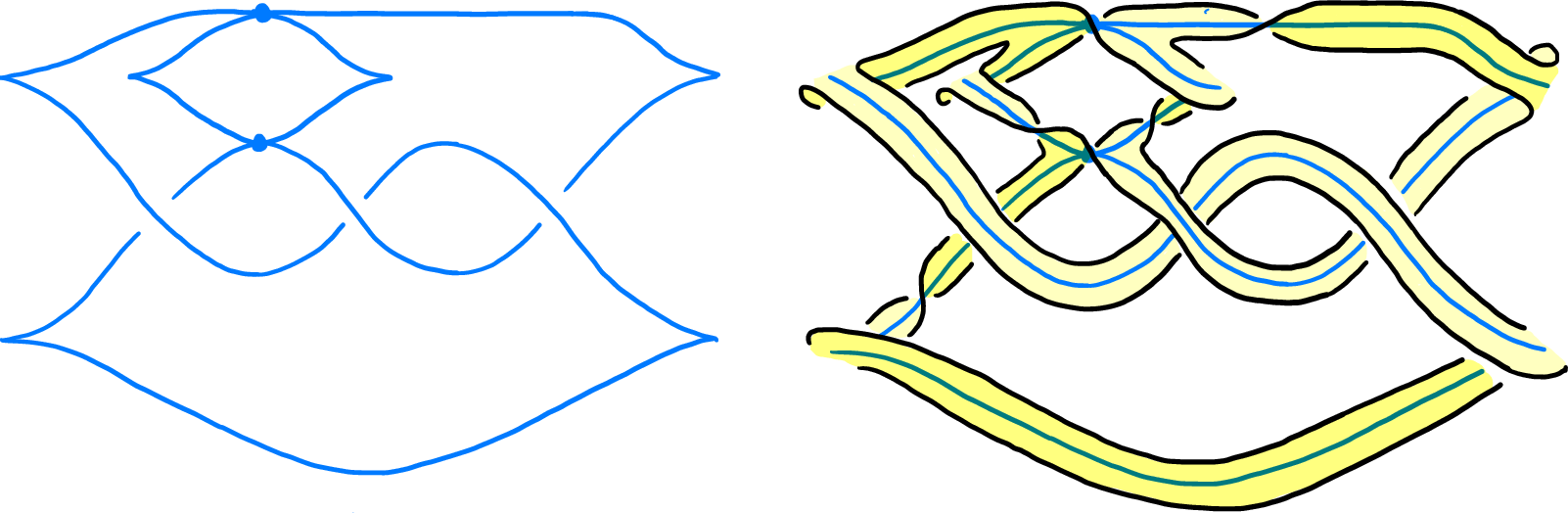}
    \end{overpic}
    \caption{Left: a Legendrian graph in $(\R^3,\xist)$. Right: a ribbon surface of the Legendrian graph.}\label{fig:ribbon_example}
\end{figure}

Every Legendrian graph $\G$ admits a ribbon surface: one can take a sufficiently small ``thickening'' of $\G$ in the direction of the contact structure. Any such ribbon is automatically convex, since it is transverse to a Reeb vector field. However, we can say more about this. 

\begin{lemma}\label{lem:ribbon_convex}
    Let $\Sigma$ be a ribbon surface. Then $\Sigma$ is convex with respect to a contact vector field whose dividing set $\Gamma_\Sigma$ coincides with the boundary $\partial\Sigma$.
\end{lemma}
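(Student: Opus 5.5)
The natural choice of contact vector field is the Reeb vector field $\Re_\alpha$ of the contact form $\alpha$ from Definition~\ref{def:ribbon}. By hypothesis it is positively transverse to $\Sigma$. Its flow preserves $\alpha$, since $\mathcal{L}_{\Re_\alpha}\alpha = d(\alpha(\Re_\alpha)) + \iota_{\Re_\alpha}d\alpha = 0$. Hence $\Re_\alpha$ is a contact vector field, and $\Sigma$ is convex. The remaining work is to identify the dividing set. It is the locus where $\Re_\alpha\in\xi$, i.e.\ where $\alpha(\Re_\alpha)=0$. Since $\alpha(\Re_\alpha)\equiv 1$, this locus is empty. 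So with this choice the dividing set is empty, which is not what we want. We must therefore modify the contact vector field so that it becomes tangent to $\xi$ exactly along $\partial\Sigma$.

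I would work in a vertically invariant neighborhood $\Sigma\times(-\varepsilon,\varepsilon)$ given by the Reeb flow, with coordinate $s$. In this neighborhood $\alpha = ds + \beta$, where $\beta=\alpha|_\Sigma$ is a $1$--form on $\Sigma$ with $d\beta>0$. The characteristic foliation is $\ker\beta$, directed by a vector field $v$ with $\iota_v d\beta = \beta$, up to positive rescaling. This $v$ is Liouville-like: it expands the area form $d\beta$, is transverse to $\partial\Sigma$ pointing outward, and its backward flow retracts $\Sigma$ onto $\G$.

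Next I would look for a contact vector field of the form $\X = u\,\partial_s + w$, where $w$ is tangent to $\Sigma$ and $s$--invariant. Recall that contact vector fields correspond to Hamiltonians $H=\alpha(\X)$. Take $H = u(p)$ on $\Sigma$, extended $s$--independently, and let $\X_H$ be the associated contact vector field. Then $\alpha(\X_H)=u$, so $\X_H$ is $s$--invariant and the dividing set is $u^{-1}(0)$. Transversality of $\X_H$ to $\Sigma$ needs the $\partial_s$--component of $\X_H$ to be nonzero. A computation shows this component equals $u - du(v')$, where $v'$ is the $d\beta$--dual of $\beta$; up to scaling this is the Liouville field, so the component is $u - v(u)$ with suitable normalization.

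So I would choose $u$ with $u>0$ near $\G$, $u=0$ exactly on $\partial\Sigma$, and $u - v(u) > 0$ everywhere. A concrete choice is to use the flow of $v$ to write a collar $\partial\Sigma\times(-\infty,0]$, with $v=\partial_t$ there, and set $u = 1 - e^{t}$. Then $u - \partial_t u = 1 - e^t + e^t = 1 > 0$. Away from the collar, near $\G$, I would take $u\equiv1$ and interpolate along $t$.

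The main obstacle is making this global and choosing the sign conventions correctly. The term $u - v(u)$ has to stay positive through the interpolation region; a function of $t$ alone that is decreasing works, since then $-v(u)\ge 0$. One also has to check that $\Sigma$, which is compact with boundary, sits inside a slightly larger ribbon on which the collar coordinate extends past $t=0$. This is needed for $\X$ to be defined and transverse on a neighborhood of $\partial\Sigma$, with $u$ changing sign transversally there. With that extension, $u^{-1}(0)=\partial\Sigma$ exactly, which gives $\Gamma_\Sigma=\partial\Sigma$.
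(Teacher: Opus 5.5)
Your proof is correct, but it reaches the conclusion by a different route from the paper.

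\textbf{The paper's route.} The paper never builds a contact vector field for the given $\alpha$ directly. It writes down an auxiliary form $\widetilde{\alpha}=\beta+\widetilde{u}\,ds$ on $\Sigma\times[-\varepsilon,\varepsilon]$. Using $\operatorname{div}_{d\beta}v=1$, it checks that this form is contact exactly when $\widetilde{u}-v(\widetilde{u})>0$. Then $\partial_s$ is a contact vector field for $\ker\widetilde{\alpha}$ with dividing set $\{\widetilde{u}=0\}=\partial\Sigma$. Finally, since $\ker\widetilde{\alpha}$ and $\ker\alpha$ induce the same characteristic foliation on $\Sigma$, the paper invokes Giroux's surface neighborhood theorem (Theorem~\ref{thm:neigh_thm}) to transport this convexity back to $\ker\alpha$.

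\textbf{Your route.} You work with the original $\alpha$ and take the $s$-invariant contact Hamiltonian $H=u$. Your computation of the normal component is right. With $\iota_v d\beta=\beta$, the horizontal part $w$ of $X_H$ satisfies $\iota_w d\beta=-du$, so $\beta(w)=v(u)$ and the $\partial_s$-component is $u-v(u)$. The analytic core is therefore literally the same inequality: what the paper needs for $\widetilde{\alpha}$ to be contact is, for you, the transversality of $X_H$ to $\Sigma$. Your collar function $1-e^{t}$ is even one of the paper's admissible choices $g(t)e^{-t}$, after the change of variable $t\mapsto -t$.

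\textbf{What each route buys.} Your construction bypasses the neighborhood theorem. It produces an explicit vector field for $\xi$ itself with the following properties:
\begin{itemize}
\item it is strict and vertically invariant: since $\partial_s u=0$, one has $\mathcal{L}_{X_H}\alpha=0$;
\item it equals $R_\alpha=\partial_s$ wherever $u\equiv 1$;
\item it is tangent to $\partial\Sigma\times(-\varepsilon,\varepsilon)$ along $\partial\Sigma$, because $w$ is tangent to the level sets of $u$.
\end{itemize}
These are exactly the properties the paper asserts in the remark after Theorem~\ref{thm:uniqueness_ribbon}, which it justifies only by re-reading its own proof. The paper's route, in exchange, is the classical Giroux argument that a characteristic foliation divided by $\Gamma$ yields a convex surface. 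It also makes the vertically invariant normal form $\beta+u\,ds$ explicit.

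\textbf{Minor points.} The $s$-invariance of $X_H$ comes from the $s$-invariance of $\alpha$ and $H$, not from the identity $\alpha(X_H)=u$ as you phrase it. Your technical remarks are the right ones and cost nothing: interpolating with a function of $t$ that is decreasing and positive in the interior, and slightly enlarging $\Sigma$ so that $u$ changes sign transversally across $\partial\Sigma$.
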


Thus, from now on a ribbon surface will always be regarded as a convex surface with dividing set equal to its boundary.

\begin{proof}[Proof of Lemma~\ref{lem:ribbon_convex}]
    Use the notation of Definition~\ref{def:ribbon}. Flowing $\Sigma$ along the Reeb vector field $\Re_\alpha$ produces a vertically invariant neighborhood $\Sigma \times [-\varepsilon,\varepsilon]$ in which
    \[
        \alpha = \beta + ds,
    \]
    with $\beta \in \Omega^1(\Sigma)$, and $s \in [-\varepsilon,\varepsilon]$. We have that
    \[
        d\alpha|_\Sigma = d\beta =: \omega,
    \]
    and $\omega$ is a positive area form on $\Sigma$ since $\Re_\alpha$ is positively transverse to $\Sigma$. Up to rescaling the vector field $v$ defining the characteristic foliation, we may assume that $\beta = i_v d\beta$. Then
    \[
        \operatorname{div}_{d\beta}(v)\,d\beta
        =
        d(i_v d\beta)
        =
        d\beta,
    \]
    so $\operatorname{div}_{d\beta}(v) = 1 > 0$ everywhere. Together with the fact that $\partial \Sigma$ is a positive transverse link, this implies that the characteristic foliation of~$\Sigma$ is divided by the multicurve $\partial \Sigma$.
    
    The proof of \cite[Proposition~II.2.1]{Giroux91} shows that we can find a contact vector field transverse to $\Sigma$ which induces $\partial \Sigma$ as dividing set. For completeness, we reproduce the argument here (in the form adapted from \cite[Theorem 4.8.5]{Geiges_book}).

    Consider the $1$-form $\widetilde{\alpha}$ on $\Sigma \times [0,1]$ given by
    \[
        \widetilde{\alpha} = \beta + \widetilde{u}\,ds,
    \]
    where $\widetilde{u} \in C^\infty(\Sigma \times [-\varepsilon,\varepsilon])$ is a function to be chosen. The condition for $\widetilde{\alpha}$ to be a contact form is
    \[
        \widetilde{u}\,\operatorname{div}_\omega(v) - v(\widetilde{u}) > 0,
    \]
    which in our case reduces to
    \[
        \widetilde{u} - v(\widetilde{u}) > 0,
    \]
    since $\operatorname{div}_\omega(v) = 1$.

    Now use the flow of $-v$ to identify a collar neighborhood of $\partial \Sigma$ with $\partial \Sigma \times [0,\varepsilon]$. In these coordinates, the parameter $t \in [0,\varepsilon]$ is such that $v = -\partial_t$.

    Define $\widetilde{u}$ to be equal to $1$ outside $\partial \Sigma \times [0,\varepsilon]$. In this region the contact condition for $\widetilde{\alpha}$ is clearly satisfied.

    On $\partial \Sigma \times [0,\varepsilon]$, choose $\widetilde{u}$ of the form
    \[
        \widetilde{u}(t) = g(t)e^{-t}.
    \]
    Then
    \[
        \widetilde{u} - v(\widetilde{u})
        =
        \widetilde{u} + \frac{\partial \widetilde{u}}{\partial t}
        =
        \frac{\partial g(t)}{\partial t}\,e^{-t},
    \]
    so the contact condition is satisfied provided we choose $g$ with $\partial g / \partial t > 0$.

    We can easily find a smooth function $g$ with this property, and we may further assume that $g = 1/e^{-t}$ near $t = \varepsilon$ (so that $\widetilde{u}$ matches the value $1$ outside $\partial \Sigma \times [0,\varepsilon]$) and that $g = 0$ along $\partial \Sigma \times \{0\}$. The latter condition implies that $\widetilde{u} = 0$ precisely on $\partial \Sigma \times \{0\}$. Hence $\partial_s$ is a contact vector field for the contact form $\widetilde{\alpha}$, transverse to $\Sigma$ and inducing $\partial \Sigma$ as dividing set.

    To conclude, note that $\ker \widetilde{\alpha}$ induces on $\Sigma$ the same characteristic foliation as $\ker \alpha$. By Theorem~\ref{thm:neigh_thm}, this implies that $\Sigma$ is also convex with respect to $\ker \alpha$, with dividing set equal to $\partial \Sigma$ for a suitable contact vector field.
\end{proof}

A ribbon surface $\Sigma$ of a Legendrian graph $\G$ carries a natural handle decomposition (which depends on~$\G$). The $0$--handles are the thickenings of pairwise disjoint regular closed neighborhoods of the vertices of $\G$, and the $1$--handles are the connected components of the closure of the complement of the union of the $0$--handles. See Figure~\ref{fig:handle_dec_ribbon}.

\begin{figure}[htbp]
    \centering
    \begin{overpic}[scale=0.9]{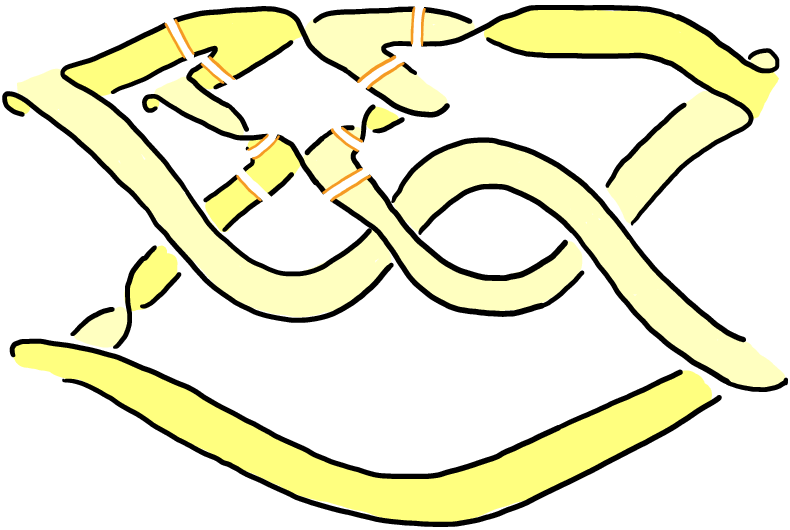}
    \end{overpic}
    \caption{The handle decomposition of the ribbon from Figure~\ref{fig:ribbon_example}.}\label{fig:handle_dec_ribbon}
\end{figure}

A ribbon surface is called \emph{generic}\footnote{This is non-standard terminology, adopted here for convenience.} if its characteristic foliation has the following form:
\begin{itemize}
    \item every $0$--handle contains exactly one singular point, which is a positive elliptic point;
    \item each $1$--handle contains finitely many singular points, all of which are positive elliptic or positive hyperbolic.
\end{itemize}
In practice, the characteristic foliation on the handles of a generic ribbon looks as in Figure~\ref{fig:generic_ribbon}.

\begin{figure}[htbp]
    \centering
    \begin{overpic}[scale=1]{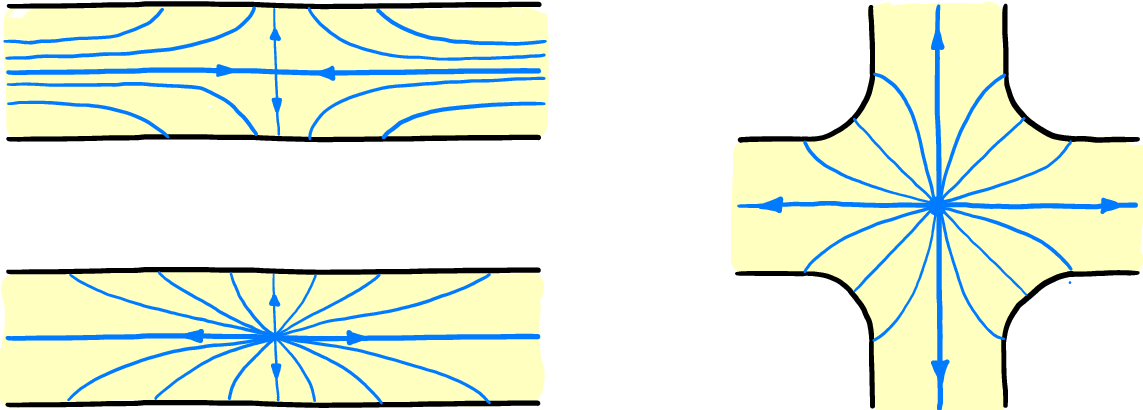}
    \end{overpic}
    \caption{Left: the characteristic foliation on a $1$--handle of a generic ribbon, obtained by concatenating pieces like those at the top-left and bottom-left. Right: the characteristic foliation on a $0$--handle. Here the vertex has valency~$4$, but the general case is analogous.}\label{fig:generic_ribbon}
\end{figure}

Since this foliation is divided by $\partial\Sigma$, Giroux's flexibility Theorem~\ref{thm:Giroux_flexibility} implies that any ribbon can be perturbed to a generic one.

\subsubsection{Ribbon surfaces in the standard contact structure}

We now specialize to $(\R^3,\xist)$. Given the front projection of a generic Legendrian graph $\G\subset(\R^3,\xist)$, there is an explicit procedure (adapted from \cite[Algorithm~2]{Avdek13}) for drawing the front projection of a ribbon surface~$\Sigma$ for~$\G$.

For each piece of the front projection of $\G$ as in Figure~\ref{fig:Legendrian_projection}, we associate a surface as indicated in the top row of Figure~\ref{fig:ribbon_local}.

\begin{figure}[htbp]
    \centering
    \begin{overpic}[scale=1]{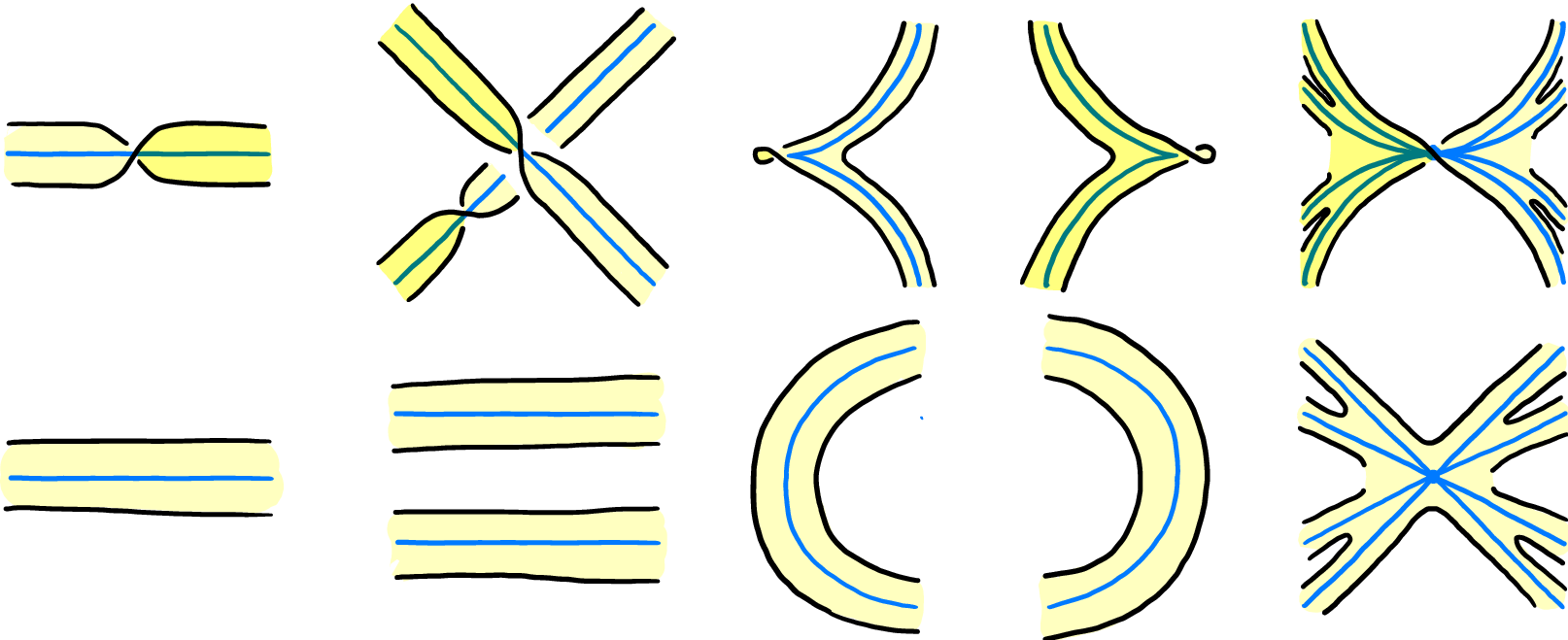}
        \put(-4,38){(a)}
        \put(20,38){(b)}
        \put(47,38){(c)}
        \put(61,38){(d)}
        \put(79,38){(e)}
        \put(83,11){$\boldsymbol\cdot$}
        \put(83,10){$\boldsymbol\cdot$}
        \put(83,9){$\boldsymbol\cdot$}
        \put(99,11){$\boldsymbol\cdot$}
        \put(99,10){$\boldsymbol\cdot$}
        \put(99,9){$\boldsymbol\cdot$}
        \put(83,31){$\boldsymbol\cdot$}
        \put(83,30){$\boldsymbol\cdot$}
        \put(83,29){$\boldsymbol\cdot$}
        \put(99,31){$\boldsymbol\cdot$}
        \put(99,30){$\boldsymbol\cdot$}
        \put(99,29){$\boldsymbol\cdot$}
    \end{overpic}
    \caption{Top row: front projection of the ribbon near a non-vertical edge, a crossing, cusps, and a vertex. Bottom row: corresponding Lagrangian projections.}\label{fig:ribbon_local}
\end{figure}

If the vertex in Figure~\ref{fig:Legendrian_projection}(e) only has edges coming from one side, the ribbon looks locally as in Figure~\ref{fig:ribbon_local2}.

\begin{figure}[htbp]
    \centering
    \begin{overpic}[scale=1]{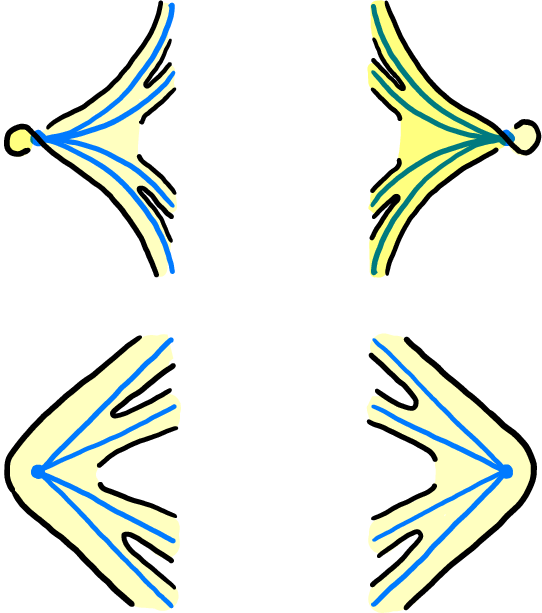}
        \put(-4,96){(f)}
        \put(49,96){(g)}
    \end{overpic}
    \caption{Top row: front projection of the ribbon near a one-sided vertex. Bottom row: the corresponding Lagrangian projections.}\label{fig:ribbon_local2}
\end{figure}

The front projection of a surface does not fully determine the surface, but it provides a good intuition for how $\Sigma$ looks. One additional piece of information is that we impose the ribbon $\Sigma$ to be everywhere transverse to $\partial_z$ (a Reeb vector field), and we choose an orientation so that the lighter side is positive (that is, we orient $\Sigma$ using the orientation of $\R^3$ and $\partial_z$). In particular, the Lagrangian projection of $\Sigma$ is an immersion and looks like the bottom rows of Figures~\ref{fig:ribbon_local} and~\ref{fig:ribbon_local2}.

We can (and will) arrange that on each local piece in Figure~\ref{fig:ribbon_local} the characteristic foliation has exactly one critical point: a positive hyperbolic point for the piece in~(a), and a positive elliptic point for the others. In the front projection, we can imagine this critical point as being located at the crossing of the ribbon surface's boundary.

Figure~\ref{fig:ribbon_example0} shows an example of this construction applied to a Legendrian graph $\G$.

\begin{figure}[htbp]
    \centering
    \begin{overpic}[scale=0.9]{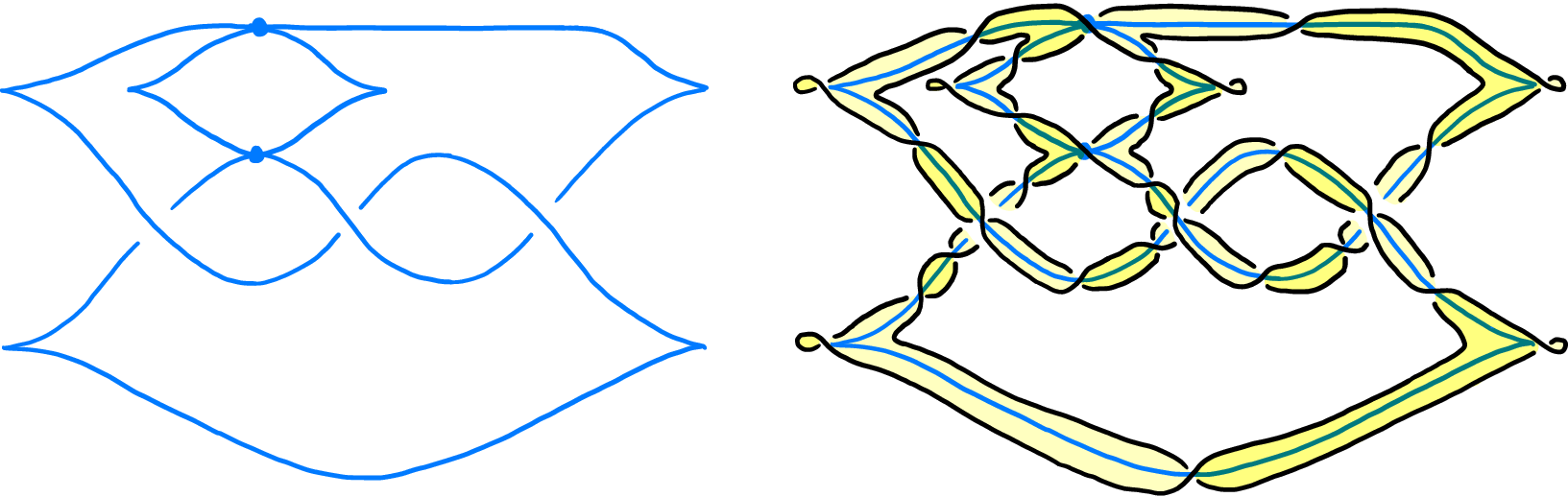}
    \end{overpic}
    \caption{Left: a Legendrian graph. Right: a ribbon surface of the graph obtained by the local procedure above.}\label{fig:ribbon_example0}
\end{figure}

Note that, in particular, we assume that the surfaces created in this way are generic ribbon surfaces.

The elliptic and hyperbolic points in the $1$--handles satisfy the hypotheses of the Elimination Lemma (see \cite[Lemma~2.15]{Giroux00}\footnote{See \cite[Lemma 31]{Massot14} for an English version.}), so they can be canceled in pairs by a small perturbation of~$\Sigma$. The local picture is shown in Figure~\ref{fig:ribbon_simplif_local}.

\begin{figure}[htbp]
    \centering
    \begin{overpic}[scale=1]{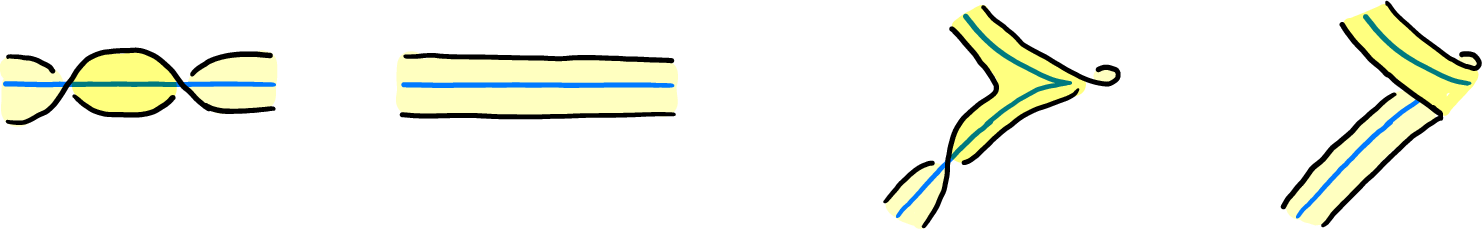}
    \end{overpic}
    \caption{Local simplifications of the ribbon via elliptic–hyperbolic cancellation. Horizontal and vertical reflections are also allowed (with crossings and shading adjusted).}\label{fig:ribbon_simplif_local}
\end{figure}

A global example is given in Figure~\ref{fig:ribbon_simpli_ex}. In each $1$--handle at least one hyperbolic singularity must remain.

\begin{figure}[htbp]
    \centering
    \begin{overpic}[scale=0.9]{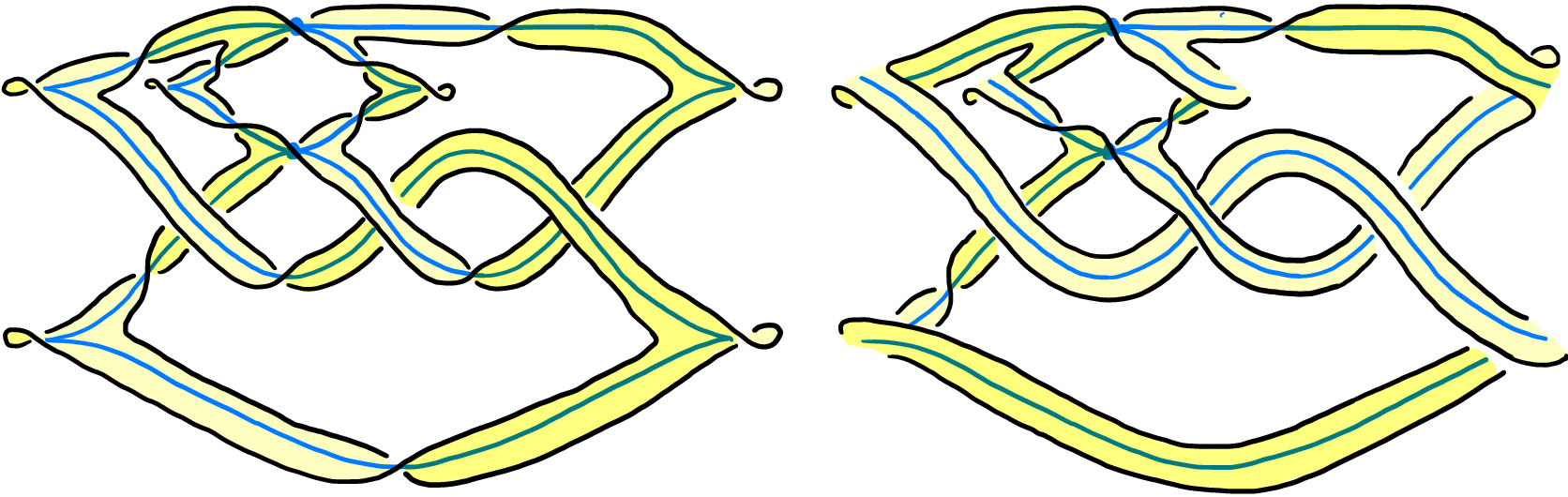}
    \end{overpic}
    \caption{Simplifications of the ribbon in Figure~\ref{fig:ribbon_example0}. Left: after canceling elliptic points near crossings. Right: after also canceling elliptic points near cusps.}\label{fig:ribbon_simpli_ex}
\end{figure}

\subsubsection{Legendrian realization for ribbon surfaces}

The Legendrian realization principle (Theorem~\ref{thm:Legendrian_realization}) gives a sufficient condition (being nonisolating) for a graph on a convex surface to be Legendrian realizable. While this condition is well known to be necessary, we provide a proof in this specific context for the sake of completeness. To this end, note that since a ribbon surface $\Sigma$ is a convex surface whose dividing set coincides with its boundary, a curve $\aa$ in $\Sigma$ is nonisolating if and only if it is homologically nontrivial.\footnote{In this article, we prefer to phrase results using the term ``homologically nontrivial'' rather than ``nonisolating,'' as it is independent of the choice of contact vector field that makes the ribbon convex.}

\begin{lemma}\label{lem:inverse_Leg_real}
    Let $\L$ be a Legendrian knot in a contact manifold $(M,\xi)$. Suppose that $\L$ is embedded in a convex surface $\Sigma$ whose dividing set $\Gamma_\Sigma$ coincides with the boundary $\partial\Sigma$. Then $\L$ is homologically nontrivial in~$\Sigma$.
\end{lemma}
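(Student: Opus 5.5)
The plan is to argue by contradiction, using the divergence condition that defines a divided characteristic foliation. Suppose $\L$ is homologically trivial in $\Sigma$. First I reduce this to the statement that $\L$ bounds a compact connected subsurface $\S\subset\Sigma$ disjoint from $\partial\Sigma$. This is standard surface topology. A simple closed curve that is null-homologous in $H_1(\Sigma)$ is separating. If both complementary pieces contained boundary components of $\Sigma$, then $\L$ would be homologous to a nonempty proper sub-sum of the oriented boundary components. Such a sub-sum is nonzero in $H_1(\Sigma)$, since the only relation among boundary classes comes from the full boundary. So one side, call it $\S$, contains no component of $\partial\Sigma$, and $\partial\S=\L$.

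Next I use that $\L$ is Legendrian and contained in $\Sigma$. At each point $p\in\L$ the tangent line $T_p\L$ lies in $\xi_p\cap T_p\Sigma$. This intersection is the characteristic line field at nonsingular points, and is all of $T_p\Sigma$ at singular points. Hence $\L$ is a union of leaves and singular points of $\Sigma_\xi$. Now take a vector field $v$ and an area form $\omega$ witnessing that $\Sigma_\xi$ is divided by $\Gamma_\Sigma=\partial\Sigma$. Then $v$ is everywhere tangent to $\L=\partial\S$, vanishing at singular points and spanning $T\L$ elsewhere. Also $\mathcal{L}_v\omega=\operatorname{div}_\omega(v)\,\omega$ with $\operatorname{div}_\omega(v)\neq0$ on $\Sigma\setminus\Gamma_\Sigma=\operatorname{int}\Sigma$.

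The key computation is Stokes' theorem on $\S$:
\[
\int_\S \operatorname{div}_\omega(v)\,\omega=\int_\S \mathcal{L}_v\omega=\int_\S d(i_v\omega)=\int_{\partial\S} i_v\omega=0.
\]
The last integral vanishes because $v$ is tangent to $\partial\S$, so $i_v\omega$ restricts to zero on $T\partial\S$. On the other hand, $\S$ is connected and contained in $\operatorname{int}\Sigma$, where $\operatorname{div}_\omega(v)$ is continuous and nowhere zero. Hence the divergence has constant sign on $\S$, and the left-hand side is nonzero because $\S$ has positive area. This is the desired contradiction.

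I expect the only delicate step to be the first one: translating homological triviality into bounding a subsurface disjoint from $\partial\Sigma$. I would handle it with the boundary-class argument above, applied componentwise if $\Sigma$ is disconnected. Everything else, namely that $\L$ is tangent to the characteristic foliation and the divergence/Stokes computation, is local and routine.
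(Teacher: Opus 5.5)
Your proof is correct and is essentially the paper's argument: both apply Stokes' theorem on the subsurface bounded by $L$, where the boundary term vanishes because $L$ is Legendrian and the integrand has a fixed sign. Your $i_v\omega$ plays the role of the paper's $\beta$ (they coincide if you normalize $v$ by $i_v\omega=\beta$), and working directly with the divergence condition avoids the rescaling by $u$ that the paper's claim ``$d\beta>0$ on $\widetilde{\Sigma}$'' tacitly needs; you also justify the step the paper takes for granted, namely that a null-homologous $L$ bounds a subsurface disjoint from $\partial\Sigma$.
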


\begin{proof}
    Assume by contradiction that $\L$ is homologically trivial. Then $\L$ bounds a subsurface $\widetilde{\Sigma}\subset\Sigma$. In a vertically invariant neighborhood $\Sigma\times(-\varepsilon,\varepsilon)$ we may write $\xi=\ker(\beta+u\,ds)$, with $\beta\in\Omega^1(\Sigma)$. Since $\L$ is Legendrian, we have
    \[
    \int_\L \beta = 0.
    \]
    On the other hand, the contact condition and the assumption that $u>0$ on $\widetilde{\Sigma}$ imply that $d\beta$ is a positive area form on~$\widetilde{\Sigma}$, so
    \[
    \int_{\widetilde{\Sigma}} d\beta > 0.
    \]
    This contradicts Stokes' theorem, which would give
    \[
    0 = \int_\L\beta = \int_{\widetilde{\Sigma}} d\beta.
    \]
\end{proof}

Thus homological nontriviality is a necessary condition. For our algorithm it is convenient to use the following equivalent combinatorial characterization.

\begin{lemma}\label{lem:odd_number}
    Let $\Sigma$ be a compact orientable surface with non-empty boundary, equipped with a handle decomposition. A simple closed curve $\aa\subset\Sigma$ is homologically nontrivial if and only if there exists a $1$--handle whose cocore intersects $\aa$ an odd number of times.
\end{lemma}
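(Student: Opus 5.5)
The plan is to reduce the statement to mod~$2$ homology and then use the fact that the handle decomposition has no $2$--handles, so cellular cochains can be read off directly from cocores.

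\textbf{Setup.} Let $\Sigma$ have $0$--handles $h^0_1,\dots,h^0_m$ and $1$--handles $h^1_1,\dots,h^1_n$, with cocores $c_1,\dots,c_n$. These cocores are properly embedded arcs, and there are no $2$--handles because $\partial\Sigma\neq\emptyset$. Collapsing each handle onto its core shows that $\Sigma$ deformation retracts onto a graph $\Gamma$. This graph has one vertex for each $0$--handle and one edge $e_i$ for each $1$--handle, where $e_i$ is the core of $h^1_i$. Since $\Gamma$ has no $2$--cells, every cellular $1$--cochain on $\Gamma$ is a cocycle. Hence $H^1(\Sigma;\Z/2)=H^1(\Gamma;\Z/2)$ is the quotient of $C^1(\Gamma;\Z/2)=(\Z/2)^n$ by coboundaries. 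Moreover, $H^1(\Sigma;\Z/2)=\mathrm{Hom}(H_1(\Sigma;\Z/2),\Z/2)$.

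\textbf{Step 1: the key pairing.} After an isotopy, $\aa$ is transverse to every cocore, and it crosses each $1$--handle in arcs parallel to the core. Retracting onto $\Gamma$ then sends $\aa$ to the cellular $1$--chain $\sum_i \#(\aa\cap c_i)\, e_i$. This gives
\[
\langle e_i^*, [\aa]\rangle \equiv \#(\aa\cap c_i) \pmod 2 .
\]
The left-hand side depends only on the class of $e_i^*$ in $H^1$. Since the classes $[e_i^*]$ span $H^1(\Sigma;\Z/2)$, universal coefficients give: $[\aa]\neq 0$ in $H_1(\Sigma;\Z/2)$ if and only if some cocore meets $\aa$ an odd number of times. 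Equivalently, this is the mod~$2$ intersection pairing between $H_1(\Sigma)$ and $H_1(\Sigma,\partial\Sigma)$, which is perfect by Lefschetz duality, with the cocores spanning $H_1(\Sigma,\partial\Sigma;\Z/2)$.

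\textbf{Step 2: from $\Z$ to $\Z/2$.} It remains to show that a simple closed curve is nontrivial in $H_1(\Sigma;\Z)$ if and only if it is nontrivial in $H_1(\Sigma;\Z/2)$. One direction is immediate: if $[\aa]=0$ over $\Z$, then $[\aa]=0$ over $\Z/2$. For the converse, suppose $[\aa]\neq 0$ over $\Z$. Then $\aa$ does not bound a subsurface, so $\aa$ is nonseparating. Therefore $\Sigma\setminus\aa$ is connected, and we can join the two sides of $\aa$ by a path in the complement. This produces a closed curve $\mathfrak b$ meeting $\aa$ transversely in exactly one point. The mod~$2$ intersection pairing on $H_1(\Sigma;\Z/2)$ is well defined, and it gives $\aa\cdot\mathfrak b=1$, so $[\aa]\neq 0$ in $\Z/2$ homology.

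The implication ``homologically nontrivial $\Rightarrow$ nonseparating'' uses that a simple closed curve which separates $\Sigma$ bounds one of the two pieces, as an oriented boundary component. Here some care is needed, because each piece may also contain components of $\partial\Sigma$, so $\aa$ bounds that piece only relative to boundary circles. I expect this to be the main obstacle: pinning down which convention ``homologically nontrivial'' refers to, and checking that $\Z$ and $\Z/2$ triviality agree under it. If the paper's notion means separating, I will argue directly that a separating curve meets each cocore an even number of times, since each cocore arc alternates between the two sides of $\aa$ and both of its endpoints lie on $\partial\Sigma$. In that case the equivalence reduces cleanly to Step 1 together with the dual-curve argument.
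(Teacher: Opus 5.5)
\textbf{There is a genuine gap, in Step 2.} Your Step 1 is essentially the paper's argument: it pairs $[\aa]$ mod $2$ against the cocores, which span $H_1(\Sigma,\partial\Sigma;\Z/2)$. Your cellular version even justifies the spanning claim that the paper leaves implicit. The problem is the integral-versus-mod-$2$ reduction, which the paper simply asserts.

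The inference ``$[\aa]\neq 0$ over $\Z$, so $\aa$ does not bound a subsurface, so $\aa$ is nonseparating'' fails at the second arrow. Take a separating curve whose two complementary pieces both contain components of $\partial\Sigma$. It is homologous to a proper nonempty subset of the boundary circles, so it is nonzero in $H_1(\Sigma;\Z)$. The core of an annulus is such a curve, and curves of this kind do occur in the paper (e.g.\ the core of the annular ribbon of $B$). Any closed curve meets a separating curve an even number of times, so your dual-curve argument cannot reach this case. The implication ``nontrivial over $\Z$ $\Rightarrow$ nontrivial over $\Z/2$'' is therefore left unproved exactly here.

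Your fallback is also false. A cocore's endpoints may lie on boundary circles on opposite sides of $\aa$, so a separating curve can meet a cocore an odd number of times; the annulus core meets the cocore of its $1$--handle once. Indeed, if ``trivial'' meant ``separating'', the lemma itself would be false. The intended notion is triviality in $H_1(\Sigma;\Z)$, i.e.\ bounding a subsurface, as in Lemma~\ref{lem:inverse_Leg_real}.

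\textbf{The missing idea is a dual arc instead of a dual closed curve.} Suppose $\aa$ separates $\Sigma$ into $A\sqcup B$ and does not bound a subsurface. Then both $A$ and $B$ meet $\partial\Sigma$. Take an embedded path in $A$ from $\partial\Sigma$ to a point $p\in\aa$, and continue it by an embedded path in $B$ from $p$ to $\partial\Sigma$. This gives a properly embedded arc $\mathfrak b$ with $\#(\aa\cap\mathfrak b)=1$. The well-defined mod $2$ pairing $H_1(\Sigma;\Z/2)\times H_1(\Sigma,\partial\Sigma;\Z/2)\to\Z/2$, which you already invoke, then gives $[\aa]\neq 0$ mod $2$. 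Equivalently, for connected $\Sigma$ we have $H_2(\Sigma;\Z/2)=0$, so the only relation among the boundary classes in $H_1(\Sigma;\Z/2)$ is that their total sum vanishes. Together with your nonseparating case, this completes Step 2.

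\textbf{A separate, smaller point.} ``No $2$--handles because $\partial\Sigma\neq\emptyset$'' is not a valid reason. Surfaces with boundary do admit decompositions with $2$--handles, and then the lemma fails: a curve parallel to a $2$--handle's attaching circle is null-homologous yet can cross a cocore once. You should take the absence of $2$--handles as a hypothesis. It holds for the ribbon decompositions in the paper, and the paper's proof relies on it implicitly as well.
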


\begin{proof}
    First, we note that a simple closed curve on a surface with nonempty boundary is trivial in $H_1(\Sigma,\partial\Sigma;\Z)$ if and only if it is trivial in $H_1(\Sigma,\partial\Sigma;\Z_2)$. 

    Consider the algebraic intersection pairing
    \[
    \langle\cdot,\cdot\rangle\colon
    H_1(\Sigma,\partial\Sigma;\Z_2)\times H_1(\Sigma;\Z_2)\to\Z_2,
    \]
    which counts transverse intersection points modulo~$2$.\footnote{By Lefschetz duality this is dual to the nondegenerate pairing $H^1(\Sigma;\Z_2)\times H^1(\Sigma,\partial\Sigma;\Z_2)\to H^2(\Sigma,\partial\Sigma;\Z_2)\cong\Z_2$ induced by the cup product.}

    Suppose first that there is a cocore $\dd$ of a $1$--handle intersecting $\aa$ transversely in an odd number of points. Then $[\aa]\in H_1(\Sigma;\Z_2)$ and $[\dd]\in H_1(\Sigma,\partial\Sigma;\Z_2)$ satisfy
    \[
    \langle[\aa],[\dd]\rangle = 1,
    \]
    so $[\aa]\neq 0$.

    Conversely, assume that all cocores $\dd_i$ of $1$--handles intersect $\aa$ in an even number of points. Since the pairing is nondegenerate and $[\aa]\neq 0$, there exists some properly embedded arc $\dd$ with $\langle[\aa],[\dd]\rangle=1$. We can express $[\dd]$ as a $\Z_2$–linear combination
    \[
    [\dd] = \sum_i c_i[\dd_i],\qquad c_i\in\Z_2,
    \]
    of cocores. If every $\langle[\aa],[\dd_i]\rangle$ were zero, then
    \[
    1
    = \langle[\aa],[\dd]\rangle
    = \Big\langle[\aa],\sum_i c_i[\dd_i]\Big\rangle
    = \sum_i c_i \langle[\aa],[\dd_i]\rangle
    = 0,
    \]
    a contradiction. Hence some cocore intersects $\aa$ an odd number of times.
\end{proof}

\section{Uniqueness of the Legendrian realization}

In this section, we drop the assumption that we are working in $(\R^3,\xist)$. This section is not technically needed for the later parts of the article, but it complements the algorithm and the results here will also be used in forthcoming work of the author with collaborators on a contact version of Kirby's theorem~\cite{KegelStenhedeVertesiZuddasKirby}.

All the necessary ingredients already appear in \cite{Giroux02,Etnyre06,MR2557137}; what is missing is a brief analysis of how two isotopies of a contact manifold that induce the same path of contact structures can differ. This analysis forms the core of the second part of the proof of Theorem~\ref{thm:uniqueness_ribbon}.

\subsubsection{Uniqueness of the Legendrian realization for ribbon surfaces}

\begin{definition}\label{def:leg_real}
    Let $\Sigma$ be a compact, oriented surface. Let $\xi$ be a vertically invariant contact structure on $\Sigma\times I$, where either $I=[-\varepsilon,\varepsilon]$ or $I=\R$.
    
    We call an isotopy $\phi_t$ of $\Sigma=\Sigma\times\{0\}$ in $\Sigma\times I$ \emph{admissible} if $\phi_t(\Sigma)$ is always transverse to $\partial_z$ and is the identity on a neighborhood of $\partial\Sigma$ (if $\partial\Sigma\neq\emptyset$).
    
    Let $\aa$ be the isotopy class of a simple closed curve on $\Sigma$. We say that a Legendrian knot $L\subset\Sigma\times I$ is a \emph{Legendrian realization} of $\aa$ if there is an admissible isotopy $\phi_t$ such that $L\subset\phi_1(\Sigma)$ and the isotopy class of $\phi_1^{-1}(L)$ on $\Sigma$ coincides with $\aa$. 
    
    When we say that $(L,\phi_t)$ is a Legendrian realization of $\aa$, we are assuming we have a pair where $L$ is a Legendrian realization of $\aa$ and $\phi_t$ is an admissible isotopy of $\Sigma$ with the properties just described.
\end{definition}

The result of the Legendrian realization principle (Theorem~\ref{thm:Legendrian_realization}) is indeed a Legendrian realization of $\aa$, justifying the name.

\begin{theorem}\label{thm:uniqueness_ribbon}
    Let $\Sigma$ be a compact, oriented surface with $\partial\Sigma\neq\emptyset$. Let $\xi=\ker\alpha$ be a vertically invariant contact structure on $\Sigma\times I$, where $I=[-\varepsilon,\varepsilon]$ or $I=\R$, for which $\partial_z$ is a Reeb vector field, and assume $\partial\Sigma$ (identified with $\partial(\Sigma\times\{0\})$) is a positive transverse link.
    
    Then any two Legendrian realizations of $\aa$ are Legendrian isotopic.
\end{theorem}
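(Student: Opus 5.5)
The plan is to reduce everything to a single surface and then move curves around on it. Let $(L_0,\phi^0_t)$ and $(L_1,\phi^1_t)$ be two Legendrian realizations of $\aa$, and set $\Sigma_i:=\phi^i_1(\Sigma)$. The argument has two parts.

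\textbf{Step 1: put both knots on the same surface.} Admissible isotopies keep each $\Sigma_i$ transverse to the Reeb field $\partial_z$. Hence each $\Sigma_i$ is a graph over $\Sigma$ in the vertically invariant neighborhood: $\Sigma_i=\{(p,f_i(p))\}$ with $f_i$ vanishing near $\partial\Sigma$. Vertical translation by $\partial_z$ is a strict contactomorphism, since $\partial_z$ is Reeb and $\alpha$ is $z$-invariant. So I would write $\alpha=\beta+dz$, note that $\alpha$ restricted to the graph of $f$ equals $\beta+df$, and work in these terms.

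Consider the linear path $f_s=(1-s)f_0+sf_1$. Every graph in this family is transverse to $\partial_z$ and agrees with $\Sigma$ near the boundary. The characteristic foliation of the graph of $f_s$ is the kernel of $\beta+df_s$. The goal is to realize this family of surfaces by a contact isotopy of $\Sigma\times I$ that is supported near the surfaces and fixed near $\partial\Sigma\times I$. I would first try the map $(p,z)\mapsto (p,z+f_s(p)-f_0(p))$. It is not contact, but it pulls $\alpha$ back to $\alpha+d(f_s-f_0)$, which differs from $\alpha$ by an exact $1$-form. Gray stability then gives a contact isotopy. The obstacle is that Gray's vector field could move things near the boundary. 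To control this, I would keep all the $\beta+df_s$ contact and use Moser's method, with the correcting Hamiltonian chosen to vanish near $\partial\Sigma$, because $f_s-f_0$ vanishes there.

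The resulting contactomorphism $\Psi$ maps $\Sigma_0$ to $\Sigma_1$ and is isotopic to the identity through contactomorphisms. Composing, $\Psi(L_0)$ is a Legendrian knot on $\Sigma_1$, Legendrian isotopic to $L_0$. Moreover, the class of $(\phi^1_1)^{-1}\Psi(L_0)$ on $\Sigma$ is still $\aa$. Two points need checking here. First, the induced map $\Sigma\to\Sigma$ is isotopic to the identity rel boundary, because everything is a graph over $\Sigma$ and vertical projection conjugates the maps. Second, one might worry about monodromy, namely Dehn twists about boundary-parallel curves. I would argue this cannot arise, because every isotopy involved is fixed near the boundary and the projection to $\Sigma$ is canonical. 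This is the part about ``two isotopies inducing the same contact structures differing,'' and I expect it to be the main obstacle: we must make sure that vertical projection identifies the isotopy classes on $\Sigma_0$ and $\Sigma_1$ consistently.

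\textbf{Step 2: two Legendrians on one convex surface in the same isotopy class.} We now have Legendrians $L,L'\subset\Sigma_1$ that are isotopic on $\Sigma_1$. By Lemma~\ref{lem:ribbon_convex}-type reasoning, $\Sigma_1$ is convex with dividing set $\partial\Sigma_1$. Pick a smooth isotopy $\gamma_t$ of curves on $\Sigma_1$ from $L$ to $L'$, in general position. Each $\gamma_t$ is nonisolating because it is homologically nontrivial by Lemma~\ref{lem:inverse_Leg_real}. Apply a parametric version of Theorem~\ref{thm:Legendrian_realization}, whose proof is Giroux flexibility (Theorem~\ref{thm:Giroux_flexibility}). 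The idea is to construct a smooth family of foliations $\mathcal F_t$ divided by $\partial\Sigma_1$ such that $\gamma_t$ is a union of leaves, with $\mathcal F_0,\mathcal F_1$ equal to the actual characteristic foliation near $L$ and $L'$. Flexibility applied in families then yields surfaces $\psi_t(\Sigma_1)$ on which $\psi_t(\gamma_t)$ is Legendrian. These are Legendrian by construction and vary continuously, so they give a Legendrian isotopy, up to endpoint corrections that are themselves Legendrian isotopies realized by contact isotopies.

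If the parametric flexibility is too delicate, a cleaner alternative is available. Each $\gamma_t$ is nonisolating, so one can build a divided foliation for it directly: make $\gamma_t$ a closed leaf, which is possible exactly because it meets the region of positive divergence and separates no disc from $\partial\Sigma_1$. The realization then depends continuously on $t$, by Moser uniqueness in Theorem~\ref{thm:neigh_thm}.
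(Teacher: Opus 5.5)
\textbf{Step~1 fails, and Step~2 depends on it.} You assert a contact isotopy $\Psi$ of $(\Sigma\times I,\xi)$ with $\Psi(\Sigma_0)=\Sigma_1$, but such a $\Psi$ does not exist in general. A contactomorphism carrying $\Sigma_0$ onto $\Sigma_1$ must carry characteristic foliation to characteristic foliation. Under vertical projection these foliations are $\ker(\beta+df_0)$ and $\ker(\beta+df_1)$, which are typically not conjugate.

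For example, take a realization $(L_0,\phi^0_t)$ with $\beta+df_0$ having finitely many zeros. Set $f_1=f_0+h$, where $h$ is $C^0$--small, supported in a disc away from $\pi(L_0)$, and creates two new zeros of $\beta+df_1$ (the birth of an elliptic--hyperbolic pair). Then $L_1:=L_0\subset\Sigma_1$ is again a Legendrian realization of $\aa$, but the two characteristic foliations have different numbers of singular points.

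Your Gray--Moser step cannot produce $\Psi$ either. For $\alpha_s=\alpha+s\,dg$ with $g=f_1-f_0$, the Reeb field is $\partial_z$ for every $s$. The Moser equation $\mathcal{L}_{X}\alpha_s+dg=0$ is solved by $X=-g\,\partial_z$, which vanishes near $\partial\Sigma\times I$ as you wanted, and whose flow is exactly the inverse of your vertical shift. So the contact isotopy you get is the identity, and any other solution differs from it by a contact isotopy with no reason to preserve $\Sigma_0$. The real obstruction is therefore not behaviour at the boundary or boundary-parallel twists; it is that the two surfaces are not contactomorphic.

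Reducing to a single surface is also the wrong target. Two disjoint Legendrian knots in the class $\aa$ on the same graph $\Sigma_1$ would cobound an annulus $A$, and Stokes would give $0=\int_{\partial A}(\beta+df_1)=\int_A d\beta\neq 0$. So on one surface there is essentially no room to move.

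\textbf{What actually has to be proved.} The two knots must be connected through Legendrian knots lying on varying surfaces transverse to $\partial_z$. Your graph picture gives a clean formulation:
\begin{itemize}
\item A Legendrian knot on the graph of $f$ is the Legendrian lift of its projection $\gamma=\pi(L)$, which satisfies $\int_\gamma\beta=0$.
\item This $\gamma$ lies in $\aa$. Your observation that $\pi\circ\phi_t$ is an isotopy of $\Sigma$ fixed near $\partial\Sigma$ is correct and settles the monodromy worry.
\item Conversely, a path of embedded curves $\gamma_t$ with $\int_{\gamma_t}\beta=0$ lifts to a Legendrian isotopy.
\end{itemize}
The theorem therefore amounts to joining $\pi(L_0)$ and $\pi(L_1)$ by curves in $\aa$ while keeping $\int\beta=0$; this is where non-isolation must enter. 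Your outline never proves this. The parametric flexibility in Step~2 would have to be redone for moving surfaces. Moreover, Theorem~\ref{thm:neigh_thm} has no parametric content, so it does not give the claimed continuity in $t$.

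\textbf{How the paper avoids this.} It never places the knots on a common surface. It pulls $\alpha$ back along an admissible isotopy to get a path of compatible forms on the fixed $\Sigma\times\R$. The ``two isotopies inducing the same contact structures'' issue concerns $\phi_t$ versus the Moser isotopy $\widetilde\phi_t$ of that same path. Their discrepancy $\widetilde\phi_t\circ\phi_t^{-1}$ is a contact isotopy, so it moves $L$ through Legendrian knots. Two Moser realizations are then compared using two ingredients: the space of compatible forms is connected and simply connected (Lemma~\ref{lem:Ccontra}), and the cited lemmas keep a curve, or a path of curves in $\aa$, Legendrian along a path of compatible forms.
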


\begin{remark}
    Let $\Sigma$ be a compact, oriented surface in a contact manifold $(M,\xi)$. Assume that $\Sigma$ has nonempty positive transverse boundary and is transverse to a Reeb vector field $R_\alpha$ (for example, $\Sigma$ is a ribbon surface). Consider a neighborhood $\Sigma\times[-\varepsilon,\varepsilon]$ obtained by flowing $\Sigma$ using $R_\alpha$. Then Lemma~\ref{lem:ribbon_convex} shows that $\Sigma$ is also convex with respect to a contact vector field $X$ that makes the dividing set $\Gamma_\Sigma$ equal to $\partial \Sigma$. 
    
    Moreover, by carefully analyzing the proof of Lemma~\ref{lem:ribbon_convex} we can see that we are free to choose any small enough collar neighborhood $A$ of $\partial\Sigma$ and then construct $X$ so that 
    \begin{itemize}
        \item $X$ is vertically invariant;
        \item $X_p\in T_p(\partial\Sigma\times[-\varepsilon,\varepsilon])$ for all $p\in\partial\Sigma$;
        \item $X=\partial_z$ on $(\Sigma\setminus A)\times\R$. 
    \end{itemize}
    This in particular tells us that, given a neighborhood of $\Sigma$ obtained by flowing $\Sigma$ with $X$, it contains a neighborhood of $\Sigma$ obtained by flowing $\Sigma$ with $R_\alpha$ and vice versa. Moreover, given a Legendrian realization $(L,\phi_t)$ with respect to $R_\alpha$, we can find such a contact vector field $X$ so that $(L,\phi_t)$ is also a Legendrian realization for $X$.

    We can probably say much more about this relation, but since we do not need it we stop here.
\end{remark}

Before proving Theorem \ref{thm:uniqueness_ribbon} we need a couple more definitions and a Lemma.

We say that an isotopy $\phi_t$ of $(\Sigma\times\R,\xi)$ is a \emph{Moser isotopy} if it is obtained by applying the Moser trick to a family $\alpha_t$ of contact forms defining $\xi_t:=T\phi_t^{-1}(\xi)$. More precisely, we are assuming that the Moser trick applied to $\alpha_t$ gives an isotopy $\widetilde{\phi}_t$ such that $\xi_t=T\widetilde{\phi}_t(\xi_0)$, and we are imposing that $\widetilde{\phi}_t=\phi_t^{-1}$.

It is important to note that not all isotopies are Moser isotopies. For example, a contact isotopy $\phi_t$ of $\xi$ will never be of this form, as in this case the path $\xi_t$ would be constant and so $\widetilde{\phi}_t=\mathrm{Id}$.

We say that a contact form $\alpha$ is \emph{compatible} with $\Sigma\times \R$ if 
\begin{enumerate}
    \item $\alpha$ is vertically invariant;
    \item $R_\alpha$ is positively transverse to $\Sigma$;
    \item $\partial\Sigma$ is a positive transverse link for $\ker\alpha$.
\end{enumerate}

If we are in the hypothesis of Theorem \ref{thm:uniqueness_ribbon}, then the contact form $\alpha$ (whose Reeb vector field is $\partial_z$) is clearly compatible with $\Sigma\times I$.

The following is an obvious adaptation of Etnyre's proof of Giroux's intuition that two contact structures compatible with the same open book are isotopic \cite{Giroux02,Etnyre06}. More precisely, the space $C$ of contact forms compatible with an open book is connected and simply connected, and the same holds in our case.

\begin{lemma}\label{lem:Ccontra}
Let $\Sigma$ be an oriented, compact surface with $\partial\Sigma\neq\emptyset$. The space $C$ of contact forms compatible with $\Sigma\times\R$ is connected and simply connected.
\end{lemma}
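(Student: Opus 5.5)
The plan is to adapt Etnyre's argument for open books \cite{Etnyre06}: describe $C$ as a convex-type set of pairs of data on $\Sigma$, then use linear interpolation. A vertically invariant form on $\Sigma\times\R$ has the shape
\[
\alpha = \beta + u\,dz, \qquad \beta\in\Omega^1(\Sigma),\ u\in C^\infty(\Sigma).
\]
In these terms the three conditions defining compatibility become conditions on the pair $(\beta,u)$. Positive transversality of $R_\alpha$ to $\Sigma$ means $d\beta>0$ on $\Sigma$. The contact condition reads $u\,d\beta + \beta\wedge du>0$. Positive transversality of $\partial\Sigma$ means $\beta|_{T\partial\Sigma}>0$.

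To see that the Reeb condition reduces to $d\beta>0$, I would argue as follows. Since $R_\alpha$ is vertically invariant, $d\alpha=d\beta+du\wedge dz$ restricted to $T\Sigma$ is $d\beta$. Transversality of $R_\alpha$ to $\Sigma$ is then equivalent to $d\alpha|_{T\Sigma}$ being nondegenerate, which here means $d\beta|_{T\Sigma}\neq 0$. The sign comes from the orientation convention.

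The first key step is to handle the contact condition. Once $d\beta>0$ and $\partial\Sigma$ is positively transverse, I would show that the set of $u$ satisfying the contact condition is convex in $u$ for fixed $\beta$, since the condition is linear in $u$. It is also nonempty: $u$ equal to a large constant works away from a collar, and near $\partial\Sigma$ one uses the positivity of $\beta$ on $\partial\Sigma$, exactly as in the proof of Lemma~\ref{lem:ribbon_convex}. The difficulty is that the pair condition is not jointly convex in $(\beta,u)$. I would handle this with Etnyre's trick. Given a family $\alpha_s=\beta_s+u_s\,dz$ parametrized by a sphere $S^k$ ($k=0,1$), I first interpolate the $\beta_s$ linearly to a fixed $\beta_0$. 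This is legitimate because $d\beta>0$ and $\beta|_{\partial\Sigma}>0$ are both convex conditions. Along this homotopy I simultaneously replace $u_s$ by $u_s+K$ for a large constant $K$; the contact condition $(u+K)\,d\beta+\beta\wedge du>0$ then holds uniformly, by compactness of $\Sigma\times S^k\times[0,1]$. Before doing so, I first homotope $u_s$ to $u_s+K$ through $u_s+tK$. This stays in $C$ because adding $tK\,d\beta>0$ only helps.

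Concretely, the homotopy runs in three stages: $\alpha_s \to \beta_s+(u_s+K)\,dz \to \beta_0+(u_{s}+K)\,dz \to \beta_0+K\,dz$. The last stage interpolates $u$ linearly with $\beta_0$ fixed, which is convex. This contracts any $S^0$ or $S^1$ family, giving connectivity and simple connectivity. Contractibility would follow as well.

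The main obstacle is the boundary. There $d\beta>0$ alone does not make $\beta\wedge du$ harmless, and it must be checked that adding $K$ dominates uniformly up to $\partial\Sigma$. Since $\Sigma$ is compact and $d\beta_s>0$ on the closed surface, $K\,d\beta$ dominates $\beta\wedge du$ everywhere, so I expect this to work. The remaining care is that during the $\beta$-interpolation the boundary positivity is preserved, which holds by convexity.
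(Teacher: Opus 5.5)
Your proposal is correct and follows essentially the paper's argument. You first push the family along the ray $\alpha+t\,dz$, which stays in $C$. You then contract by convex interpolation, using one large constant chosen via compactness of $\Sigma$ and of the parameter sphere so that $K\,d\beta$ dominates $\beta\wedge du$ and the bounded $u$.

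The only difference is cosmetic. You interpolate $\beta$ to a fixed $\beta_0$ and then $u$ to the constant $K$, using linearity of the contact condition in $u$; the paper interpolates the whole pushed-up form to a basepoint form in one step. Your collar remark for nonemptiness is unnecessary, since any positive constant $u$ already works once $d\beta>0$.
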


\begin{proof}
Since we only consider $z$–invariant forms, any $\alpha\in C$ can be written
uniquely as
\[
  \alpha = \lambda + h\,dz,
\]
where $\lambda$ is the pullback of a $1$–form on $\Sigma$ and $h:\Sigma\to\R$ is a
function, both independent of $z$.

\medskip\noindent
\textbf{Step 1. Characterization of $C$.}

For a contact form $\alpha$, the following are clearly equivalent:
\begin{enumerate}
  \item[(a)] $\alpha$ is a positive contact form and $R_\alpha$ is positively
  transverse to $\Sigma$.
  \item[(b)] $\alpha$ is a positive contact form and $d\alpha|_{T(\Sigma)}$
  is a positive area form on each slice.
\end{enumerate}

This gives a different way of characterizing forms in $C$.

\medskip\noindent
\textbf{Step 2. The ray $\alpha+t\,dz$ stays inside $C$.}

If $\alpha\in C$ and $t\ge0$, then $\alpha_t:=\alpha+t\,dz$ also lies in $C$.

Indeed, vertical invariance is clear. Since $d(\alpha_t)=d\alpha$, we still have
$d\alpha_t|_{T\Sigma}=d\alpha|_{T\Sigma}$, which is a positive area form because $d\alpha$ is.

On $\partial\Sigma$, a positive tangent vector $v$ is horizontal, so $dz(v)=0$ and
\[
  \alpha_t(v)=\alpha(v)>0.
\]
Thus the boundary remains a positive transverse link.

To check the contact condition, compute
\[
  \alpha_t\wedge d\alpha_t
  = \alpha_t\wedge d\alpha
  = \alpha\wedge d\alpha + t\,dz\wedge d\alpha.
\]
The first term on the right is clearly positive since $\alpha$ is a contact form. The second term is also nonnegative, since $d\alpha$ is a volume form on the pages, $dz$ vanishes on the pages, and $dz$ is positive on the oriented normal to the pages. Hence $\alpha_t\wedge d\alpha_t$ is a positive volume form for all $t\ge 0$.

So for each $\alpha\in C$, the ray $\{\alpha+t\,dz\mid t\ge0\}$ lies entirely in $C$.

\medskip\noindent
\textbf{Step 3. Convex combinations after adding a large multiple of $dz$.}

Let $\alpha_0,\alpha_1\in C$ and write
\[
  \alpha_i = \lambda_i + h_i\,dz,\qquad i=0,1,
\]
with $z$–independent $\lambda_i$ and $h_i$.

Fix $R>0$ and set
\[
  \beta_i := \alpha_i + R\,dz = \lambda_i + (h_i+R)\,dz,\qquad i=0,1.
\]
By Step~2, $\beta_i\in C$ for all $R\ge0$.

For $s\in[0,1]$, define the convex combination
\[
  \beta_s := (1-s)\beta_0 + s\beta_1
           = \lambda_s + (h_s+R)\,dz,
\]
where
\[
  \lambda_s := (1-s)\lambda_0 + s\lambda_1,\qquad
  h_s := (1-s)h_0 + s h_1.
\]

We now show that for $R$ sufficiently large, every $\beta_s$ lies in $C$.

\smallskip\noindent
\emph{Boundary condition.}
Along $\partial\Sigma$ we have $dz(v)=0$ for tangent vectors $v$, so
\[
  \beta_s(v) = (1-s)\alpha_0(v) + s\alpha_1(v),
\]
which is positive since each $\alpha_i(v)>0$. Thus the boundary remains a positive
transverse link for all $s$.

\smallskip\noindent
\emph{Positivity of $d\beta_s$.}
On $\Sigma$, we have
\[
  d\beta_s|_{T\Sigma} = d\lambda_s = (1-s)d\lambda_0 + s d\lambda_1.
\]
Each $d\lambda_i$ is a positive area form, so their convex combination $d\lambda_s$ is
also a positive area form. In particular, there exists a smooth positive function
$f_s$ such that
\[
  d\lambda_s = f_s\,\omega
\]
for some fixed background area form $\omega$ on $\Sigma$. By compactness of
$\Sigma\times[0,1]$, there exists $m>0$ with $f_s\ge m$ everywhere, so
\[
  d\lambda_s \ge m\,\omega
\]
pointwise for all $s$.

\smallskip\noindent
\emph{Contact condition for large $R$.}
Compute
\[
  d\beta_s = d\lambda_s + dh_s\wedge dz,
\]
and hence
\[
  \beta_s\wedge d\beta_s
  = \lambda_s\wedge d\lambda_s
    + \lambda_s\wedge dh_s\wedge dz
    + (h_s+R)\,dz\wedge d\lambda_s.
\]
Write this as
\[
  \beta_s\wedge d\beta_s = A_s + (h_s+R)\,B_s,
\]
where $B_s:=dz\wedge d\lambda_s$ is a positive volume form (since $d\lambda_s$ is a
positive area form) and $A_s:=\lambda_s\wedge d\lambda_s+\lambda_s\wedge dh_s\wedge dz$.

Consider the volume form $\Omega=\omega\wedge dz$ on $\Sigma\times\R$. By compactness of
$\Sigma$ and the fact that everything is vertically invariant, there exists $C>0$ such that
\[
  |A_s|\le C \Omega
\]
for all $s$.\footnote{This notation is not really valid, because in general there is no total order on volume forms, but it is clear what it means in this case.} Since $h_s$ is a convex combination of the two fixed functions $h_0,h_1$,
there exists $H>0$ with $|h_s|\le H$ for all $s$.

On the other hand $d\lambda_s\ge m\omega$ implies
\[
  B_s = dz\wedge d\lambda_s \ge m\,\omega\wedge dz = m\,\Omega.
\]
Therefore
\[
  (h_s+R)\,B_s \ge (R-H)\,m\,\Omega.
\]

Combining the estimates, we get
\[
  \beta_s\wedge d\beta_s
  \ge -C\,\Omega + (R-H)m\,\Omega
  = \bigl((R-H)m - C\bigr)\,\Omega.
\]
If we choose $R$ so large that
\[
  (R-H)m - C > 0,
\]
then $\beta_s\wedge d\beta_s$ is a positive volume form everywhere for all $s\in[0,1]$.
Thus each $\beta_s$ is a positive contact form.

We have already seen that $d\beta_s|_{T\Sigma}$ is a positive area form, so by the
characterization above the Reeb field $R_{\beta_s}$ is positively transverse to
$\Sigma$. Together with the boundary condition, and the fact that all these contact forms are obviously vertically invariant, this shows $\beta_s\in C$
for all $s\in[0,1]$.

In particular, for any $\alpha_0,\alpha_1\in C$ there exists $R\ge0$ such that the path
\[
  s\longmapsto (1-s)(\alpha_0+Rdz)+s(\alpha_1+Rdz)
\]
lies entirely in $C$. This is exactly what we have just proved.

\medskip\noindent
\textbf{Step 4. Path–connectedness of $C$.}

Let $\alpha_0,\alpha_1\in C$. Choose $R\ge0$ as in Step~3 and set
$\beta_i=\alpha_i+Rdz$.

Consider the following concatenated path in $C$:
\begin{enumerate}
  \item From $\alpha_0$ to $\beta_0$ along the ray
  \[
    t\longmapsto \alpha_0+tR\,dz,\qquad t\in[0,1],
  \]
  which lies in $C$ by Step~2.
  \item From $\beta_0$ to $\beta_1$ along the convex path
  \[
    s\longmapsto \beta_s=(1-s)\beta_0+s\beta_1,\qquad s\in[0,1],
  \]
  which lies in $C$ by Step~3.
  \item From $\beta_1$ to $\alpha_1$ along
  \[
    t\longmapsto \alpha_1 + (1-t)R\,dz,\qquad t\in[0,1],
  \]
  again in $C$ by Step~2.
\end{enumerate}
Thus any two points of $C$ can be joined by a path in $C$, so $C$ is path–connected.

\medskip\noindent
\textbf{Step 5. Simple connectedness of $C$.}

Let $\{\alpha_\theta\}_{\theta\in S^1}\subset C$ be a continuous loop. We will show it is
null–homotopic in $C$.

\smallskip\noindent
\emph{Stage 1: push the loop in the $dz$–direction.}
For each $\theta\in S^1$ and $t\in[0,1]$ define
\[
  \alpha_{\theta,t} := \alpha_\theta + tR\,dz.
\]
By Step~2, $\alpha_{\theta,t}\in C$ for all $(\theta,t)$. For $t=0$ we recover the
original loop, and for $t=1$ we get the loop
\[
  \theta\longmapsto \beta_\theta:=\alpha_\theta+Rdz.
\]
This gives a homotopy in $C$ from $\{\alpha_\theta\}$ to $\{\beta_\theta\}$.

\smallskip\noindent
\emph{Stage 2: contract the pushed–up loop by convex combinations.}
Fix some basepoint $\theta_0\in S^1$ and consider
\[
  H(\theta,s) := (1-s)\,\beta_\theta + s\,\beta_{\theta_0},\qquad
  (\theta,s)\in S^1\times[0,1].
\]
For each fixed $\theta$, this is exactly the kind of convex path treated in
Step~3, now with parameters $(\theta,s)$.

The estimates in Step~3 depend continuously on $\theta$ and $s$, and
$S^1$ is compact. Thus there exists a single $R>0$ such that for this $R$ all forms $H(\theta,s)$
lie in $C$. For $s=0$ we recover $\beta_\theta$, and for $s=1$ we have the constant
loop $\theta\mapsto\beta_{\theta_0}$. Hence $\{\beta_\theta\}$ is null–homotopic in $C$.

Combining Stages~1 and~2, we obtain a homotopy in $C$ from the original loop
$\{\alpha_\theta\}$ to a constant loop. Thus every loop in $C$ is null–homotopic and
$\pi_1(C)=0$.

\medskip\noindent
Therefore $C$ is both connected and simply connected.
\end{proof}

A good part of the argument in the following proof comes from \cite{MR2557137}.

\begin{proof}[Proof of Theorem \ref{thm:uniqueness_ribbon}]
We need to show that two Legendrian realizations $L$ and $L'$ of $\aa$ are Legendrian isotopic. That is, we need to find an isotopy $\psi_t:S^1\rightarrow\Sigma\times I$, $t\in[0,1]$, such that 
\begin{itemize}
    \item $\psi_0(S^1)=L$;
    \item $\psi_1(S^1)=L'$;
    \item $\psi_t(S^1)$ is Legendrian for all $t$.
\end{itemize}

To achieve this, we will proceed as follows. First, if we are working on $(\Sigma\times[-\varepsilon,\varepsilon],\xi=\ker\alpha)$, then we think of this manifold as being a subset of $\Sigma\times\R$, where $\alpha$ is extended by imposing it to be vertically invariant. Next we will find an (ambient) isotopy of $\Sigma\times\R$ that sends $L$ to $L'$ through Legendrian knots. By restricting the isotopy to $L$, this gives the desired Legendrian isotopy. Of course, in the case where the original manifold was $\Sigma\times[-\varepsilon,\varepsilon]$, we need to ensure that the image of $L$ under this isotopy is always contained in $\Sigma\times[-\varepsilon,\varepsilon]$.

If $\phi_t$ is an admissible isotopy of $\Sigma$ in $\Sigma\times\R$ as in Definition \ref{def:leg_real}, we can extend it to an (ambient) isotopy, still denoted $\phi_t$, of $\Sigma\times\R$ by simply setting
\[
  \phi_t(x,z)=(\phi_t(x)_\Sigma,\phi_t(x)_\R+z),
\]
where $\phi_t(x)_\Sigma$ and $\phi_t(x)_\R$ are the coordinates of $\phi_t(x)\in\Sigma\times\R$. We will always do this and still call it $\phi_t$. (We will sometimes abuse notation and infer properties of the ambient isotopy from properties of the isotopy of $\Sigma$, while calling both by the same name. It will be clear from the context which one we are referring to.)

Note that if $\phi_t$ is an admissible isotopy (we now think of these as ambient isotopies as explained above), and we consider the family of contact forms
\[
  \alpha_t:=T\phi_t^{-1}(\alpha),
\]
then each $\alpha_t$ is compatible with $\Sigma\times\R$. This is true because:
\begin{itemize}
    \item condition (1) in the definition of compatibility is guaranteed by the fact that both $\phi_t$ and $\alpha$ are vertically invariant;
    \item condition (2) is guaranteed by the fact that $R_{\alpha_t}=T\phi_t^{-1}(\partial_z)$, and this is transverse to $\Sigma$ because $\phi_t(\Sigma)$ is transverse to $\partial_z$;
    \item condition (3) is guaranteed by the fact that each $\alpha_t$ agrees with $\alpha$ on $A\times\R$, where $A$ is a neighborhood of $\partial\Sigma$ on which $\phi_t$ is the identity.
\end{itemize}
Conversely, given a path $\alpha_t$ of contact forms compatible with $\Sigma\times\R$, we have that:
\begin{itemize}
    \item we can apply the Moser trick to them and obtain an isotopy $\phi_t$ (this is not automatic, as $\Sigma\times\R$ is not compact). It follows from the fact that the time–dependent vector field defining $\phi_t$ is vertically invariant;
    \item the isotopy $\phi_t$ is vertically invariant, i.e.
    \[
      \phi_t(x,z)=(\phi_t(x,0)_\Sigma,\phi_t(x)_\R+z);
    \]
    \item $\phi_t(\Sigma)$ is transverse to $\partial_z$.
\end{itemize}
These properties are all consequences of the fact that each $\alpha_t$ is vertically invariant.

Given all of this, it is easy to check that there is a surjection between:
\begin{itemize}
    \item the space of paths $\alpha_t$ of contact forms compatible with $\Sigma\times\R$
\end{itemize}
onto:
\begin{itemize}
    \item the space of vertically invariant Moser isotopies $\phi_t$ such that $\phi_t(\Sigma)$ is transverse to $\partial_z$.
\end{itemize}
It is then clear why having a control over the first space (given by Lemma~\ref{lem:Ccontra}) has consequences over the second space.

In the following, every time we say that an isotopy $\phi_t$ is a Moser isotopy, we are also implicitly assuming that the Moser trick was applied to the specific path of contact forms $\alpha_t:=T\phi_t^{-1}(\alpha)$ (in principle, two different paths of contact forms for the same path of contact structures might yield different isotopies).

\medskip\noindent
\textbf{(a)} Assume first that $(L,\phi_t)$ and $(L',\phi_t')$ are two Legendrian realizations of $\aa$ such that $\phi_t$ and $\phi_t'$ are Moser isotopies. Then $L$ and $L'$ are Legendrian isotopic. Call $\alpha_t$ and $\alpha_t'$ the paths of contact forms associated to $\phi_t$ and $\phi_t'$. Note that $\alpha_0=\alpha_0'=\alpha$.

The proof of this claim follows as in the proof of \cite[Theorem~2.7]{MR2557137}. The key ingredient is Lemma~\ref{lem:Ccontra}. 

The steps of the proof are the following. We do not give the details as they are identical to those in the reference given. We, however, stress the key ingredients so that one can check that everything works in our setting.

\smallskip\noindent
(a.1) Let $a$ be a curve whose isotopy class is $\aa$, and let $\phi_t$ and $\phi_t'$ be two admissible Moser isotopies such that $\phi_1(a)$ and $\phi_1'(a)$ are Legendrian.

\smallskip\noindent
(a.1.1) Assume that $\alpha_1=\alpha_1'$ (so, in particular, we are assuming that $\xi_1=\xi_1'$). Then $\phi_1(a)$ is Legendrian isotopic to $\phi_1'(a)$. This follows from the fact that $C$ is simply connected and the fact that the contact forms in the paths $\alpha_t$ and $\alpha_t'$ are all compatible with $\Sigma\times\R$. More precisely, there exists a path $\alpha_t^s$ of paths such that
\begin{itemize}
    \item $\alpha_t^0=\alpha_t$,
    \item $\alpha_t^1=\alpha_t'$,
    \item $\alpha_0^s=\alpha$, and
    \item $\alpha_1^s=\alpha_1=\alpha_1'$.
\end{itemize}
This gives rise to a path of Moser isotopies that achieves what we want.

\smallskip\noindent
(a.1.2) If $\alpha_1\neq\alpha_1'$, the proof that $\phi_1(a)$ is Legendrian isotopic to $\phi_1'(a)$ uses the fact that there exists a path $\widetilde{\alpha}_t$ of contact forms compatible with $\Sigma\times\R$ such that 
\begin{itemize}
    \item $\widetilde{\alpha}_0=\alpha_1$;
    \item $\widetilde{\alpha}_1=\alpha_1'$;
    \item the curve $a$ is Legendrian for all the $\widetilde{\alpha}_t$.
\end{itemize}
This is shown in \cite[Lemma~2.9]{MR2557137}.

In practice, one considers the path from $\alpha$ to $\alpha_1'$ obtained by concatenating a path from $\alpha$ to $\alpha_1$ with $\widetilde{\alpha}_t$. The Moser isotopy associated to this path first makes $a$ into $\phi_1(a)$ and then transforms $\phi_1(a)$ by a Legendrian isotopy to a knot Legendrian isotopic to $\phi_1'(a)$ (by step (a.1.1)).

\smallskip\noindent
(a.2) Let $a_t$ be a path of curves all representing the homology class determined by $\aa$. Assume that $\phi_t$ and $\phi_t'$ are admissible Moser isotopies such that $\phi_1(a_0)$ and $\phi_1'(a_1)$ are Legendrian. Then $\phi_1(a_0)$ and $\phi_1'(a_1)$ are Legendrian isotopic. 

The proof of this relies on the fact that there exists a path of contact forms $\widetilde{\alpha}_t$ compatible with $\Sigma\times\R$ such that $a_t$ is Legendrian for $\widetilde{\alpha}_t$ \cite[Lemma~2.8]{MR2557137}. We can now consider a path $\alpha_t$ from $\alpha$ to $\alpha_1=\widetilde{\alpha}_0$ concatenated with the path $\widetilde{\alpha}_t$. The Moser isotopy associated to this path first makes $a_0$ into a Legendrian knot $L$ that is Legendrian isotopic to $\phi_1(a_0)$ by (a.1.1), and then Legendrian isotopes $L$ to a Legendrian knot that is Legendrian isotopic to $\phi_1'(a_1)$ again by (a.1.1), proving the theorem in this case.

\medskip\noindent
\textbf{(b)} Next assume that $(L,\phi_t)$ is any type of Legendrian realization (that is, we are not assuming that $\phi_t$ is a Moser isotopy). We will show that $L$ is Legendrian isotopic to a Legendrian realization coming from a Moser isotopy, thereby reducing the general case to (a).

Consider the path of contact forms
\[
  \alpha_t := T\phi_t^{-1}(\alpha)
\]
and the corresponding path of contact structures $\xi_t:=\ker\alpha_t$ on $\Sigma\times\R$. Let us write $\xi_0=\xi$.

By the Moser trick, there exists a Moser isotopy
\[
  \widetilde{\phi}_t : \Sigma\times\R \to \Sigma\times\R,
  \qquad \widetilde{\phi}_0 = \mathrm{id},
\]
such that
\[
  T\widetilde{\phi}_t^{-1}(\xi_0) = \xi_t
  \quad\text{for all } t.
\]

Recall that $\widetilde{\phi}_t$ exists because each $\alpha_t$ is vertically invariant and so the time–dependent vector field given by the Moser trick is also vertically invariant and thus can be integrated. Moreover, note that $\widetilde{\phi}_t$ is admissible because the path $\alpha_t$ is compatible with $\Sigma\times\R$ and all $\alpha_t$ coincide with $\alpha$ on $A\times\R$, where $A$ is a neighborhood of $\partial\Sigma$ on which $\phi_t$ is the identity. This last condition ensures that $\widetilde{\phi}_t$ is the identity on $A\times\R$ for all $t$.

By definition of $\xi_t$ and of $\widetilde{\phi}_t$, we have
\[
  T \phi_t^{-1}(\xi_0) = \xi_t = T \widetilde{\phi}_t^{-1}(\xi_0)
  \quad\text{for all } t\in[0,1].
\]
The “difference” isotopy
\[
  F_t := \widetilde{\phi}_t \circ \phi_t^{-1}
\]
is then a contact isotopy of $(\Sigma\times\R,\xi_0)$, because
\[
  T F_t(\xi_0)
  = T(\widetilde{\phi}_t\circ \phi_t^{-1})(\xi_0)
  = T \widetilde{\phi}_t(\xi_t)
  = \xi_0.
\]
Hence $F_t$ is a path of contactomorphisms of $(\Sigma\times\R,\xi_0)$, with $F_0 = \mathrm{id}$.

Let $a := \phi_1^{-1}(L) \subset \Sigma$. By assumption $(L,\phi_t)$ is a Legendrian realization of $\aa$, so $a$ is a curve on $\Sigma$ representing the class $\aa$. Consider the knot
\[
  L' := \widetilde{\phi}_1(a) \subset \widetilde{\phi}_1(\Sigma).
\]
Then:
\begin{itemize}
    \item $L'$ is Legendrian because $L'=F_1(L)$ and $F_1$ is a contactomorphism. Thus $(L',\widetilde{\phi}_t)$ is a Legendrian realization of $\aa$ associated to the Moser isotopy $\widetilde{\phi}_t$.
    \item Moreover, since $F_t$ is a contact isotopy of $(\Sigma\times\R,\xi_0)$ with $F_0=\mathrm{id}$, the path $t\mapsto F_t(L)$ gives a Legendrian isotopy from $L$ to $L'$.
\end{itemize}

Thus any Legendrian realization $(L,\phi_t)$ is Legendrian isotopic to a Legendrian realization $(L',\widetilde{\phi}_t)$ whose isotopy $\widetilde{\phi}_t$ is a Moser isotopy. 

Finally, a careful (but not difficult) analysis of all these isotopies shows that in all cases the Legendrian isotopy between $L$ and $L'$ stays inside $\Sigma\times[-\varepsilon,\varepsilon]$ if we started with Legendrian realizations in this manifold.

Combined with (a), this implies that any two Legendrian realizations of $\aa$ lying in the same vertically invariant neighborhood $\Sigma\times\R$ are Legendrian isotopic.
\end{proof}

\subsubsection{Uniqueness of the Legendrian realization for open book decompositions}

Notice that the open book setting is very similar to our setting.

\begin{definition}\label{def:admiss_is_op}
    Let $(M,\xi)$ be a contact manifold and let $(B,\pi)$ be an open book compatible with $\xi$.
    
    In this setting, we call an isotopy $\phi_t$ of $M$ \emph{admissible} if
    \begin{itemize}
        \item $\phi_t$ is the identity on a neighborhood of $B$;
        \item there exists a contact form $\alpha$ for $\xi$ such that
        \begin{itemize}
            \item $\alpha$ is compatible with $(B,\pi)$ in the sense that the Reeb vector field $R_\alpha$ is always positively transverse to the pages and positively tangent to $B$;
            \item for each $z\in S^1$, the surface $\phi_t(\pi^{-1}(z))$ is always transverse to $R_\alpha$.
        \end{itemize}
    \end{itemize}

    Let $\aa$ be the isotopy class of a simple closed curve on a page $\Sigma$ of $(B,\pi)$. We say that a Legendrian knot $L\subset M\setminus B$ is a \emph{Legendrian realization} of $\aa$ if there is an admissible isotopy $\phi_t$ such that $L\subset\phi_1(\Sigma)$ and the isotopy class of $\phi_1^{-1}(L)$ on $\Sigma$ coincides with $\aa$.
\end{definition}

Let $\phi_t$ be an admissible isotopy of $(M,(B,\pi),\xi)$ as in Definition~\ref{def:admiss_is_op}, and let $\alpha$ be the corresponding contact form. We say that $\phi_t$ is a \emph{Moser isotopy} if it is obtained by applying the Moser trick to the path of contact forms
\[
  \alpha_t := T\phi_t^{-1}(\alpha).
\]

In this language, in \cite[Theorem~2.7]{MR2557137} it is proved that, given a homologically nontrivial isotopy class $\aa$ of a simple closed curve on a page $\Sigma$, we have:
\begin{itemize}
    \item there exists a Legendrian realization of $\aa$ given by a Moser isotopy;
    \item any two Legendrian realizations of $\aa$ given by a Moser isotopy are Legendrian isotopic.
\end{itemize}

The proof uses the fact that the isotopies are Moser because it uses that the space of contact forms compatible with a given open book is convex \cite{Giroux02,Etnyre06}.
    
However, a simple adaptation of the argument in part~(b) of the proof of Theorem~\ref{thm:uniqueness_ribbon}, together with the proof of \cite[Theorem 2.7]{MR2557137}, shows the following:

\begin{theorem}[Uniqueness of Legendrian knots on open books]\label{thm:uniqueness_OB}
    Let $(B,\pi)$ be an open book supporting a contact manifold $(M,\xi)$. Let $\Sigma$ be a page and let $\aa$ be the isotopy class of a homologically nontrivial simple closed curve on $\Sigma$. Then:
    \begin{itemize}
        \item There exists a Legendrian knot $L$ embedded on a page of an open book $(B',\pi')$ supporting $\xi$ such that
        \begin{itemize}
            \item $(B,\pi)$ and $(B',\pi')$ are isotopic through open books compatible with the same contact form $\alpha$ for $\xi$;
            \item if we denote by $\phi_t$ the isotopy between $(B,\pi)$ and $(B',\pi')$, then $\phi_1^{-1}(L)\subset \Sigma$ defines the same isotopy class $\aa$.
        \end{itemize}
        \item If $L'$ is another Legendrian knot that is embedded on a page of an open book $(B'',\pi'')$ such that
        \begin{itemize}
            \item $(B,\pi)$ and $(B'',\pi'')$ are isotopic through open books compatible with the same contact form $\alpha'$ for $\xi$;
            \item if we denote by $\phi_t'$ the isotopy between $(B,\pi)$ and $(B'',\pi'')$, then $\phi_1'^{-1}(L')\subset \Sigma$ defines the same isotopy class $\aa$,
        \end{itemize}
        then $L'$ is Legendrian isotopic to $L$.
    \end{itemize}
\end{theorem}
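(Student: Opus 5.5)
For existence I will use \cite[Theorem~2.7]{MR2557137}, which in the language of Definition~\ref{def:admiss_is_op} gives a Legendrian realization $(L,\phi_t)$ of $\aa$ with $\phi_t$ a Moser isotopy. I will then check that this is the open book statement required. The isotopy is obtained by the Moser trick from a path $\alpha_t=T\phi_t^{-1}(\alpha)$ of forms compatible with $(B,\pi)$. Hence $\phi_t$ carries $(B,\pi)$ through open books $(\phi_t(B),\pi\circ\phi_t^{-1})$. These are all compatible with $\alpha$: the Reeb field of $\alpha$ pulled back by $\phi_t$ is the Reeb field of $\alpha_t$, which is transverse to the pages of $(B,\pi)$ and tangent to $B$. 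Setting $(B',\pi')=\phi_1(B,\pi)$ gives the first bullet, since $\phi_1^{-1}(L)\subset\Sigma$ represents $\aa$ by construction.

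For uniqueness I will mirror the proof of Theorem~\ref{thm:uniqueness_ribbon}. The plan has two steps: first show that every realization is Legendrian isotopic to one coming from a Moser isotopy, then apply the Moser uniqueness part of \cite[Theorem~2.7]{MR2557137}.

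\emph{Reduction to Moser isotopies.} Let $(L',\phi'_t)$ be a realization through open books compatible with $\alpha'$. Set $\alpha'_t:=T\phi_t'^{-1}(\alpha')$. Each $\alpha'_t$ is compatible with $(B,\pi)$: its Reeb field is $T\phi_t'^{-1}(R_{\alpha'})$, which is transverse to the pages of $(B,\pi)$ because $\phi'_t(\text{pages})$ is transverse to $R_{\alpha'}$. Moreover $\alpha'_t$ equals $\alpha'$ near $B$, since $\phi'_t$ is the identity there. The manifold $M$ is closed, so Gray--Moser applies directly, with no vertical invariance needed. It gives $\widetilde\phi_t$ with $T\widetilde\phi_t^{-1}(\xi)=\ker\alpha'_t$, and $\widetilde\phi_t$ is the identity near $B$. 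Then $F_t:=\widetilde\phi_t\circ\phi_t'^{-1}$ is a contact isotopy of $(M,\xi)$, exactly as in part~(b) of that proof. Consequently $F_t(L')$ is a Legendrian isotopy from $L'$ to $\widetilde\phi_1(\phi_1'^{-1}(L'))$, and the latter is a Moser realization of $\aa$ with respect to $\alpha'$.

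\emph{Comparing the two Moser realizations.} This is the main obstacle. The realization $L$ was built from $\alpha$ and the new one from $\alpha'$, while \cite[Theorem~2.7]{MR2557137} compares Moser realizations built from a single base form. I will resolve this using convexity of the space of forms compatible with $(B,\pi)$ \cite{Giroux02,Etnyre06}. Take the straight path $\beta_s$ from $\alpha$ to $\alpha'$; every $\beta_s$ is compatible with $(B,\pi)$. Its Moser isotopy $\psi_s$ is a contact isotopy from $(M,\ker\alpha)$ to itself only after composing with Gray's isotopy, but since $\ker\alpha=\ker\alpha'=\xi$ that is not what I need. Instead, I will concatenate $\beta_s$ with the path $\alpha'_t$ to obtain a single compatible path of forms starting at $\alpha$. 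Running the argument of (a.1.1)--(a.2) in the proof of Theorem~\ref{thm:uniqueness_ribbon}, with the simple connectivity of Lemma~\ref{lem:Ccontra} replaced by the contractibility of the convex set of compatible forms, should then identify both realizations up to Legendrian isotopy. Along this concatenated path, the $\beta_s$ portion moves only forms with constant kernel $\xi$, so its Moser flow is by contactomorphisms isotopic to the identity and does not change the Legendrian isotopy class. I expect the delicate point to be verifying carefully that this base-change step preserves the Legendrian isotopy class.
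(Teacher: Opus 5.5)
Your proposal follows the paper's route: existence is quoted from \cite[Theorem~2.7]{MR2557137}, and uniqueness comes from the part~(b) reduction to Moser isotopies via the contact isotopy $F_t=\widetilde{\phi}_t\circ\phi_t'^{-1}$, followed by the Moser-uniqueness argument of \cite{MR2557137}. Your extra base-change step, which prepends the segment $\beta_s$ from $\alpha$ to $\alpha'$, is sound and makes explicit a point the paper's ``simple adaptation'' leaves implicit: since $\alpha'=f\alpha$ with $f>0$, each $\beta_s$ is a positive multiple of $\alpha$ whose differential restricts on pages to a convex combination of positive area forms, so $\beta_s$ is compatible with $(B,\pi)$ and its Gray flow is the identity.
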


We need this stronger version (in fact, Theorem~\ref{thm:uniqueness_ribbon}) of the result in \cite{MR2557137} for later applications to contact Kirby calculus. Indeed, one may encounter two Legendrian knots that lie in admissible isotopies of the same ribbon surface without literally arising as Legendrian realizations of a fixed curve on the ribbon via a Moser isotopy. In that situation, Theorem~\ref{thm:uniqueness_ribbon} still implies that the knots are Legendrian isotopic, whereas the result of \cite{MR2557137} alone would not suffice.

\section{An algorithm to Legendrian realize a curve on a ribbon surface}

\subsection{Statement of the result}\label{section:Stat_results}

Before stating the main theorem, we formalize the notion of an ``abstract ribbon'', which allows us to separate the combinatorics on a surface from its concrete embedding in $(\R^3,\xist)$.

Let $\Sigma$ be a ribbon surface of a Legendrian graph $\G$. We write $[\Sigma]$ for the isotopy class of~$\Sigma$ through embeddings that fix $\G$ pointwise. It is straightforward to check that if $\Sigma$ and $\Sigma'$ are ribbon surfaces of the same Legendrian graph $\G$, then $[\Sigma]=[\Sigma']$. This leads to:

\begin{definition}\label{def:abstract_ribbon}
    The \emph{abstract ribbon} $\A$ of a Legendrian graph $\G$ is the class $[\Sigma]$, where $\Sigma$ is any ribbon surface of~$\G$.
\end{definition}

The abstract ribbon inherits a well-defined handle decomposition from any representative $\Sigma$, since the handle decomposition only depends on~$\G$.

We can now state the main result.

\begin{theorem}\label{thm:algorithm}
    There exists an algorithm with the following input and output:

    \medskip\noindent
    \textbf{Input:}
    \begin{itemize}
        \item a generic Legendrian graph $\G\subset(\R^3,\xist)$, given by its front projection;
        \item a homologically nontrivial simple closed curve $\aa$ on the abstract ribbon $\A$ of~$\G$.
    \end{itemize}

    \noindent
    \textbf{Output:} the front projection of a generic Legendrian knot $\L$ which is a Legendrian realization of~$\aa$ in the following sense. There exists a generic ribbon surface $\Sigma$ of~$\G$ and an isotopy $\psi_t\colon\Sigma\to\R^3$, $t\in[0,1]$, such that:
    \begin{enumerate}
        \item $\psi_0$ is the inclusion $\Sigma\subset\R^3$, and $\psi_t$ is fixed on a neighborhood of~$\partial\Sigma$;
        \item each surface $\psi_t(\Sigma)$ is transverse to~$\partial_z$ and hence convex;
        \item for every $\varepsilon>0$, the algorithm can be carried out so that
        \[
        \mathrm{Im}\psi_t\subset\Sigma\times[-\varepsilon,\varepsilon]\qquad\text{for all }t\in[0,1],
        \] where the second factor is parametrized by the $z$-coordinate.
        \item $\L$ is a Legendrian knot properly embedded in $\psi_1(\Sigma)$;
        \item the curve $\psi_1(\aa)$ is smoothly isotopic to $\L$ in the surface $\psi_1(\Sigma)$.
    \end{enumerate}
\end{theorem}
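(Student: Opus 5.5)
We construct the algorithm explicitly, following the sketch in the introduction, and then verify properties (1)--(5). The first step is to build the ribbon. From the front projection of $\G$ we draw a generic ribbon surface $\Sigma$ using the local models of Figures~\ref{fig:ribbon_local} and~\ref{fig:ribbon_local2}. Each $1$--handle is then parametrized as a thin strip $[0,1]\times[-\delta,\delta]$ that follows its Legendrian edge $e$. This strip is transverse to $\partial_z$. Its Lagrangian projection is an immersed band around $e_L$, and its front is a thin band around $e_F$ whose vertical thickness is controlled by $\delta$.

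The second step is to put the curve in standard position. Using the handle decomposition, we isotope $\aa$ within $\A$ so that it meets every $0$--handle in disjoint arcs, chosen as radial-type segments through the elliptic point's neighbourhood. It meets each $1$--handle in parallel arcs running along the core, each at a fixed transverse height $c\in(-\delta,\delta)$. By Lemma~\ref{lem:odd_number}, some $1$--handle has cocore meeting $\aa$ an odd number of times; we record one such handle $h_0$.

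The third step realizes the individual segments. Along a $1$--handle, a curve at constant height $c$ is not Legendrian. Instead, we take the Legendrian curve inside $\psi_1(\Sigma)$ to be the graph over the strip whose front has the slope of $e_F$, shifted by a small vertical offset. Equivalently, its Lagrangian projection is $e_L$ translated by $c$ in the $x$--direction. By the formula $r_z(s)=r_z(s_0)-\int x\,dy$, traversing the handle changes the $z$--value by $c\,\Delta y(e)$ relative to the core. This defect is the \emph{relative gain} of the segment; it depends only on the transverse position $c$ and on the signed $y$--extent of the edge. Inside a $0$--handle, any prescribed incoming and outgoing $z$--values can be connected by a Legendrian arc, for instance a zig-zag in the front, which stays within a vertically small neighbourhood of the disk. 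The real constraint is therefore only that the total gain around $\aa$ vanish, so that the pieces close up.

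The fourth step, balancing the gains, is the main obstacle. The sum of relative gains must be made zero, and this must be done while keeping the fronts generic and embedded. This is possible exactly because $\aa$ is homologically nontrivial. In $h_0$ the strands cross the cocore an odd number of times, so their algebraic crossing count is nonzero. Sliding the transverse positions $c$ of these strands, or adding a finger move of $\aa$ across the handle (an isotopy in $\A$), changes the total gain by a nonzero amount linear in the shift. We can therefore solve for total gain zero. Doing this while preserving the order of parallel strands, so that no new intersections appear, requires a careful combinatorial bookkeeping; this is where I expect most of the work.

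Once the gains balance, the Legendrian segments glue into a closed Legendrian knot $\L$. This gives the front output. We then define $\psi_t$ as a vertical graph isotopy of $\Sigma$ that pushes the surface to contain $\L$ and is the identity near $\partial\Sigma$. Such an isotopy exists because $\L$ is a section over $\aa$ in the $z$--direction. Transversality to $\partial_z$ is preserved, so convexity follows from Lemma~\ref{lem:ribbon_convex}. Finally, scaling the ribbon width $\delta$ and all offsets by a common factor scales the gains linearly, which achieves $\mathrm{Im}\,\psi_t\subset\Sigma\times[-\varepsilon,\varepsilon]$.
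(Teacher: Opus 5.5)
Your overall skeleton matches the paper's. You lift each segment keeping its Lagrangian projection, note that the pieces close up iff $\oint x\,dy=0$, cancel the total using the strands in a handle whose cocore meets $\aa$ an odd number of times ($k_+\neq k_-$), and then push $\Sigma$ vertically onto the lift. Two steps, however, are wrong as written.

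\textbf{The $0$--handle step.} You claim that arbitrary incoming and outgoing $z$--values can be joined inside a $0$--handle, e.g.\ by a zig-zag. This is false for arcs that must lie on $\psi_1(\Sigma)$. For such an arc, the $z$--change is $-\int x\,dy$ along an embedded arc in the (embedded) Lagrangian projection of the small $0$--handle. For fixed endpoints it can therefore vary only by at most the $d\alpha$--area of the handle. A zig-zag puts a kink into the Lagrangian projection, and it is a stabilization: it lowers $tb$ by one. But a Legendrian curve on $\psi_1(\Sigma)$ that misses the dividing set $\partial\Sigma$ has contact framing equal to the surface framing, so the output would violate (4)--(5). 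If $0$--handles could absorb any mismatch, homologically trivial curves would be realizable, contradicting Lemma~\ref{lem:inverse_Leg_real}; your balancing step would also be vacuous. What you actually need is control of the $0$--handle contributions. Either include them in the quantity you balance (they are small for arcs close to $\G$, since $\G$ is Legendrian), or make them exactly zero as in Lemma~\ref{lem:0handles}: run the arcs along radial leaves and correct the residual mismatch with a small bump near the vertex.

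\textbf{The $1$--handle gain.} Your formula $c\,\Delta y(e)$ is incorrect, and the justification of your balancing step rests on it. At a cusp, $e$ is tangent to $\partial_x$ and the ribbon is thickened in the $\partial_y-x\partial_z$ direction. So $e_L+(c,0)$ crosses $e_L$ and leaves the ribbon. Your two descriptions also disagree: that curve has front slope $-(x_e+c)$, not the slope of $e_F$. The correct statement is that a strand's gain relative to the core is $\pm$ the $dx\wedge dy$--area of the band between them, up to small end terms. This is strictly monotone in the offset because $d\alpha=dx\wedge dy$ is a positive area form on any surface transverse to $\partial_z$. For an edge through $h_0$ containing cusps with $\Delta y(e)=0$, your formula would predict that sliding strands changes nothing.

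With the area description, the fix is direct. Move the strands of $h_0$ so that each sweeps the same $d\alpha$--area $A$. This is automatically order-preserving; it is exactly the push in Figure~\ref{fig:segments_H_1}, so no delicate bookkeeping is needed. The total gain then changes by $\pm(k_+-k_-)A$. The genuinely missing quantitative point is that the required $A$ must fit inside $h_0$. You get this by first pushing all other strands very close to the skeleton, so the imbalance is tiny; the paper does this via gains $\varepsilon_1\Delta$ with combinatorial $\Delta$ and small $\varepsilon_1$.

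Finally, the theorem asks for a front diagram. The paper concentrates each strand's jump at the single hyperbolic point of its handle (Lemmas~\ref{lem:choosing_ribbon} and~\ref{lem:1handles_displacement}). This makes the front combinatorial: copies of $\boldsymbol{e}^\pm$ stacked by prominence and joined by a braid. Your distributed drift yields only an implicit lift, whose crossing pattern you would still need to describe.
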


We first describe the algorithm step by step, and then justify that it produces the Legendrian realization claimed in Theorem~\ref{thm:algorithm}.

\begin{remark}
    By Theorem \ref{thm:uniqueness_ribbon}, $L$ is the unique, up to Legendrian isotopy, Legendrian realization of the isotopy class of $\aa$.
\end{remark}

\subsection{Description of the algorithm}

We first fix notation. The data of the algorithm consists of:
\begin{itemize}
    \item $\G$, a generic Legendrian graph in $(\R^3,\xist)$, described by its front projection;
    \item $\A$, the abstract ribbon of~$\G$;
    \item $\aa$, a homologically nontrivial simple closed curve on~$\A$.
\end{itemize}

As recalled above, the ribbon surfaces of~$\G$ (and hence the abstract ribbon~$\A$) come with a natural handle decomposition. We will denote a handle, either $0$-- or $1$--handle, by~$H$.

If $H$ is a $1$--handle, we call $\G\cap H$ the \emph{Legendrian core} of~$H$ and denote it by~$\boldsymbol{e}$.\footnote{Here the letter $\boldsymbol{e}$ stands for ``edge'', even though in general $\boldsymbol{e}$ is only a subinterval of an edge of~$\G$.} If $H$ is a $0$--handle, we call $\G\cap H$ the \emph{Legendrian skeleton} of~$H$, consisting of several Legendrian segments meeting at the unique vertex~$v$ contained in~$H$; we usually denote these segments by $\boldsymbol{e}_1,\dots,\boldsymbol{e}_n$. See Figure~\ref{fig:notation_algorithm}. Note that in this example, $\boldsymbol{e_1} \cup \boldsymbol{e} \cup \boldsymbol{e_3}$ forms an edge of $\G$.

\begin{figure}[htbp]
    \centering
    \begin{overpic}[scale=1]{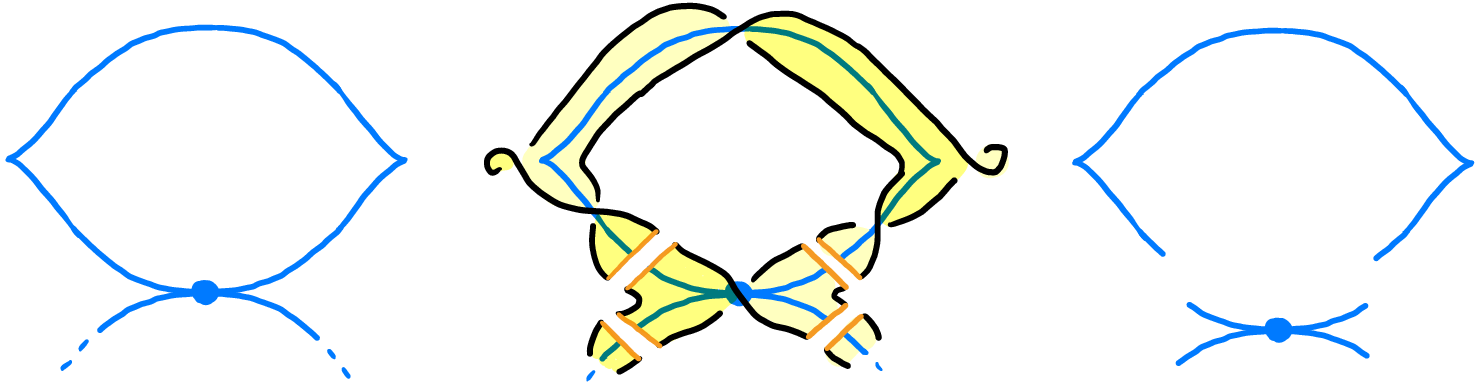}
        \put(5,11){$\G$}
        \put(85.8,5.5){$v$}
        \put(80,20){$\boldsymbol{e}$}
        \put(77.5,5){$\boldsymbol{e_1}$}
        \put(76.5,1){$\boldsymbol{e_2}$}
        \put(94,5){$\boldsymbol{e_3}$}
        \put(94,1){$\boldsymbol{e_4}$}
    \end{overpic}
    \caption{Left: a portion of a Legendrian graph $\G$ containing a vertex $v$ and an edge. Middle: a portion of a ribbon surface subdivided into handles. Right: the Legendrian core $\boldsymbol{e}$ of the $1$--handle and the Legendrian skeleton of the $0$--handle.}\label{fig:notation_algorithm}
\end{figure}

We now make two preliminary choices:
\begin{itemize}
    \item Choose a $1$--handle of $\A$ whose cocore intersects $\aa$ an odd number of times (this exists by Lemma~\ref{lem:odd_number}). Denote this distinguished $1$--handle by~$H_\star$.
    \item Choose an orientation for the core $\boldsymbol{e}$ of each $1$--handle~$H$.
\end{itemize}

Given such a choice, each $1$--handle $H$ can be identified with the abstract handle shown in Figure~\ref{fig:identification_handle}, with the blackboard orientation agreeing with the orientation of~$H$ and the core $\boldsymbol{e}$ oriented from left to right.

\begin{figure}[htbp]
    \centering
    \begin{overpic}[scale=1]{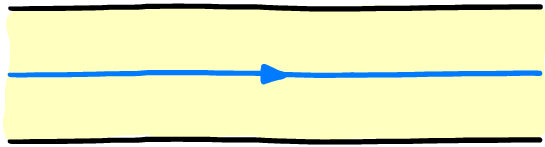}
        \put(102,11){$\boldsymbol{e}$}
    \end{overpic}
    \caption{An abstract $1$--handle, with oriented core~$\boldsymbol{e}$.}\label{fig:identification_handle}
\end{figure}

In all subsequent pictures of abstract $1$--handles we adopt this convention. For example, if a $1$--handle $H$ looks as on the left of Figure~\ref{fig:identification_handle_ex}, then the green and orange segments correspond to the green and orange segments on the right, after identifying $H$ with the abstract handle.

\begin{figure}[htbp]
    \centering
    \begin{overpic}[scale=1]{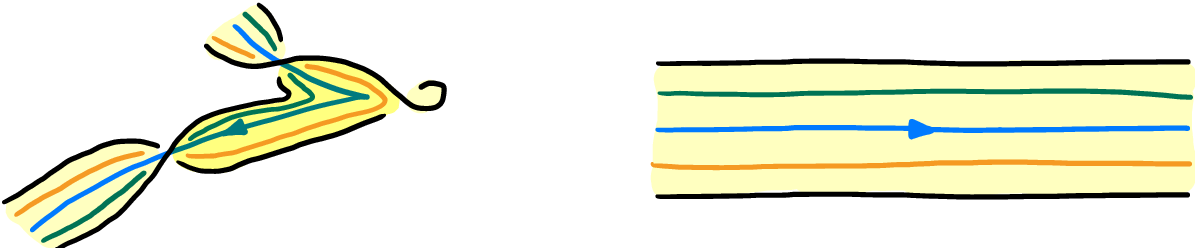}
    \end{overpic}
    \caption{Left: a $1$--handle $H$ with a green and an orange segment and Legendrian core~$\boldsymbol{e}$. Right: the corresponding segments on the abstract handle of Figure~\ref{fig:identification_handle}.}\label{fig:identification_handle_ex}
\end{figure}

\subsubsection*{\textbf{STEP 1}: subdividing the curve}

Isotope the curve $\aa$ (on the abstract ribbon~$\A$) to minimize its intersections with the attaching regions of the $1$-handles. After this isotopy, $\aa$ decomposes into a concatenation of segments, each contained in a single handle and meeting its boundary only in the attaching regions. These segments alternate between $0$-- and $1$--handles, see Figure~\ref{fig:minimize_intersections}.

\begin{figure}[htbp]
    \centering
    \begin{overpic}[scale=1]{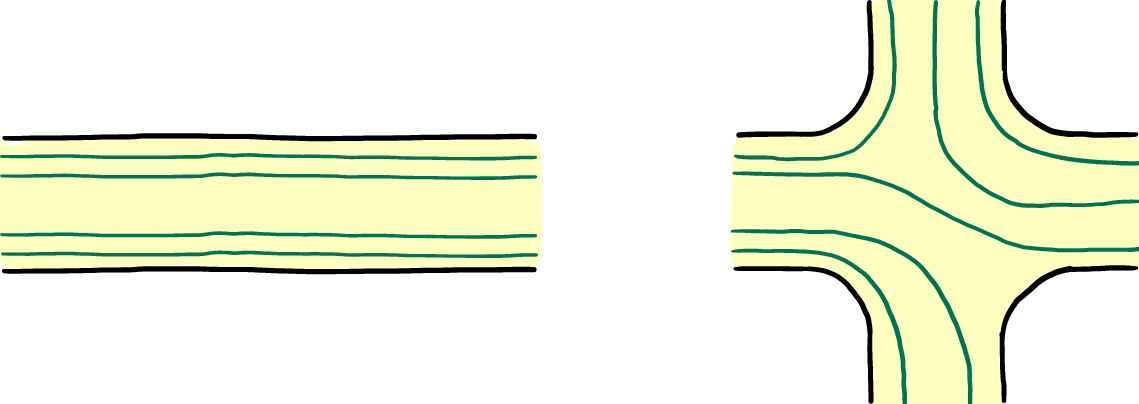}
        \put(24,15){$\boldsymbol\cdot$}
        \put(24,16.5){$\boldsymbol\cdot$}
        \put(24,18){$\boldsymbol\cdot$}
    \end{overpic}
    \caption{Left: a $1$--handle of $\A$ and the segment of $\aa$ it contains after Step~1. Right: an example of how $\aa$ may look on a $0$--handle after Step~1.}\label{fig:minimize_intersections}
\end{figure}

We now:

\begin{itemize}
    \item choose an orientation of~$\aa$;
    \item choose one segment of $\aa$ lying in a $1$--handle and call it~$\ll_1$.
\end{itemize}

Let $\ll_2$ be the next segment encountered when traveling along $\aa$ in the direction of its orientation, then $\ll_3$, and so on, until we return to~$\ll_1$. In total we obtain segments $\ll_1,\dots,\ll_n$ with $n$ even. The odd-indexed segments lie in $1$--handles, and the even-indexed segments lie in $0$--handles. Each segment inherits an orientation from~$\aa$.

\subsubsection*{\textbf{STEP 2}: relative gain (displacement index) in a 1--handle}

Let $H$ be a $1$--handle, and let $\ll_{j_1},\dots,\ll_{j_k}$ be the segments of $\aa$ contained in~$H$. 
To each segment $\ll_{j_i}$ we assign a rational number $\Delta(\ll_{j_i})$, called the 
\emph{relative gain} (or \emph{displacement index}) of~$\ll_{j_i}$.

We first define an intermediate quantity $\widetilde{\Delta}(\ll_{j_i})$. 
After identifying $H$ with the abstract handle of Figure~\ref{fig:identification_handle}, 
we may speak of the lowest segment, the second lowest, and so on.  
If $\ll_{j_i}$ is the $h$-th lowest segment, set
\[
    a(\ll_{j_i}) := h - \frac{k+1}{2}.
\]
Then define
\[
    \widetilde{\Delta}(\ll_{j_i}) :=
    \begin{cases}
        \lfloor a(\ll_{j_i}) \rfloor, & a(\ll_{j_i}) < 0,\\[0.3em]
        0,                            & a(\ll_{j_i}) = 0,\\[0.3em]
        \lceil a(\ll_{j_i}) \rceil,   & a(\ll_{j_i}) > 0.
    \end{cases}
\]

When the number of segments is odd, the lowest segment has value $-d$, the middle segment has value $0$, 
and the highest segment has value~$d$, with all other values determined by the formula above. 
If there is an even number of segments, then no segment has $\widetilde{\Delta}=0$. 
See Figure~\ref{fig:displacement_index_def1}.

\begin{figure}[htbp]
    \centering
    \begin{overpic}[scale=1]{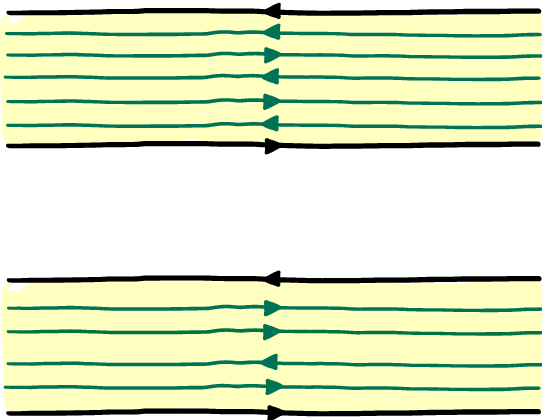}
        \put(100,69){$\leftarrow \widetilde{\Delta}=2$}
        \put(100,61){$\leftarrow \widetilde{\Delta}=0$}
        \put(100,52){$\leftarrow \widetilde{\Delta}=-2$}
        \put(-33,65){$\widetilde{\Delta}=1\rightarrow$}
        \put(-39.5,56.5){$\widetilde{\Delta}=-1\rightarrow$}
    
        \put(100,18){$\leftarrow \widetilde{\Delta}=2$}
        \put(100,8){$\leftarrow \widetilde{\Delta}=-1$}
        \put(-33,14){$\widetilde{\Delta}=1\rightarrow$}
        \put(-39.5,4){$\widetilde{\Delta}=-2\rightarrow$}
    \end{overpic}
    \caption{Top: a $1$--handle with an odd number of segments of $\aa$. Bottom: a $1$--handle with an even number of segments. The values $\widetilde{\Delta}$ are indicated.}\label{fig:displacement_index_def1}
\end{figure}

We then define the relative gain $\Delta(\ll_{j_i})$ by
\[
\Delta(\ll_{j_i}) := \delta_{j_i}\,\widetilde{\Delta}(\ll_{j_i}),
\]
where
\[
\delta_{j_i} :=
\begin{cases}
    1, & \text{if the orientation of $\ll_{j_i}$ agrees with the orientation of the core $\boldsymbol{e}$ of $H$,}\\
   -1, & \text{otherwise.}
\end{cases}
\]
For the segments in Figure~\ref{fig:displacement_index_def1}, the corresponding values of $\Delta$ are shown in Figure~\ref{fig:displacement_index_def2} (recall that the core $\boldsymbol{e}$ is oriented from left to right).

\begin{figure}[htbp]
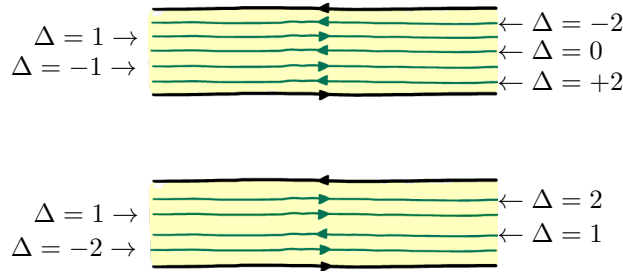

    \centering
    \begin{overpic}[scale=1]{displacement_index_def}
        \put(100,69){$\leftarrow \Delta=-2$}
        \put(100,61){$\leftarrow \Delta=0$}
        \put(100,52){$\leftarrow \Delta=+2$}
        \put(-33,65){$\Delta=1\rightarrow$}
        \put(-39.5,56.5){$\Delta=-1\rightarrow$}
        \put(100,18){$\leftarrow \Delta=2$}
        \put(100,8){$\leftarrow \Delta=1$}
        \put(-33,14){$\Delta=1\rightarrow$}
        \put(-39.5,4){$\Delta=-2\rightarrow$}
    \end{overpic}
    \caption{The relative gains (displacement indices) of the segments from Figure~\ref{fig:displacement_index_def1}.}\label{fig:displacement_index_def2}
\end{figure}

\begin{remark}
    Intuitively, $\Delta(\ll)$ measures how much the Legendrian realization of a segment $\ll$ will ``gain'' or ``lose'' height in the front projection relative to the core~$\boldsymbol{e}$. Segments further away from the center of the handle have larger $|\Delta|$, and reversing the orientation of the segment flips the sign. The precise geometric meaning of $\Delta$ will appear in the justification of the algorithm, where it will be related to the integral $\int x\,dy$ along the segment.

    The following discussion provides an informal explanation. Consider a $1$-handle $H$ of a ribbon surface $\Sigma$ of a Legendrian graph $\G$ (see, for example, Figure~\ref{fig:heuristic_delta}).

    \begin{figure}[htbp]
        \centering
        \begin{overpic}[scale=0.8]{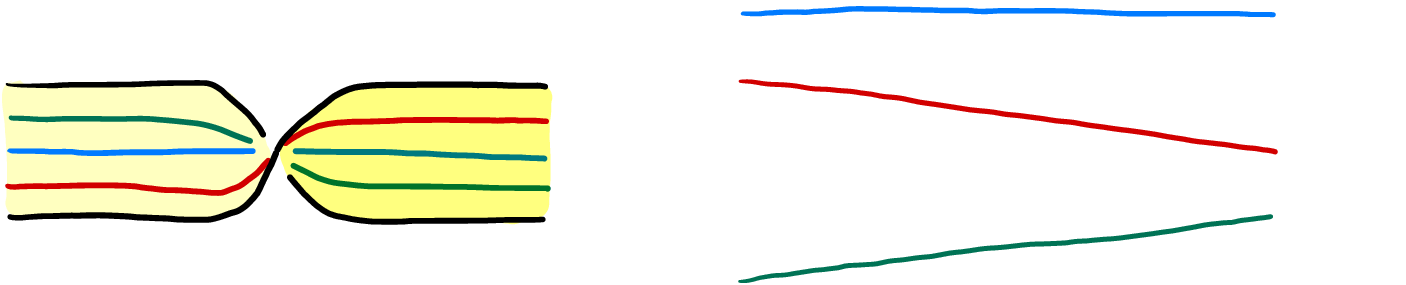}    
        \end{overpic}    
        \caption{Left: The front projection of the handle $H$ and three segments on it (one being the Legendrian core of the handle). Right: The Legendrian realization of these three segments.}\label{fig:heuristic_delta}
    \end{figure}

    Denote by $\boldsymbol{e}$ the Legendrian core of the handle $H$ (the central blue segment in Figure~\ref{fig:heuristic_delta}). This is one of the inputs to our algorithm, as we know the front projection of $\G$. Assume there are three segments of $\aa$ in this handle $H$. In Step~1, we first straighten these segments to look like the three segments (green, blue, and red) in Figure~\ref{fig:heuristic_delta} (left). The central blue segment is the Legendrian core, the red segment is ``a bit closer to us'' (in the sense that it has a higher $x$-coordinate), and the green segment is ``a bit further away from us.'' 

    The Legendrian realization of the blue segment is a copy of itself, possibly translated in the $\partial_z$ direction. Intuitively, the Legendrian realizations of the green and red segments will look similar to the blue segment but slightly rotated due to the contact condition (the $(y,z)$-slope of the contact planes decreases as $x$ increases), and possibly translated in the $\partial_z$ direction (see Figure~\ref{fig:heuristic_delta}, right). The relative gain $\Delta$ measures how much these Legendrian realizations lose or gain height relative to the blue segment when traversed in their oriented direction. For example, if all the segments are oriented from left to right, the red segment loses some height, while the green segment gains some height.

    The amount by which we need to translate these Legendrian realizations in the $\partial_z$ direction depends on the entire curve $\aa$ and will be encoded by the quantity $P$ defined later in Step~4.
\end{remark}

\subsubsection*{\textbf{STEP 3}: balancing the total gain}

Define
\[
\theta := \sum_{i=1}^n \Delta(\ll_i),
\]
where by convention $\Delta(\ll_i):=0$ if $\ll_i$ lies in a $0$--handle. Ultimately, $\theta$ will correspond to the integral $\oint_\aa x\,dy$, so we need to modify the $\Delta$’s so that $\theta$ becomes zero.

We do this by adjusting the relative gains of the segments in the distinguished handle $H_\star$. Let $\ll_{1_1},\dots,\ll_{1_k}$ be the segments of $\aa$ that lie in $H_\star$, with $k$ odd. Let $k_+$ be the number of those segments whose orientation agrees with the orientation of the core $\boldsymbol{e}$ of $H_\star$, and $k_-$ the number oriented in the opposite direction. Since $k$ is odd, $k_+\neq k_-$.

We redefine the relative gain of the segments in $H_\star$ by
\begin{equation}\label{eq:redefine_displ}
    \begin{cases}
      \Delta_{\text{old}}(\ll_{1_*}) := \Delta(\ll_{1_*}), \\[0.4em]
      \Delta(\ll_{1_*}) = \Delta_{\text{old}}(\ll_{1_*}) - \dfrac{\theta}{k_+ - k_-}, & \text{if $\ll_{1_*}$ is oriented as $\boldsymbol{e}$},\\[0.6em]
      \Delta(\ll_{1_*}) = \Delta_{\text{old}}(\ll_{1_*}) + \dfrac{\theta}{k_+ - k_-}, & \text{otherwise.}
    \end{cases}
\end{equation}

\begin{remark}\label{rk:displ_sum_zero}
Let $\{H_j\}_j$ be the collection of $1$--handles, and assume $H_1=H_\star$. For each $j$ let $k_j$ be the number of segments of $\aa$ in $H_j$, and denote these segments by $\ll_{j_1},\dots,\ll_{j_{k_j}}$. Then, after the redefinition~\eqref{eq:redefine_displ}, we have
\begin{equation}\label{eq:displ_sum_zero}
\begin{split}
\sum_{i=1}^n \Delta(\ll_i)
&= \sum_{i=0}^{\frac{n}{2}-1} \Delta(\ll_{1+2i})
 = \sum_j \sum_{i=1}^{k_j} \Delta(\ll_{j_i}) \\
&= \sum_{i=1}^{k_1}\Delta(\ll_{1_i}) + \sum_{j\neq 1}\sum_{i=1}^{k_j}\Delta(\ll_{j_i}) \\
&= \Big(-k_+\frac{\theta}{k_+-k_-} + k_-\frac{\theta}{k_+-k_-}\Big) + \sum_{i=1}^{k_1}\Delta_{\text{old}}(\ll_{1_i}) + \sum_{j\neq 1}\sum_{i=1}^{k_j}\Delta(\ll_{j_i}) \\
&= -\theta + \sum_{i=1}^{k_1}\Delta_{\text{old}}(\ll_{1_i}) + \sum_{j\neq 1}\sum_{i=1}^{k_j}\Delta(\ll_{j_i}) \\
&= -\theta + \theta = 0,
\end{split}
\end{equation}
where in the last step we used the original definition of~$\theta$. Thus after this adjustment we have forced the total sum of the relative gains to vanish.
\end{remark}

\subsubsection*{\textbf{STEP 4}: prominence}

We now introduce a discrete ``height bookkeeping'' function, the \emph{prominence}, defined at the endpoints of the segments. Informally, $P$ measures the relative height of the Legendrian realization of $\aa$ compared to the Legendrian graph~$\G$.

If $\ll_i$ is a segment, we denote its starting point by $\partial^{-}\ll_i$ 
and its endpoint by $\partial^{+}\ll_i$. Define $P$ inductively by
\[
\begin{cases}
 P(\partial^-\ll_1)=0,\\[0.3em]
 P(\partial^+\ll_i) = P(\partial^-\ll_i) + \Delta(\ll_i), & \text{if $i$ is odd (segment in a $1$--handle)},\\[0.3em]
 P(\partial^+\ll_i) = P(\partial^-\ll_i), & \text{if $i$ is even (segment in a $0$--handle)}.
\end{cases}
\]

Since $\partial^+\ll_n = \partial^-\ll_1$, the prominence is well-defined precisely because $\sum_i\Delta(\ll_i)=0$ by~\eqref{eq:displ_sum_zero}.

For segments in $0$--handles we will simply speak of ``the prominence of the segment'', since the two endpoints have the same value of~$P$.

\subsubsection*{\textbf{STEP 5}: Legendrian realization in a 1--handle}

We now explain how to Legendrian realize the segments of $\aa$ that lie in a fixed $1$--handle~$H$. We will use the front projection of the Legendrian core $\boldsymbol{e}$ as a template.

\begin{remark}
    In this step and the next, for readability we will not distinguish notationally between subsets of $\R^3$ and their front projections.
\end{remark}

Let $\ll_{j_1},\dots,\ll_{j_k}$ be the segments of $\aa$ in~$H$, and let $\boldsymbol{e}$ be the Legendrian core of~$H$ (see Figure~\ref{fig:example_1handles1}).

\begin{figure}[htbp]
    \centering
    \begin{overpic}[scale=1]{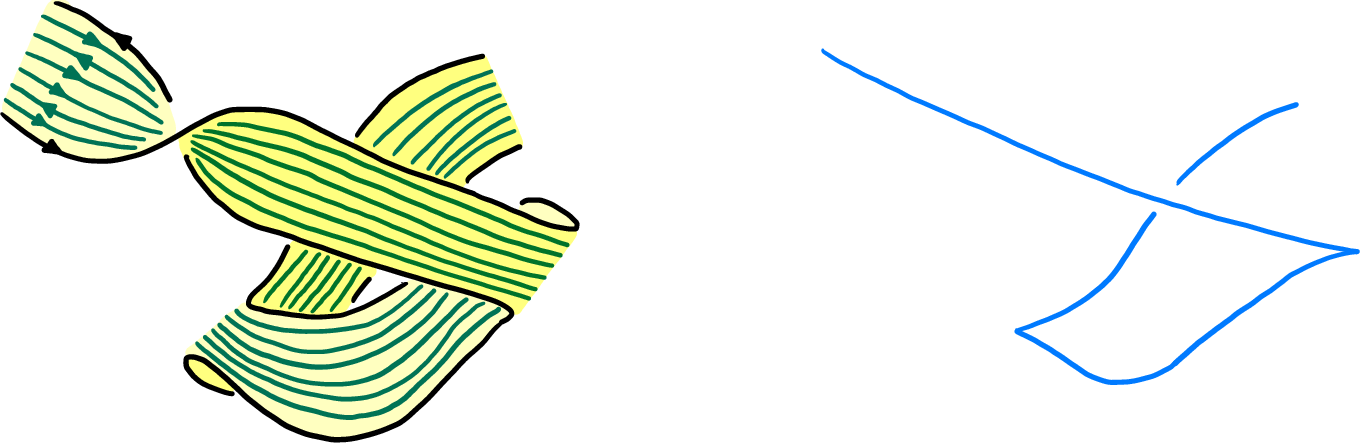}
        \put(19,27){$H$}
        \put(68,27){$\boldsymbol{e}$}
    \end{overpic}
    \caption{Left: a $1$--handle $H$ with six segments of $\aa$. Right: the Legendrian core $\boldsymbol{e}=\G\cap H$.}\label{fig:example_1handles1}
\end{figure}

Remove from $\boldsymbol{e}$ a small open subsegment disjoint from cusps and crossings. Denote the two resulting components by $\boldsymbol{e}^+$ and $\boldsymbol{e}^-$, see Figure~\ref{fig:example_1handles2} (left).\footnote{Choosing which component is called $\boldsymbol{e}^+$ and which $\boldsymbol{e}^-$ is arbitrary; the final result does not depend on this choice.}

\begin{figure}[htbp]
    \centering
    \begin{overpic}[scale=1]{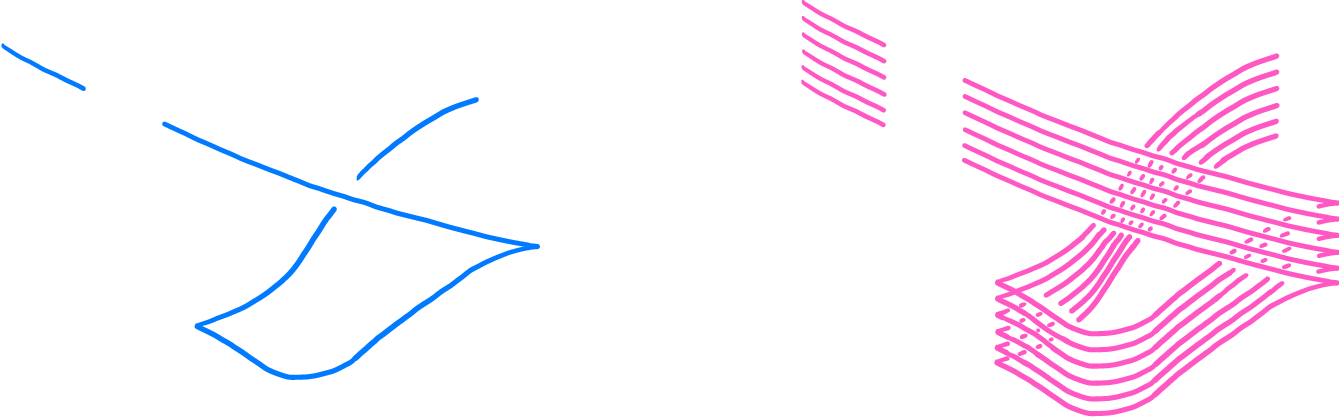}
        \put(4,27){$\boldsymbol{e}^+$}
        \put(15,22){$\boldsymbol{e}^-$}
    \end{overpic}
    \caption{Left: the two components $\boldsymbol{e}^+$ and $\boldsymbol{e}^-$ of $\boldsymbol{e}$ after removing a small open interval. Right: $k$ vertical translates of $\boldsymbol{e}^+\sqcup\boldsymbol{e}^-$.}\label{fig:example_1handles2}
\end{figure}

Create $k$ vertical translates of $\boldsymbol{e}^+\sqcup\boldsymbol{e}^-$ in the front projection (Figure~\ref{fig:example_1handles2}, right). These copies of $\boldsymbol{e}^+ \sqcup \boldsymbol{e}^-$ will form part of the Legendrian realization of the segments $\ll_{j_1}, \dots, \ll_{j_k}$. What remains is to connect the copies of $\boldsymbol{e}^+$ to the copies of $\boldsymbol{e}^-$ with a Legendrian braid.

For each segment $\ll_{j_*}$, denote by $p^\pm_*$ the endpoint of $\ll_{j_*}$ lying on the same side as $\boldsymbol{e}^\pm$, i.e.\ the endpoint in the component of the attaching region containing an endpoint of $\boldsymbol{e}^\pm$ (see Figure~\ref{fig:endpoints_ex}).

\begin{figure}[htbp]
    \centering
    \begin{overpic}[scale=1]{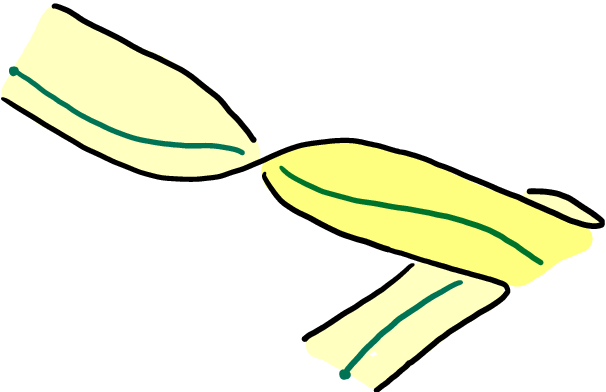}
        \put(-4,56){$p^+_*$}
        \put(15,46){$\ll_*$}
        \put(48,0){$p^-_*$}
    \end{overpic}
    \caption{A segment $\ll_*$ in $H$ with endpoints $p^\pm_*$ on the sides of $\boldsymbol{e}^\pm$.}\label{fig:endpoints_ex}
\end{figure}

Label the translates of $\boldsymbol{e}^\pm$ by $\boldsymbol{e}^\pm(1),\dots,\boldsymbol{e}^\pm(k)$ as follows: the copy labeled $\boldsymbol{e}^\pm(l)$ lies higher in $z$ than the copy $\boldsymbol{e}^\pm(m)$ if
\begin{itemize}
    \item $P(p^\pm_l) > P(p^\pm_m)$, or
    \item $P(p^\pm_l) = P(p^\pm_m)$ but $P(p^\mp_l) > P(p^\mp_m)$.
\end{itemize}

For each $i=1,\dots,k$, connect $\boldsymbol{e}^+(i)$ to $\boldsymbol{e}^-(i)$ by a straight segment in the front projection. The resulting Legendrian arc is declared to be the Legendrian realization of~$\ll_{j_i}$.

\begin{remark}
    Informally, the Legendrian realization of a segment $\ll_{j_*}$ looks like a copy of the core $\boldsymbol{e}$, starting at relative height $P(\partial^-\ll_{j_*})$ and ending at relative height $P(\partial^+\ll_{j_*})$, with a jump of size $\Delta(\ll_{j_*})$ along the way. The combinatorics of the braid record how these jumps for different segments interleave.
\end{remark}

Figure~\ref{fig:example_1handles3} shows a concrete choice of prominences for the segments in Figure~\ref{fig:example_1handles1}, and Figure~\ref{fig:example_1handles4} and~\ref{fig:example_1handles3} show the corresponding braid.

\begin{figure}[htbp]
    \centering
    \begin{overpic}[scale=1]{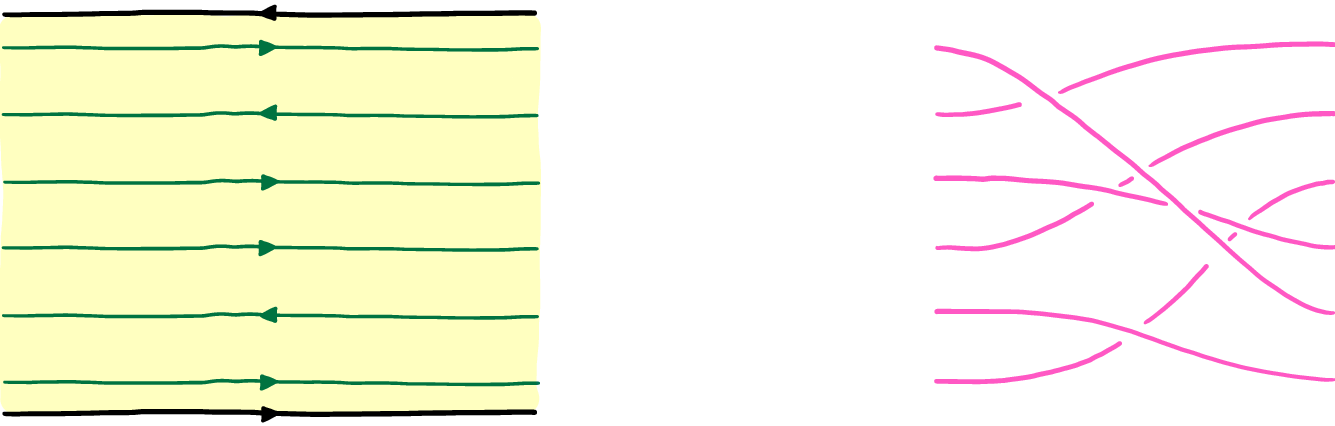}
        \put(-14,2.5){$P=4\rightarrow$}
        \put(-14,7.5){$P=2\rightarrow$}
        \put(-14,12.5){$P=3\rightarrow$}
        \put(-14,17.5){$P=3\rightarrow$}
        \put(-14,22.5){$P=2\rightarrow$}
        \put(-14,27.5){$P=0\rightarrow$}

        \put(41,2.5){$\leftarrow P=1$}
        \put(41,7.5){$\leftarrow P=0$}
        \put(41,12.5){$\leftarrow P=2$}
        \put(41,17.5){$\leftarrow P=4$}
        \put(41,22.5){$\leftarrow P=4$}
        \put(41,27.5){$\leftarrow P=3$}

        \put(67,2.5){$0$}
        \put(67,7.5){$2$}
        \put(67,12.5){$2$}
        \put(67,17.5){$3$}
        \put(67,22.5){$3$}
        \put(67,27.5){$4$}

        \put(101,2.5){$0$}
        \put(101,7.5){$1$}
        \put(101,12.5){$2$}
        \put(101,17.5){$3$}
        \put(101,22.5){$4$}
        \put(101,27.5){$4$}
    \end{overpic}
    \caption{Left: prominences at the endpoints of the segments in Figure~\ref{fig:example_1handles1}. Right: schematic picture of the braid, with strands labeled by the prominence on each side.}\label{fig:example_1handles3}
\end{figure}

\begin{figure}[htbp]
    \centering
    \begin{overpic}[scale=1]{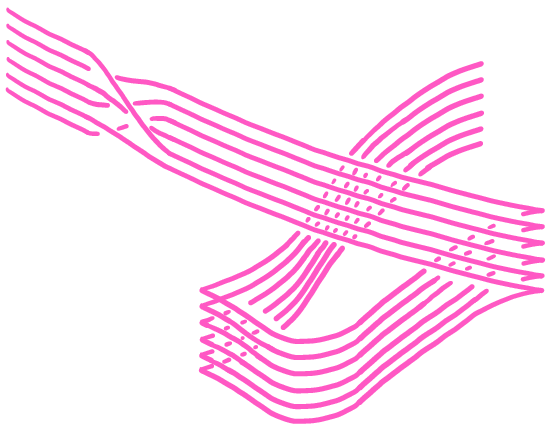}
    \end{overpic}
    \caption{Legendrian realization of the segments of Figure~\ref{fig:example_1handles1}, using the prominence data of Figure~\ref{fig:example_1handles3}.}\label{fig:example_1handles4}
\end{figure}

Note that the data in Figure~\ref{fig:example_1handles1} is not sufficient to determine the Legendrian braid we should place in Figure~\ref{fig:example_1handles2}, right. Indeed, only from Figure~\ref{fig:example_1handles1}, we can at most compute the displacement index $\Delta$ of all the segments. However, to determine the prominence, we need to know how the rest of the curve $\aa$ looks outside the $1$-handle we are considering. For this reason, we must specify the prominence of at least one endpoint of each segment.

\subsubsection*{\textbf{STEP 6}: Legendrian realization in a 0--handle}\label{step_6}

Let $H$ be a $0$--handle, and let $\ll_{j_1},\dots,\ll_{j_k}$ be the segments of $\aa$ contained in~$H$. Let $\boldsymbol{e}_1,\dots,\boldsymbol{e}_n$ be the Legendrian segments in the Legendrian skeleton of $H$, meeting at the vertex~$\boldsymbol{v}$ (Figure~\ref{fig:Leg_real_0handle0}).

\begin{figure}[htbp]
    \centering
    \begin{overpic}[scale=1]{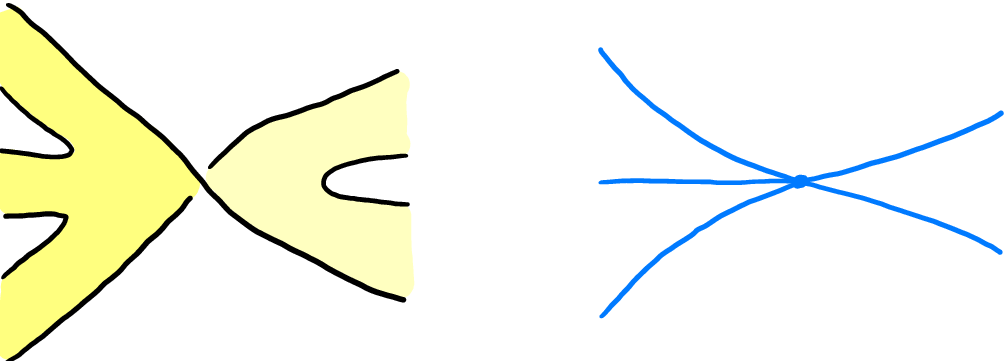}
        \put(78,20){$\boldsymbol{v}$}
        \put(54,5){$\boldsymbol{e_1}$}
        \put(54,17){$\boldsymbol{e_2}$}
        \put(54,30){$\boldsymbol{e_3}$}
        \put(100,10){$\boldsymbol{e_4}$}
        \put(100,25){$\boldsymbol{e_5}$}
        \put(18,25){$H$}
    \end{overpic}
    \caption{A $0$--handle $H$ and its Legendrian skeleton.}\label{fig:Leg_real_0handle0}
\end{figure}

The Legendrian realization of a segment $\ll_*$ in $H$ is obtained by concatenating two of the Legendrian segments $\boldsymbol{e}_1,\dots,\boldsymbol{e}_n$, possibly translated in the $\partial_z$ direction and slightly perturbed. More precisely, if $\ll_*$ is the segment of $\aa$ that joins a $1$--handle $H_1$ to a $1$--handle $H_2$, then its Legendrian realization joins the corresponding pieces of the edges of~$\G$ passing through $H_1$ and~$H_2$.

The relative position of the Legendrian realizations is determined by the prominence. Since both endpoints of a segment $\ll_*$ in a $0$--handle have the same prominence, we write simply $P(\ll_*l)$ for that common value.

We place a segment $\ll_{j_l}$ above a segment $\ll_{j_m}$ if $P(\ll_{j_l}) > P(\ll_{j_m})$. For example, if
\[
P(\ll_{j_4}) < P(\ll_{j_2}) < P(\ll_{j_3}) < P(\ll_{j_1}) < P(\ll_{j_5}),
\]
then the Legendrian realizations are stacked vertically as in Figure~\ref{fig:Leg_real_0handle1}.

\begin{figure}[htbp]
    \centering
    \begin{overpic}[scale=1]{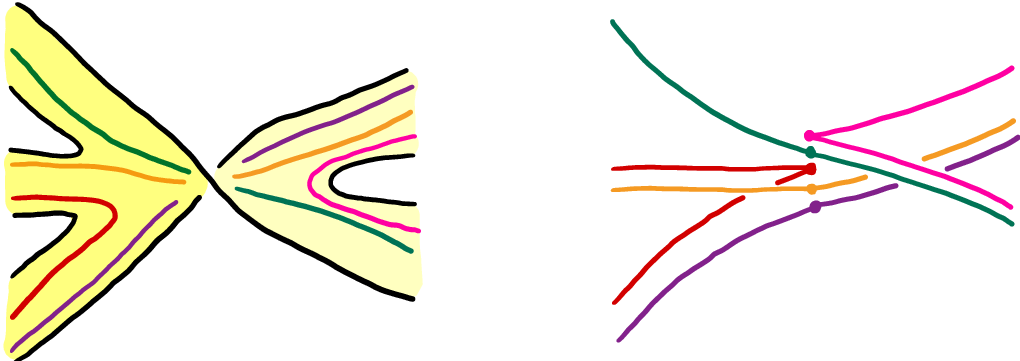}
        \put(-4,30){$\ll_{j_1}$}
        \put(-4,20){$\ll_{j_2}$}
        \put(-4,15){$\ll_{j_3}$}
        \put(-4,0){$\ll_{j_4}$}
        \put(41,20){$\ll_{j_5}$}
    \end{overpic}
    \caption{Left: a $0$--handle $H$ with several segments of $\aa$. Right: their Legendrian realizations when all prominences are distinct. The dots indicate the relative vertical position.}\label{fig:Leg_real_0handle1}
\end{figure}

In the case where $P(\ll_{j_l}) = P(\ll_{j_m})$, we again draw the Legendrian realization of these segments as the concatenation of two of the Legendrian segments of the Legendrian skeleton of $H$, but we perturb them slightly to avoid overlaps. The required perturbation visually matches the relative position of the segments $\ll_{j_l}$ and $\ll_{j_m}$ in $H$. See Figure~\ref{fig:Leg_real_0handle2} for an example.

\begin{figure}[htbp]
    \centering
    \begin{overpic}[scale=1]{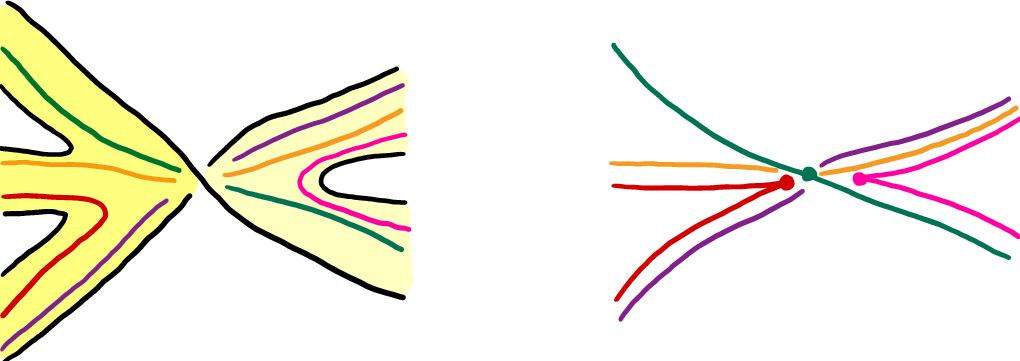}
    \end{overpic}
    \caption{Left: a $0$--handle with several segments of $\aa$ of equal prominence. Right: their Legendrian realizations after a small perturbation.}\label{fig:Leg_real_0handle2}
\end{figure}

\subsubsection*{\textbf{STEP 7}: connecting the pieces}

Finally, we connect the Legendrian realizations of consecutive segments. For each $i$, we join the endpoint of the Legendrian realization of $\ll_i$ to the starting point of the Legendrian realization of $\ll_{i+1}$ by a straight Legendrian segment.

\subsection{An example}\label{sec:example_algo}

In this section we illustrate how the algorithm is applied in practice. Consider the Legendrian graph $G$ in Figure~\ref{fig:example_algorithm1}.

\begin{figure}[htbp]
    \centering
    \begin{overpic}[scale=1]{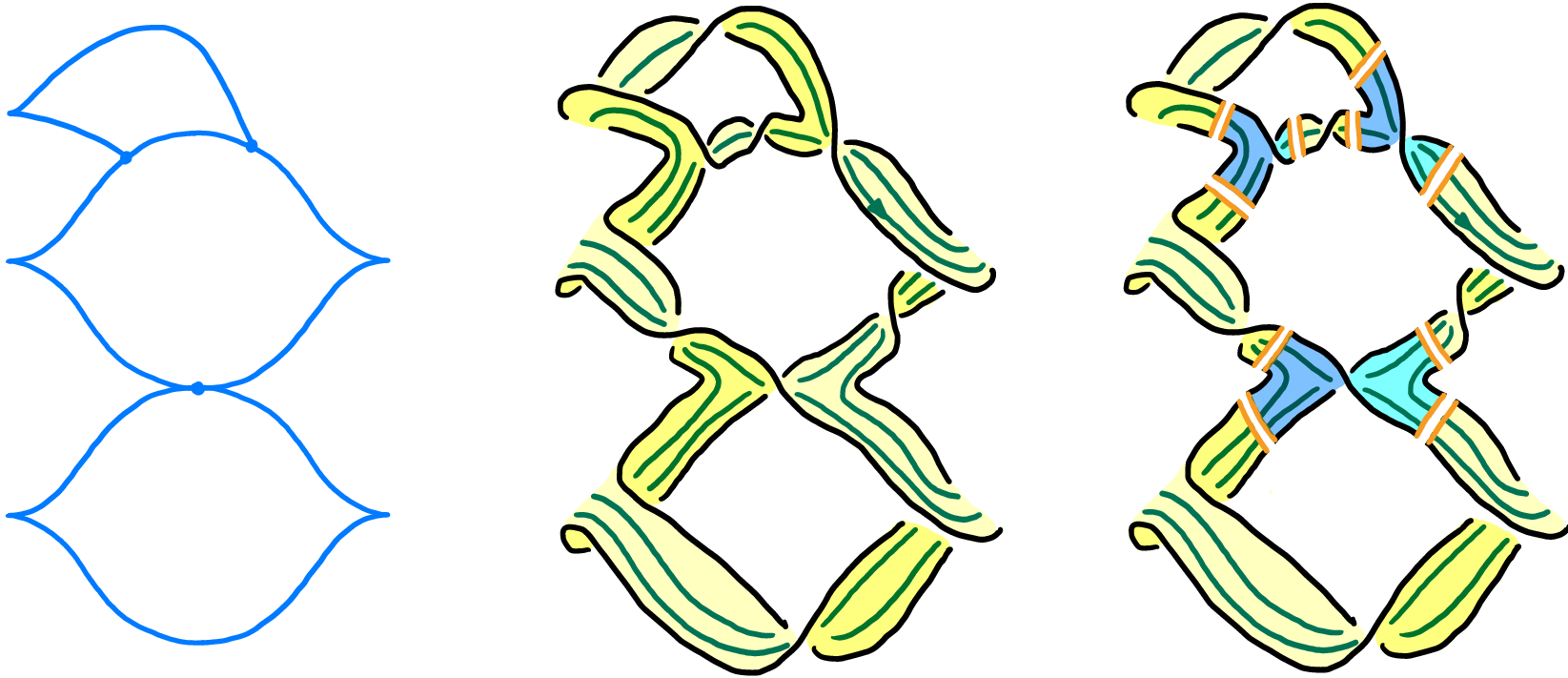}
      \put(20,32){$G$}
      \put(59,32){$R$}
      \put(71,40){$H_\star$}
      \put(86,28){$\ll_1\rightarrow$}
    \end{overpic}
    \caption{Left: the Legendrian graph $G$. Middle: a ribbon surface $R$ of $G$ and an oriented, homologically nontrivial simple closed curve $\aa$ on $R$ (in green). Right: the handle decomposition of $R$ and the curve $\aa$ subdivided into segments as in Step~1.}
    \label{fig:example_algorithm1}
\end{figure}

In the middle of Figure~\ref{fig:example_algorithm1} we see a ribbon surface $R$ of $G$ with an oriented, homologically nontrivial simple closed curve $\aa$ (drawn in green). On the right of Figure~\ref{fig:example_algorithm1} we see the handle decomposition of $R$ (three $0$--handles in blue, and six $1$--handles in yellow), together with the curve $\aa$ subdivided into segments as prescribed after Step~1. In particular, Figure~\ref{fig:example_algorithm1} also indicates which segment is $\ll_1$ and which $1$--handle is the distinguished handle $H_\star$.

If we compute the relative gain of the segments contained in the $1$--handles, we find that all such segments have the same strictly negative relative gain, except for:
\begin{itemize}
  \item the segment in $H_\star$, whose relative gain must later be adjusted; and
  \item the segment in the unique $1$--handle that contains only one segment and is not $H_\star$ (it is $\ll_7$ and its relative gain is $0$).
\end{itemize}

More concretely, set the prominence of $\partial^-\ll_1$ to be zero, that is $P(\partial^-\ll_1)=0$. Then
\[
  P(\partial^+\ll_1)=-\varepsilon,\qquad
  P(\ll_2)=-\varepsilon,\qquad
  P(\partial^-\ll_3)=-\varepsilon,\qquad
  P(\partial^+\ll_3)=-2\varepsilon,
\]
and so on, until we reach $\ll_7$, the segment that is alone in the $1$--handle which is not $H_\star$. At that point
\[
  P(\ll_6)=P(\partial^-\ll_7)=P(\partial^+\ll_7)=P(\ll_8)=-3\varepsilon.
\]
From there on, segments in the $1$--handles continue to have relative gain $-\varepsilon$ until we reach the second–to–last segment, $\ll_{15}$, which lies in $H_\star$. We have $P(\partial^-\ll_{15})=-5\varepsilon$, and since at the end we must return to prominence~$0$, we set
\[
  \Delta(\ll_{15}) = 5\varepsilon,
\]
so that $P(\partial^+\ll_{15})=0$. Consequently $P(\ll_{16})=0$, which agrees with $P(\partial^-\ll_1)$, as required.

For the discussion below, it is helpful to look at Figures~\ref{fig:example_algorithm2.1}, \ref{fig:example_algorithm2.2}, \ref{fig:example_algorithm3.1}, and \ref{fig:example_algorithm3.2} while reading. In these figures we see, successively, the Legendrian realizations of $\ll_1$, then of $\ll_1$ and $\ll_2$, and so on, until we obtain the Legendrian realization of the entire curve $\aa$.

\begin{figure}[htbp]
    \centering
    \begin{overpic}[scale=0.9]{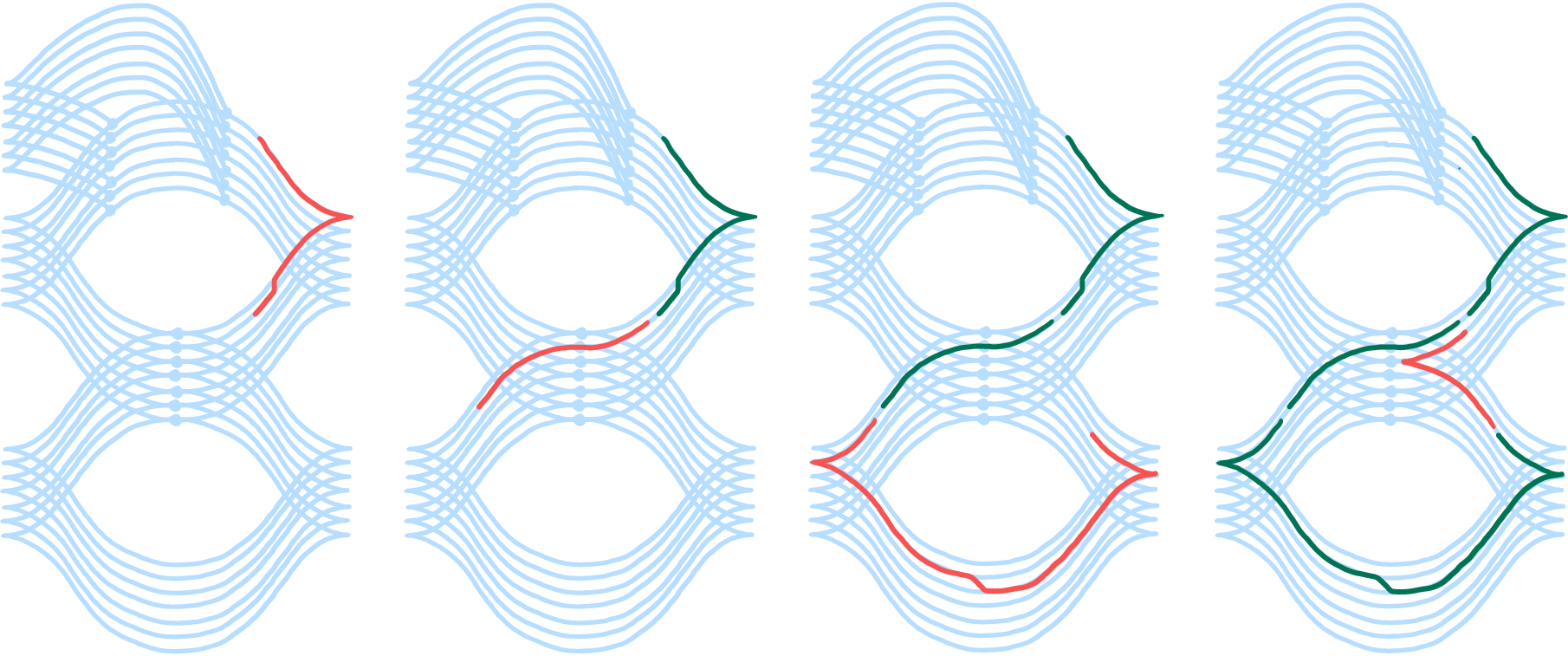}
      \put(20,36){$G\rightarrow$}
    \end{overpic}
    \caption{From left to right: the Legendrian realizations of the segments $\ll_1$ through $\ll_4$.}
    \label{fig:example_algorithm2.1}
\end{figure}

\begin{figure}[htbp]
    \centering
    \begin{overpic}[scale=0.9]{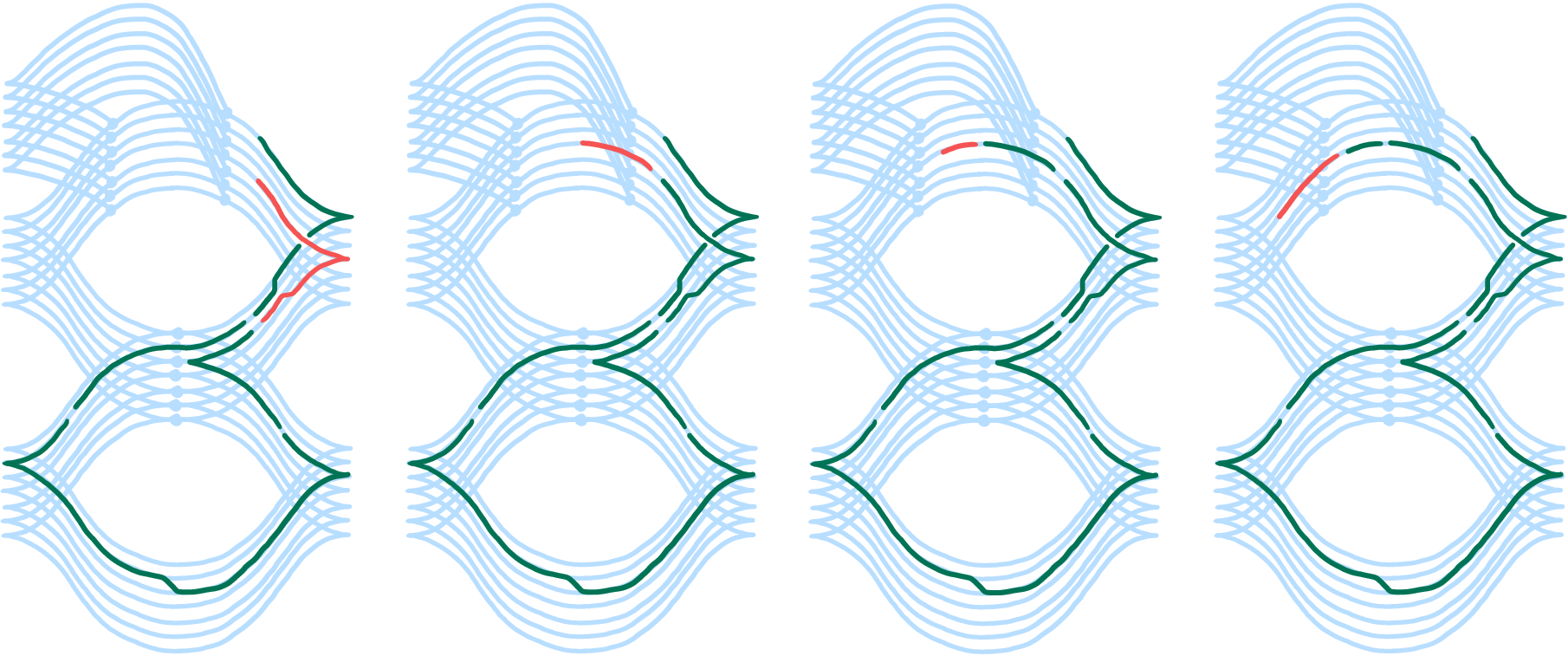}
    \end{overpic}
    \caption{From left to right: the Legendrian realizations of the segments $\ll_5$ through $\ll_8$.}
    \label{fig:example_algorithm2.2}
\end{figure}

\begin{figure}[htbp]
    \centering
    \begin{overpic}[scale=0.9]{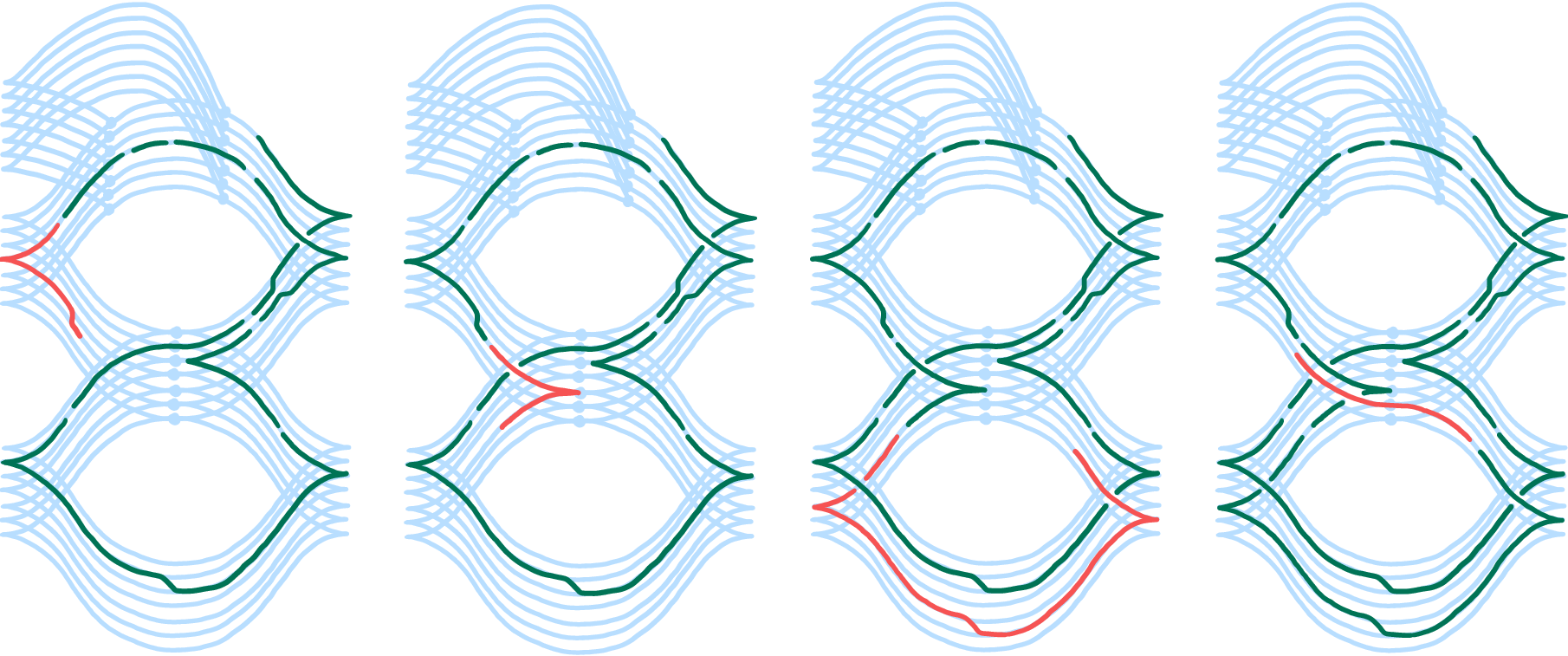}
    \end{overpic}
    \caption{From left to right: the Legendrian realizations of the segments $\ll_9$ through $\ll_{12}$.}
    \label{fig:example_algorithm3.1}
\end{figure}

\begin{figure}[htbp]
    \centering
    \begin{overpic}[scale=0.9]{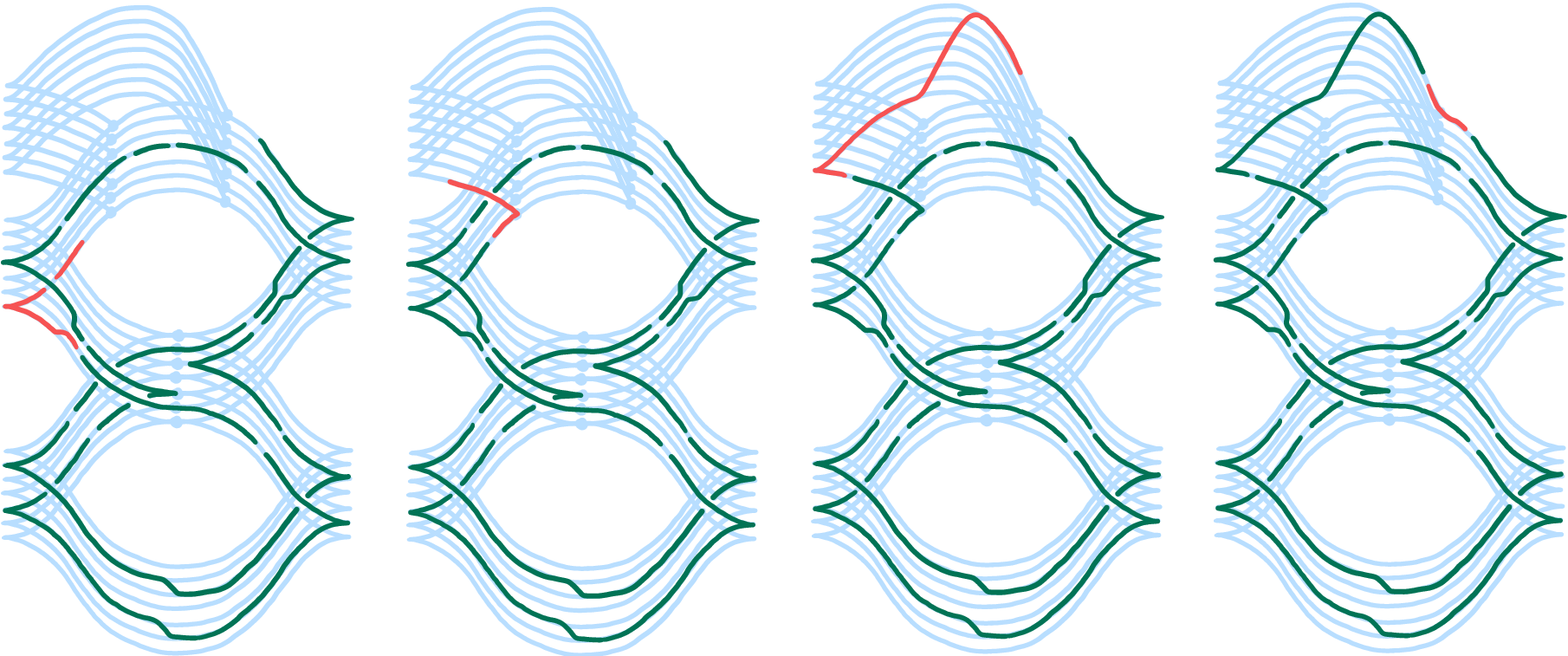}
    \end{overpic}
    \caption{From left to right: the Legendrian realizations of the segments $\ll_{13}$ through $\ll_{16}$.}
    \label{fig:example_algorithm3.2}
\end{figure}

To draw the Legendrian knot explicitly, we can apply the algorithm in a slightly different order from the one described in the general construction. Instead of first Legendrian realizing all segments in the $1$--handles and then all segments in the $0$--handles, we Legendrian realize the segments in the order in which they appear along $\aa$: first $\ll_1$, then $\ll_2$, and so on. This works in the present example because the prominence is monotone up to the second–to–last segment. In general one can still proceed in this way, but one may run out of space in the front projection if a new segment needs to be drawn between two previously drawn segments.

Denote by $G(s)$ the graph $G$ translated vertically by an amount $s$. We draw
\[
  G(0),\; G(-\varepsilon),\; G(-2\varepsilon),\;\dots,\; G(-5\varepsilon).
\]
These are the light blue graphs in Figures~\ref{fig:example_algorithm2.1}--\ref{fig:example_algorithm3.2}.

Consider first the Legendrian realization of $\ll_1$. Initially it coincides with a portion of an edge of $G(0)$, and then it ``jumps'' by an amount $-\varepsilon$ so that it becomes a portion of an edge of $G(-\varepsilon)$; see the first picture in Figure~\ref{fig:example_algorithm2.1}. The Legendrian realization of $\ll_2$ stays at prominence $-\varepsilon$. The Legendrian realization of $\ll_3$ then jumps by $-\varepsilon$ to land on $G(-2\varepsilon)$, and so on, until the Legendrian realization of the segment $\ll_{15}$ jumps from $G(-5\varepsilon)$ back to $G(0)$; see the second–to–last picture in Figure\ref{fig:example_algorithm3.2}.

In Figure~\ref{fig:example_algorithm4} we see the final Legendrian knot produced by the algorithm.

\begin{figure}[htbp]
    \centering
    \begin{overpic}[scale=1]{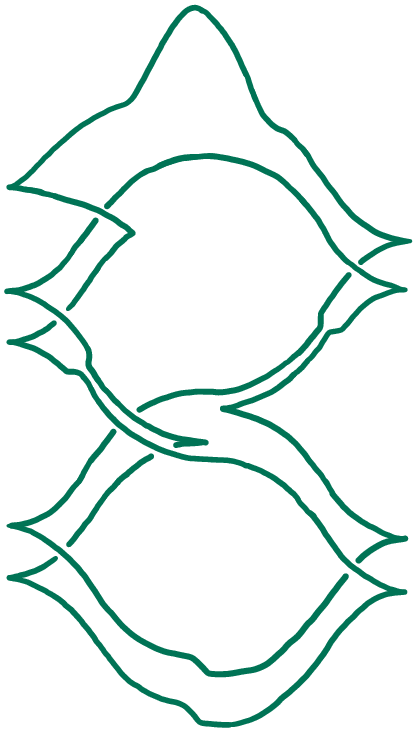}
    \end{overpic}
    \caption{The Legendrian realization of the curve $\aa$ from Figure~\ref{fig:example_algorithm1} obtained by applying the algorithm.}
    \label{fig:example_algorithm4}
\end{figure}

\subsection{Justification of the algorithm}\label{section:justif_algorithm}

The algorithm clearly produces a knot $L$ (and not a link): we connect 
the Legendrian realization of a segment $\ll_i$ to the Legendrian realization 
of the consecutive segment $\ll_{i+1}$ of $\aa$, and $\aa$ is a connected 
closed curve. Moreover, $L$ is a generic Legendrian knot, as all 
the front projections we have produced are generic and never violate the 
Legendrian condition.

We now make precise what remains to be shown and encapsulate it in a single statement.

\begin{proposition}[Validity of the algorithm]\label{prop:validity_algorithm}
Let $\G \subset (\R^3,\xist)$ be a generic Legendrian graph, let $\A$ be the abstract ribbon 
surface of~$\G$, and let $\aa$ be a homologically nontrivial simple closed curve 
on $\A$. Let $L$ be the Legendrian knot produced by the 
algorithm of Theorem~\ref{thm:algorithm}. Then the following hold:
\begin{itemize}
\item There exists a generic ribbon surface $\Sigma$ of $\G$ that can be perturbed 
      relative to the boundary, through surfaces transverse to $\partial_z$, so as to contain $L$.
\item The isotopy class of $L$ in the perturbation of $\Sigma$ agrees with the 
      isotopy class of $\aa$.
\item The perturbation of $\Sigma$ can be chosen to be arbitrarily $C^0$–small 
      if the algorithm is applied carefully.
\end{itemize}
\end{proposition}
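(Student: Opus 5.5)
The plan is to build the perturbed surface explicitly as a graph over the Lagrangian projection of a thin ribbon $\Sigma$. Since $\Sigma$ is transverse to $\partial_z$, its Lagrangian projection $\Sigma_L$ is an immersion. Any surface transverse to $\partial_z$ that is $C^0$-close to $\Sigma$ and agrees with it near $\partial\Sigma$ is then of the form $\Sigma^f=\{p+f(p)\partial_z : p\in\Sigma\}$ for a function $f\colon\Sigma\to\R$ supported away from $\partial\Sigma$. The straight-line family $\psi_t(p)=p+tf(p)\partial_z$ stays transverse to $\partial_z$, and it stays in $\Sigma\times[-\varepsilon,\varepsilon]$ as soon as $|f|\le\varepsilon$. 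So it suffices to find a curve $a\subset\Sigma$ isotopic to $\aa$ and a function $f$ with $|f|$ small such that $a+f\partial_z$ is exactly the Legendrian $L$ produced by the algorithm, and then extend $f$ from $a$ to all of $\Sigma$ with compact support in the interior. The extension is unobstructed once $L$ is known to lie over an embedded curve $a$ of $\Sigma$, i.e.\ once $L_L$ lifts to $\Sigma_L$ as an embedded curve in the right isotopy class.

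\textbf{Choice of $\Sigma$ and of $a$.} I would take $\Sigma$ to be the ribbon of Avdek's local construction, of width $w$ in the $x$-direction, with the $\varepsilon$-cancelled model on $1$-handles. I would then choose $a$ in the standard position of Step~1, with the $k$ segments in each $1$-handle being parallel copies of the core $\boldsymbol{e}$ displaced in the $x$-direction (Lagrangian picture) at offsets ordered as in the abstract handle. Over each point $p$ of the Lagrangian shadow, $L$ and $\Sigma$ have $z$-coordinates computed by integrating $-x\,dy$ from a basepoint. The required $f$ along $a$ is the difference of these two integrals. It is small because the Legendrian strands of Steps~5 and~6 are vertical translates of pieces of $\G$ by amounts of order $\varepsilon\cdot P$, joined by short straight front segments.

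\textbf{Key steps.} The first step checks that each front piece from Steps~5--7 has Lagrangian projection lying inside $\Sigma_L$ and isotopic there to the corresponding segment of $a$. A vertical translate of $\boldsymbol{e}^\pm$ has the same Lagrangian projection as the core. The connecting straight segment of slope $s$ has $x=-s$, so it projects to a short transverse arc inside the handle provided the slopes, of order $\Delta\varepsilon/\text{length}$, are smaller than $w$. The second step shows that the ordering convention of Step~5 (by prominence, ties broken by the other endpoint), together with the $0$-handle stacking, yields Lagrangian projections that can be separated into disjoint embedded arcs in $\Sigma_L$ matching the combinatorial order of $\aa$'s segments. Since the braid in a $1$-handle only permutes heights, its Lagrangian projection is a family of transverse arcs, and I expect to isotope it inside the strip to the parallel family while recording the crossings.

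\textbf{Main obstacle: closing up.} The real content, and what I expect to be hardest, is the third step: showing that the $z$-offset $f$ is single-valued along the closed curve $a$, so that the lifted pieces glue. I would identify the jump $\Delta(\ll)$ with $\varepsilon^{-1}\int_{\ll}x\,dy$ relative to the core, up to the handle's width scaling. Then the Legendrian closing condition $\oint x\,dy=0$ becomes $\sum_i\Delta(\ll_i)=0$, which Remark~\ref{rk:displ_sum_zero} guarantees. The hard part is making this identification exact rather than heuristic. My first attempt would be to reverse the logic: given the front of $L$, read off its $x$-coordinates and declare those to define $a$ inside a widened ribbon. The work then reduces to showing that this curve is embedded and isotopic to $\aa$. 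I would argue embeddedness from the prominence ordering, which is exactly the strict total order by which copies are stacked. Isotopy I would argue handle by handle using the previous step. Finally, every quantity scales linearly in $\varepsilon$, which gives the $C^0$-smallness claimed in the third bullet.
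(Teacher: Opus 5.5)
Your outer framework is fine, and more explicit than the paper's. Writing the perturbed surface as a vertical graph $\Sigma^f$ and using $p\mapsto p+tf(p)\partial_z$ is exactly what Lemma~\ref{lem:glue_segments}(3) needs; keep $|f|$ below the vertical gap between sheets of $\Sigma$ over a common point of the $(x,y)$--plane so that it stays an embedding.

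The gap is in placing the curve, which is the step you yourself call the hard part. Neither of your two versions supplies it.

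\emph{Forward version.} The $x$--displaced copies $(x_{\boldsymbol e}+c,\,y_{\boldsymbol e})$ of the core are not parallel push-offs on $\Sigma$. At a cusp of $\boldsymbol e$ the Lagrangian projection of the core is tangent to $\partial_x$, so such copies cross the core and each other there. Their relative gain is $\int_{\ll}(x-x_{\boldsymbol e})\,dy=c\,\bigl(y_{\boldsymbol e}(1)-y_{\boldsymbol e}(0)\bigr)$. This is spread over the whole handle and governed by the net $y$--travel of the core; it vanishes if the core returns to its initial $y$. So it cannot be identified with $\varepsilon\Delta(\ll)$ in general, and the lifted strands are sheared copies of the core rather than translates with a jump at the braid.

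\emph{Reverse version.} The $k$ vertical translates of $\boldsymbol e^{\pm}$ in Step~5 have \emph{identical} Lagrangian projection. Over a generic point of that projection, $\Sigma$ (and hence every $\Sigma^f$) has a single sheet, so at most one translate can lie on $\Sigma^f$. The curve read off from $x=-dz/dy$ runs $k$ times along $\boldsymbol e^{\pm}$. The prominence order is an order in $z$, invisible in the $(x,y)$--plane, so it cannot give embeddedness. What separates strands on the surface is their transverse position in the handle, which the algorithm encodes in the jump $\Delta$, not in $P$. This also undercuts your first key step: each front piece individually projects into $\Sigma_L$, but the pieces in one handle do not project to disjoint arcs, so they are not the lifts of the disjoint segments of an embedded $a$.

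What is missing is the paper's localization mechanism:
\begin{enumerate}
\item Take $\Sigma$ with exactly one hyperbolic point in each $1$--handle, placed where the braid sits (Lemma~\ref{lem:choosing_ribbon}).
\item Isotope each segment of $\aa$ to run along a leaf of the characteristic foliation, except for a short arc crossing the unstable separatrix (Lemma~\ref{lem:1handles_displacement}). Leaves are Legendrian, so off that arc the lift is a vertical translate of a leaf ($h_i$ constant). Different segments follow different leaves, so embeddedness is inherited from $\aa$.
\item The whole jump $D_i$ then occurs at the crossing. Its sign is fixed by the direction of transversality and its size by the distance from the centre, so it can be tuned to exactly $\varepsilon_1\Delta(\ll_i)$. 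A uniform push in $H_\star$ realizes the modified gains.
\item In $0$--handles, which your proposal does not treat, a bump near the vertex makes the net jump exactly zero (Lemma~\ref{lem:0handles}).
\end{enumerate}
Only then does $\sum_i\Delta(\ll_i)=0$ give closure of the Legendrian lift $L'$ of $\aa$ itself. $\Sigma$ is then perturbed vertically to contain $L'$, which is your $\Sigma^f$ step. Finally, $L'$ is matched with the algorithm's output by comparing the ordering of strands at the two ends of each braid and in each $0$--handle. That identification holds up to a small Legendrian isotopy of the front, not literally.
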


The rest of this section is devoted to the proof of Proposition~\ref{prop:validity_algorithm}.
We split the argument into several lemmas corresponding to the subsequent subsections.

\subsubsection{Choosing the ribbon surface}

\begin{lemma}\label{lem:choosing_ribbon}
There exists a generic ribbon surface $\Sigma$ of~$\G$ with the following properties:
\begin{itemize}
\item In each $1$--handle there is exactly one hyperbolic singularity of the characteristic foliation.
\item This hyperbolic singularity is located precisely in the region where the braid of Step~5 
      is placed (compare Figure~\ref{fig:example_1handles1} and Figure~\ref{fig:example_1handles4}).
\end{itemize}
\end{lemma}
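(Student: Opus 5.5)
Start from the explicit ribbon surface obtained by the local procedure adapted from Avdek's algorithm (Figures~\ref{fig:ribbon_local} and~\ref{fig:ribbon_local2}). On each local piece its characteristic foliation has exactly one singular point: a positive hyperbolic point on each non-vertical edge piece of type~(a), and a positive elliptic point on each crossing, cusp and vertex piece. This surface is generic by construction. Each $0$--handle contains exactly one elliptic point, at the vertex. Each $1$--handle, which corresponds to a subinterval of an edge of $\G$, contains an alternating chain of elliptic and hyperbolic points along its Legendrian core $\boldsymbol{e}$. This chain has one more hyperbolic point than elliptic points, since every maximal arc of the front between cusps and crossings contributes a hyperbolic point. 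The first step is to make this count precise for the subdivided edge, possibly after inserting extra non-vertical pieces, so that every $1$--handle has at least one hyperbolic point.

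Next, use the Elimination Lemma \cite[Lemma~2.15]{Giroux00}, in the local form of Figure~\ref{fig:ribbon_simplif_local}, to cancel elliptic–hyperbolic pairs inside each $1$--handle. Each cancellation is a $C^0$--small perturbation supported near the pair, away from $\partial\Sigma$. It keeps the surface transverse to $\partial_z$, so the result is still a ribbon surface of $\G$ and still generic. Since positive elliptic and positive hyperbolic points along a Legendrian arc of the foliation can be cancelled in adjacent pairs, we proceed inductively along the chain. After cancelling all elliptic points in a $1$--handle, exactly one hyperbolic point survives, as observed after Figure~\ref{fig:ribbon_simpli_ex}. This gives the first bullet.

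For the second bullet we move the surviving hyperbolic point to the chosen location. The braid of Step~5 sits in the small open subsegment removed from $\boldsymbol{e}$, which was chosen disjoint from cusps and crossings. That subsegment therefore lies in a piece of type~(a), where a hyperbolic point was already present. We choose the order of eliminations so that this particular hyperbolic point is never cancelled: cancel the pairs on each side of it, working outward toward the attaching regions. As an alternative, apply Giroux's flexibility Theorem~\ref{thm:Giroux_flexibility}, relative to a neighbourhood of $\partial\Sigma=\Gamma_\Sigma$ (using Lemma~\ref{lem:ribbon_convex}), to replace the foliation on the $1$--handle by one that is divided by the boundary and whose unique hyperbolic point lies on $\boldsymbol{e}$ inside the braid region. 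The isotopy then stays among surfaces transverse to $\partial_z$ after the identification of Lemma~\ref{lem:ribbon_convex}.

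I expect the main obstacle to be the ordering argument. One must check that the hyperbolic point in the braid region can always be kept, that is, that the remaining elliptic points on each side can be paired with hyperbolic points on that same side. This needs the alternation count to work out on each side separately. If it fails on one side, I would first try inserting an extra canceling pair, an elliptic and a hyperbolic point created by the reverse of elimination, to fix the parity. Failing that, I would fall back on the flexibility argument, which sidesteps the combinatorics at the cost of losing the explicit front-projection picture.
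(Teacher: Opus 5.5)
Your proposal is correct and is essentially the paper's argument: build the ribbon with the local procedure, cancel elliptic--hyperbolic pairs inside each $1$--handle using the simplifications of Figure~\ref{fig:ribbon_simplif_local}, and then put the surviving hyperbolic point in the braid region (the paper slides the survivor along the core, while you choose which one survives). The ordering issue you flag is not an obstacle: the singular points on the core of a $1$--handle alternate hyperbolic, elliptic, \dots, elliptic, hyperbolic, so any hyperbolic point has equally many elliptic and hyperbolic points on each side to pair off, and a small slide inside its type~(a) piece then moves it into the removed subsegment.
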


\begin{proof}
This can be achieved by first applying the algorithm that produces a ribbon surface from a Legendrian graph, then applying the simplifications of Figure~\ref{fig:ribbon_simplif_local}, and finally sliding the remaining hyperbolic points along the core of each $1$--handle until each lies precisely in the region where the braid of Step~5 is placed.
\end{proof}

\begin{example}\label{ex:ribbon_comp}
    The ribbon in Figure~\ref{fig:example_1handles1} is compatible with the braid in Figure~\ref{fig:example_1handles4}.
    By contrast, the ribbons in Figure~\ref{fig:definition_sigma} are not: the first has too many singularities in the $1$--handle, and in the other two the remaining hyperbolic point sits in the wrong place relative to the braid.
    
    \begin{figure}[htbp]
    \centering
    \begin{overpic}[scale=0.9]{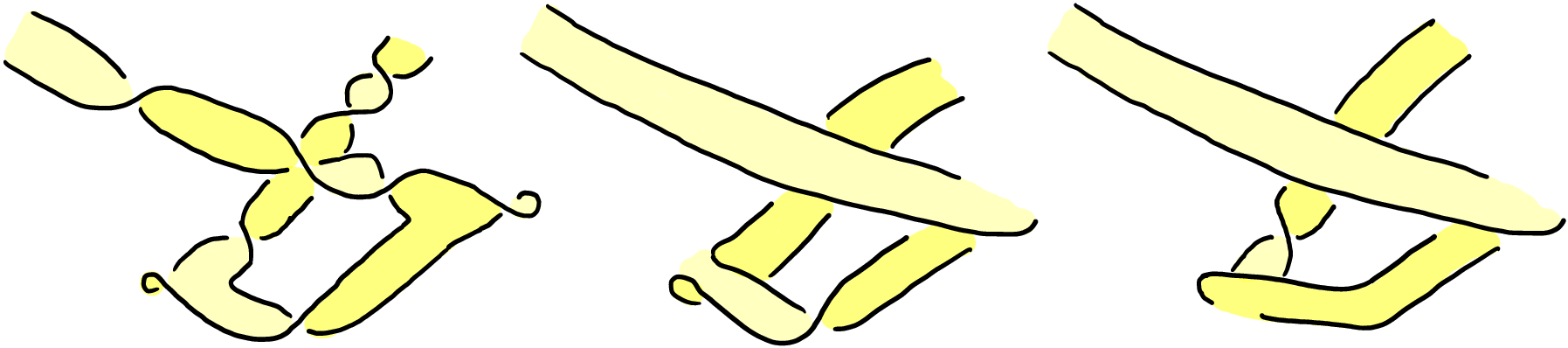}
    \end{overpic}
    \caption{Ribbon surfaces that are not adapted to the braid in Figure~\ref{fig:example_1handles4}: too many singularities (left) or hyperbolic point in the wrong region (middle and right).}\label{fig:definition_sigma}
    \end{figure}
\end{example}

The choice of where to place the braid in Step~5 determines a preferred representative $\Sigma$ of the abstract ribbon, but Theorem~\ref{thm:algorithm} imposes no restriction on~$\Sigma$ beyond genericity, so this choice causes no issues.

\subsubsection{Legendrian realization of individual segments}

Let $\ll_i\colon[0,1]\to(\R^3,\xist)$ be one of the segments of~$\aa$, parametrized compatibly with its orientation. Denote its components by
\[
\ll_i(s) = \big((\ll_i)_x(s),(\ll_i)_y(s),(\ll_i)_z(s)\big).
\]

We Legendrian realize $\ll_i$ by keeping its Lagrangian projection $(x,y)$ and adjusting its $z$–coordinate. Given a number $z_i\in\R$, define
\[
\overline{(\ll_i)_z}(s)
:= z_i - \int_0^s (\ll_i)_x(u)\,(\ll_i)_y'(u)\,du,
\]
and set
\[
\overline{\ll_i}(s)
:= \big((\ll_i)_x(s),(\ll_i)_y(s),\overline{(\ll_i)_z}(s)\big).
\]
By construction, $\overline{\ll_i}$ is a Legendrian curve with $\overline{(\ll_i)_z}(0)=z_i$.

\begin{remark}\label{patch_together}
    At this stage, the value of $z_i$ is arbitrary for each segment, so the family $\{\overline{\ll_i}\}_i$ does not yet patch into a closed Legendrian curve. We are currently only interested in the local shape of the front projection of $\overline{\ll_i}$ inside a handle; the global choice of the $z_i$ will be fixed later when we glue the segments together.
\end{remark}

Next, we describe how these segments $\overline{\ll_i}$ look and why they match (individually) the segments produced by the algorithm.

\subsubsection{1--handles}

\begin{lemma}\label{lem:1handles_displacement}
Let $H$ be a $1$--handle of the ribbon surface $\Sigma$, and let 
$\ll_i$ be the segments of $\aa$ contained in~$H$. After isotoping $\aa$ 
inside~$H$, we can assume the Legendrian realizations 
$\overline{\ll_i}$ satisfy the following:
\begin{enumerate}
\item For $H \neq H_\star$, the function
      \[
        h_i(s) := \overline{(\ll_i)_z}(s) - (\ll_i)_z(s)
      \]
      is constant on $[0,\tfrac{1}{3}]$ and $[\tfrac{2}{3},1]$, and monotone on 
      $[\tfrac{1}{3},\tfrac{2}{3}]$:
      \begin{itemize}
        \item if $\ll_i$ is the central (already Legendrian) segment, then $h_i$ is constant everywhere;
        \item otherwise, $h_i$ is strictly decreasing on $[\tfrac{1}{3},\tfrac{2}{3}]$ if $\ll_i$ is positively transverse, and strictly increasing if it is negatively transverse;
      \end{itemize}
\item The “jump”
      \[
        D_i := h_i(1) - h_i(0) 
        = h_i\Big(\tfrac{2}{3}\Big) - h_i\Big(\tfrac{1}{3}\Big)
      \]
      can be arranged, after a suitable isotopy of $\aa$ inside $H$, to satisfy
      \[
        D_i = \varepsilon_1\,\Delta(\ll_i),
      \]
      for some arbitrarily small $\varepsilon_1>0$.
\item For the distinguished handle $H_\star$, the same relation 
      $D_i = \varepsilon_1\,\Delta(\ll_i)$ holds after the additional adjustment 
      of the $\Delta(\ll_i)$ prescribed in~\eqref{eq:redefine_displ} and the 
      small isotopy described in Figure~\ref{fig:segments_H_1}.
\end{enumerate}
\end{lemma}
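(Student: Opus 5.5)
The idea is to compute $h_i$ directly from the formula defining $\overline{(\ll_i)_z}$. On the ribbon $\Sigma$ from Lemma~\ref{lem:choosing_ribbon} we fix coordinates on the $1$--handle $H$. Near the Legendrian core $\boldsymbol{e}$, the surface $\Sigma$ is a graph over its Lagrangian projection. We parametrize $H$ by $(s,w)\in[0,1]\times[-1,1]$, with $s$ along $\boldsymbol{e}$ and $w$ transverse. In the Lagrangian projection the point $(s,w)$ is sent to $\boldsymbol{e}_L(s)+w\,\nu(s)$, where $\nu$ is a fixed transverse direction to $\boldsymbol{e}_L$ that is roughly $\partial_x$. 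The $z$--coordinate of $\Sigma$ is given by a function $f(s,w)$ with $f(s,0)=\boldsymbol{e}_z(s)$.

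For a segment $\ll_i(s)=(s,w_i(s))$ (reversing the parametrization when $\ll_i$ runs against $\boldsymbol{e}$), differentiating the definition gives
\[
h_i'(s)= -(\ll_i)_x(\ll_i)_y' - \tfrac{d}{ds}f(s,w_i(s)).
\]
This quantity vanishes along $\boldsymbol{e}$ because $\boldsymbol{e}$ is Legendrian. In general it equals (minus) the contact form $\alphast$ evaluated on the tangent vector of $\ll_i$. The first step is therefore to use the isotopy freedom inside $H$ to choose each $\ll_i$ so that $h_i$ has the required constant/monotone structure.

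Concretely, I would isotope $\ll_i$ inside $H$ so that on $[0,\tfrac13]\cup[\tfrac23,1]$ it follows a leaf of the characteristic foliation of $\Sigma$. Then $\alphast(\dot\ll_i)=0$ there, so $h_i$ is constant. In the middle third, where the unique hyperbolic point sits (this is why Lemma~\ref{lem:choosing_ribbon} places it under the braid), the segment $\ll_i$ crosses leaves at constant transverse position $w_i$. On this piece, $\alphast(\dot\ll_i)$ has a definite sign determined by which side of $\boldsymbol{e}$ we are on and by the direction of travel. Near the hyperbolic point the foliation is modelled on $\ker(dz+x\,dy)$ restricted to a graph, and leaves on the two sides of the stable separatrix $\boldsymbol{e}$ diverge. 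So a path at constant $w>0$ or $w<0$ is positively or negatively transverse. This gives (1), with $h_i\equiv$ constant when $w_i=0$.

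For (2), the jump is
\[
D_i=-\int_{1/3}^{2/3}\alphast(\dot\ll_i)\,ds,
\]
which is the signed area swept between $\ll_i$ and $\boldsymbol{e}$ in the middle region (by Stokes, it equals $\int d\alphast$ over that strip). This depends continuously and monotonically on $w_i$, vanishes at $w=0$, and flips sign with orientation. Ordering the segments by height in the abstract handle and choosing $w_i$ appropriately, the jump can be made equal to $\varepsilon_1\widetilde\Delta(\ll_i)$ up to the sign $\delta_i$, which is exactly $\varepsilon_1\Delta(\ll_i)$. The value $\varepsilon_1$ is made small by shrinking the $w_i$. One subtlety: $\widetilde\Delta$ only takes integer values, and when $k$ is even two adjacent strands have the same absolute value. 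I would handle this by choosing the $w_i$ freely as long as the order is preserved, since monotonicity in $w$ makes any prescribed strictly ordered target attainable. Equal targets are avoided for strands on the same side, and for strands on opposite sides the symmetric values are fine.

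For (3), in $H_\star$ the targets $\varepsilon_1\Delta$ are shifted by $\mp\varepsilon_1\theta/(k_+-k_-)$, with the sign depending on orientation. Here the simple "constant $w$" curves no longer suffice, because the shifted values may break the order. The plan is to let the curve wiggle in $w$ inside the middle third, as in Figure~\ref{fig:segments_H_1}. Since $D_i$ is given by an area integral, adding a small loop-like excursion toward the far side adds any prescribed amount of signed area, keeping the segments disjoint and embedded by stacking excursions in nested fashion. I expect this step to be the main obstacle: one must show that arbitrary real jumps can be realized simultaneously for all $k$ segments without crossings. This is where the fact that $k_+\neq k_-$ and the area interpretation are used. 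I would try nested excursions ordered by the height of the segments.
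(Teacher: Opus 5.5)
Your argument for (1) and (2) is essentially the paper's. Segments follow leaves of the characteristic foliation away from the unstable separatrix of the single hyperbolic point, cross that region by short transverse arcs, and the jump is controlled by the distance from the core, with its sign flipped by the orientation.

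The gap is in (3). You assert that the shifted targets in $H_\star$ ``may break the order'', and then propose nested excursions, which you leave unverified. The premise is false, and once this is seen no new construction is needed. Let $G(w)$ be the jump of a transverse arc at position $w$ traversed in the direction of $\boldsymbol{e}$. Then a segment with orientation sign $\delta_i$ at position $w_i$ has jump $\delta_i G(w_i)$. The redefinition~\eqref{eq:redefine_displ} replaces $\Delta(\ll_i)=\delta_i\widetilde{\Delta}(\ll_i)$ by $\delta_i\bigl(\widetilde{\Delta}(\ll_i)-c\bigr)$, where $c=\theta/(k_+-k_-)$ is the same for every segment in $H_\star$. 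So the condition $D_i=\varepsilon_1\Delta(\ll_i)$ becomes $G(w_i)=\varepsilon_1\bigl(\widetilde{\Delta}(\ll_i)-c\bigr)$.

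These orientation-free targets are the old, strictly height-ordered values shifted by a common constant, so they are still strictly ordered by height. You already argued that $G$ is continuous and strictly monotone with $G(0)=0$. Hence for $\varepsilon_1$ small these equations have solutions $w_i$, ordered exactly as the segments are, so the arcs stay disjoint. To first order $G$ is linear in $w$, so this is just the rigid push of all arcs up or down shown in Figure~\ref{fig:segments_H_1}. This is also consistent with (1) being stated only for $H\neq H_\star$: after the push, a central segment (if $k$ is odd) no longer lies on the core. Finally, $k_+\neq k_-$ plays no role in realizing the jumps; it only makes the division in~\eqref{eq:redefine_displ} legitimate.
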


\begin{proof}
Let $H$ be a $1$--handle distinct from~$H_\star$. Isotope the segments of $\aa$ inside $H$ so that:
\begin{itemize}
    \item outside a small neighborhood of the unstable separatrix of the unique hyperbolic singularity of the characteristic foliation, each segment is tangent to the leaves of the foliation;
    \item inside that neighborhood, the segments become short arcs transverse to the foliation, arranged symmetrically around the hyperbolic point as in Figure~\ref{fig:segment_1handles}(right).
\end{itemize}

\begin{figure}[htbp]
    \centering
    \begin{overpic}[scale=0.8]{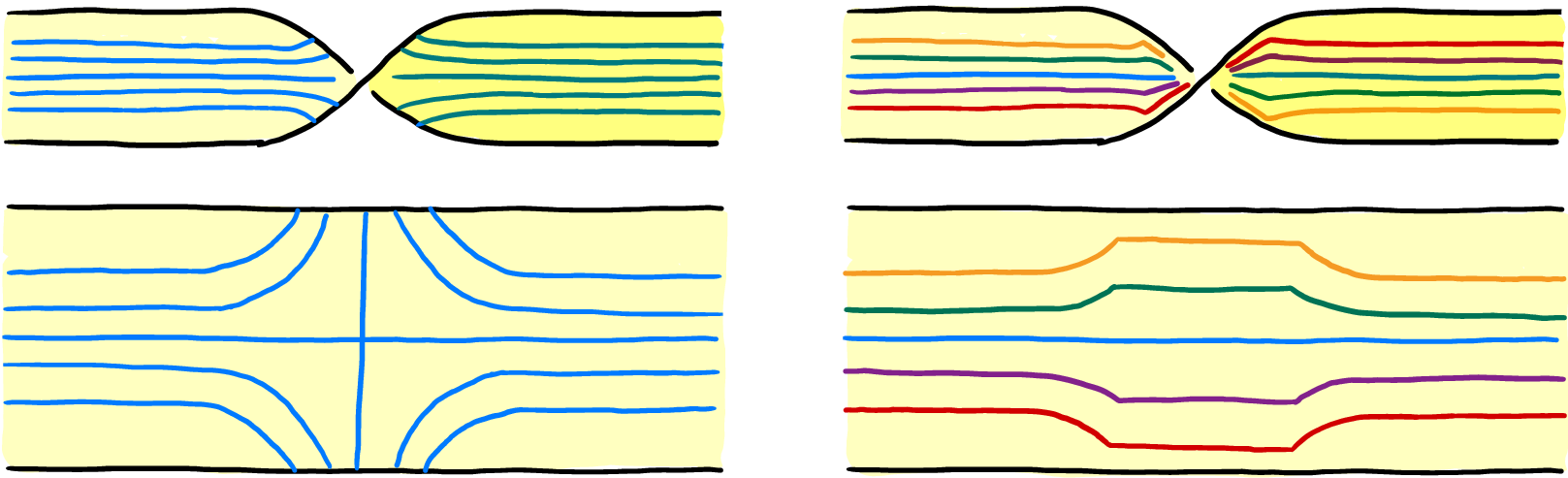}
    \end{overpic}
    \caption{Left: a $1$--handle and its characteristic foliation, with a single hyperbolic point. Right: the segments of $\aa$ arranged along the leaves, except near the unstable separatrix. Top pictures: front projections; bottom pictures: schematic versions.}\label{fig:segment_1handles}
\end{figure}

This clearly implies (1).

By pushing the segments of $\aa$ closer to the central one we can make all $|D_i|$ as small as desired. Moreover, segments that are further away from the center have larger $|D_i|$. In particular, for any small $\varepsilon_1>0$ we can arrange, after a suitable isotopy of $\aa$ inside $H$, that
\[
D_i =
\begin{cases}
-|\widetilde{\Delta}(\ll_i)|\,\varepsilon_1, & \text{if $\ll_i$ is positively transverse},\\[0.3em]
+|\widetilde{\Delta}(\ll_i)|\,\varepsilon_1, & \text{if $\ll_i$ is negatively transverse}.
\end{cases}
\]
It follows that
\begin{equation}\label{eq:displ_prop_to}
D_i = \varepsilon_1\,\Delta(\ll_i),
\end{equation}
because:
\begin{itemize}
    \item a positively transverse segment $\ll_i$ with $\widetilde{\Delta}(\ll_i)$ negative is oriented as $\boldsymbol{e}$;
    \item a positively transverse segment $\ll_i$ with $\widetilde{\Delta}(\ll_i)$ positive is not oriented as $\boldsymbol{e}$;
    \item a negatively transverse segment $\ll_i$ with $\widetilde{\Delta}(\ll_i)$ negative is not oriented as $\boldsymbol{e}$;
    \item a negatively transverse segment $\ll_i$ with $\widetilde{\Delta}(\ll_i)$ positive is oriented as $\boldsymbol{e}$.
\end{itemize}

This proves (2).

When $H=H_\star$, the values of $\Delta(\ll_i)$ were modified by~\eqref{eq:redefine_displ}. To keep~\eqref{eq:displ_prop_to} valid, we first arrange the segments in $H_\star$ symmetrically as in Figure~\ref{fig:segment_1handles}, and then apply a small isotopy that pushes all of them slightly up or down as in Figure~\ref{fig:segments_H_1}. This breaks the symmetry but allows us to realize the adjusted values of~$\Delta$ as in~\eqref{eq:displ_prop_to}.

\begin{figure}[htbp]
    \centering
    \begin{overpic}[scale=0.8]{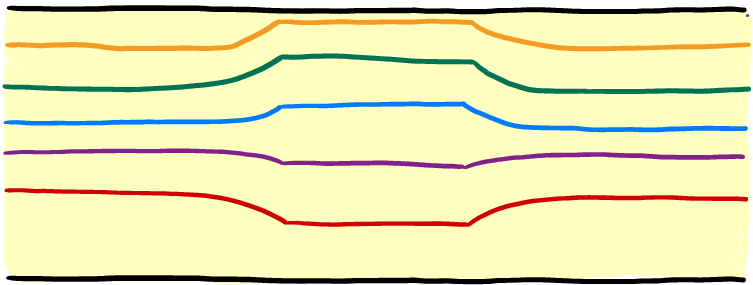}
    \end{overpic}
    \caption{Segments of $\aa$ in the distinguished handle $H_\star$ after the additional isotopy used to implement the modified gains.}\label{fig:segments_H_1}
\end{figure}

This proves (3).
\end{proof}

A good way to understand how $\overline{\ll_i}$ looks is as follows. Consider the ribbon surface $\Sigma$. Any surface obtained by translating $\Sigma$ in the $\partial_z$ direction is again a ribbon surface with all the properties of $\Sigma$, as $\partial_z$ is a contact vector field. Denote by $\Sigma(q)$ the ribbon surface $\Sigma$ translated vertically by an amount $q \in \mathbb{R}$.

The Legendrian segment $\overline{\ll_i}|_{[0, \frac{1}{3}]}$ is embedded in the ribbon surface $\Sigma(h_i(0))$, as this segment is just a copy of $\ll_i|_{[0, \frac{1}{3}]}$ translated vertically by $h_i(0)$. Similarly, the Legendrian segment $\overline{\ll_i}|_{[\frac{2}{3}, 1]}$ is embedded in the ribbon surface $\Sigma(h_i(1))$. On the other hand, the Legendrian segment $\overline{\ll_i}|_{[\frac{1}{3}, \frac{2}{3}]}$ is not a simple vertical translation of $\ll_i|_{[\frac{1}{3}, \frac{2}{3}]}$. Instead, it is a Legendrian segment that ``jumps'' by an amount $D_i$ from the ribbon surface $\Sigma(h_i(0))$ to the ribbon surface $\Sigma(h_i(1))=\Sigma(h_i(0)+D_i)$. See Figure~\ref{fig:H_and_D} for an example.

\begin{figure}[htbp]
    \centering
    \begin{overpic}[scale=1]{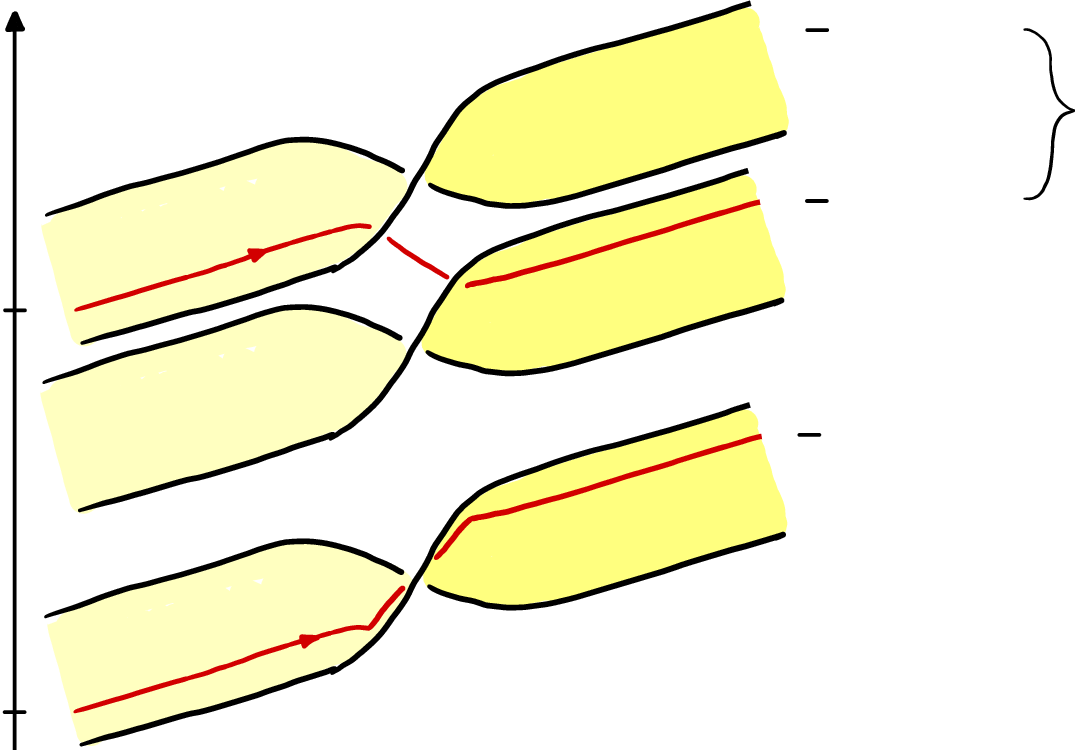}
        \put(-3,65){$z$}
        \put(-15,39.5){$\overline{(\ll_i)_z}(0)$}
        \put(-15,2.5){$(\ll_i)_z(0)$}
        \put(79,50){$\overline{(\ll_i)_z}(1)$}
        \put(79,28){$(\ll_i)_z(1)$}
        \put(101,58.5){$|D_i|$}
        \put(10,7){$\ll_i$}
        \put(10,45){$\overline{\ll_i}$}
    \end{overpic}
    \caption{Legendrian realization $\overline{\ll_i}$ of a segment $\ll_i$ in a $1$--handle. The lower surface is $\Sigma$, the others are vertical translates.}\label{fig:H_and_D}
\end{figure}

For example, if we take the segments in Figure~\ref{fig:segment_1handles} (right) as inputs, we can imagine that their Legendrian realizations will have ``jumps'' as shown in Figure~\ref{fig:segment_1handles1}, where the external segments (red and orange) ``jump'' more than the internal segments (purple and green). The blue segment is not drawn, as its Legendrian realization will simply be a copy of itself translated vertically by a certain amount.

\begin{figure}[htbp]
    \centering
    \begin{overpic}[scale=0.8]{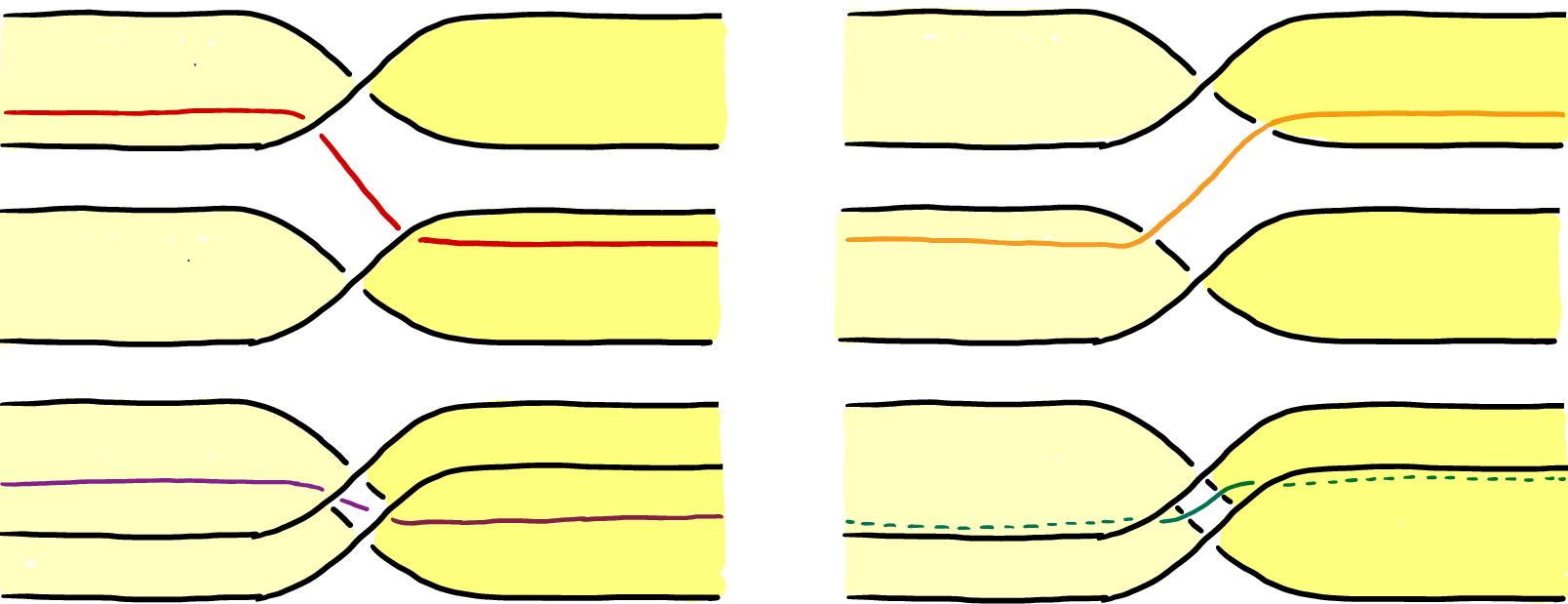}
    \end{overpic}
    \caption{The Legendrian realization of the segments in Figure~\ref{fig:segment_1handles}, right.}\label{fig:segment_1handles1}
\end{figure}

\subsubsection{0--handles}

\begin{lemma}\label{lem:0handles}
Let $H$ be a $0$--handle of $\Sigma$, and let $\ll_i$ be the segments of $\aa$ 
contained in~$H$. After isotoping the segments to follow the leaves of the 
characteristic foliation (except in a small neighborhood of the vertex 
$\boldsymbol{v}$) and modifying each segment near~$\boldsymbol{v}$ by a small 
“bump” as in Figure~\ref{fig:segment_0handles1}, the associated functions
\[
h_i(s) := \overline{(\ll_i)_z}(s) - (\ll_i)_z(s)
\]
satisfy
\[
h_i(0) = h_i(1)
\]
for all segments in $0$--handles.
\end{lemma}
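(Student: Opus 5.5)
The plan is to compute $h_i(1)-h_i(0)$ as a line integral of the contact form along $\ll_i$, and then use the freedom to add a bump near $\boldsymbol{v}$ to make that integral vanish. Differentiating the definition of $\overline{(\ll_i)_z}$ gives
\[
h_i'(s) = -(\ll_i)_x(s)\,(\ll_i)_y'(s) - (\ll_i)_z'(s) = -\alphast\big(\ll_i'(s)\big),
\]
so that
\[
h_i(1)-h_i(0) = -\int_{\ll_i}\alphast = -\int_{\ll_i}\beta, \qquad \beta:=\alphast|_\Sigma .
\]
Hence $h_i$ is constant on every subarc of $\ll_i$ tangent to the characteristic foliation $\Sigma_\xi$, because the leaves are Legendrian. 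The statement therefore reduces to arranging $\int_{\ll_i}\beta=0$ for each segment in a $0$--handle, using an isotopy supported near $\boldsymbol{v}$.

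First I will isotope each $\ll_i$ inside $H$ so that, outside a small disk $B$ around $\boldsymbol{v}$, it runs along leaves of $\Sigma_\xi$. On a generic ribbon these leaves are the radial leaves emanating from the positive elliptic point at $\boldsymbol{v}$, parallel to the skeleton segments $\boldsymbol{e}_a$ and $\boldsymbol{e}_b$ through which $\ll_i$ enters and exits (Figure~\ref{fig:generic_ribbon}). Only the short subarc $\ll_i\cap B$, which crosses from the sector of $\boldsymbol{e}_a$ to that of $\boldsymbol{e}_b$, is transverse to the foliation, so $\int_{\ll_i}\beta=\int_{\ll_i\cap B}\beta$ is a small, computable number.

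Next I will modify $\ll_i\cap B$ by a bump as in Figure~\ref{fig:segment_0handles1}, pushing the arc toward or away from $\boldsymbol{v}$ while keeping its endpoints on $\partial B$ fixed. If $\ll_i'$ is the bumped arc and $D$ is the region between $\ll_i$ and $\ll_i'$, Stokes gives
\[
\int_{\ll_i'}\beta-\int_{\ll_i}\beta=\pm\int_D d\beta .
\]
By Lemma~\ref{lem:ribbon_convex}, $d\beta=d\alphast|_\Sigma$ is a positive area form on $\Sigma$. The sign is determined by the side to which we push, and the magnitude varies continuously and monotonically with the size of the bump. In the limiting configuration where the arc is pushed onto the two Legendrian rays $\boldsymbol{e}_a\cup\boldsymbol{e}_b$ through $\boldsymbol{v}$, the integral is exactly $0$. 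By continuity, an arc running just beside $\boldsymbol{v}$ with an appropriately sized bump achieves $\int\beta=0$ exactly. Equivalently, in the Lagrangian projection the bump adds a small lobe of signed area $\oint x\,dy$ cancelling the defect.

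The main obstacle is doing this simultaneously for all segments in $H$ while keeping them pairwise disjoint and embedded, and making both signs available. I will handle this by nesting: order the segments by their position around $\boldsymbol{v}$, so that segments connecting the same pair of sectors are parallel and nested. Then adjust the innermost segment first, and adjust the outer ones with bumps that stay in the annular regions between consecutive segments, using that $d\beta>0$ makes the integral strictly monotone in the bump parameter. Since all modifications are supported in the small disk $B$ and enclose arbitrarily small area, they can be taken $C^0$-small, which is compatible with item (3) of Theorem~\ref{thm:algorithm}.
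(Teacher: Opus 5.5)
Your proposal is correct and follows the paper's argument: run each segment along the radial Legendrian leaves so that $h_i$ can only change near $\boldsymbol{v}$, then cancel the small residual mismatch with a bump there. Your identity $h_i'=-\alphast(\ll_i')$, combined with Stokes and the positivity of $d\beta$, makes explicit what the paper only asserts.

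One imprecision is the appeal to continuity at the limiting configuration through $\boldsymbol{v}$. An arc passing beside $\boldsymbol{v}$ without crossing the leaves it enters and exits along always cuts off a wedge, so $\int\beta$ has a fixed nonzero sign. Exact vanishing therefore needs two things: the bump must cross the arc's own leaf onto the side opposite that wedge (the opposite-sign lobe you mention), and you then apply the intermediate value theorem in the bump size. Your placement of bumps in the strips between consecutive segments accommodates this.
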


\begin{proof}
We isotope the segments of $\aa$ in~$H$ so that they follow the leaves of the characteristic foliation, except in a small neighborhood of the vertex $\boldsymbol{v}$, where they are slightly displaced to avoid intersecting each other. For each segment $\ll_i$ we again consider
\[
h_i(s):=\overline{(\ll_i)_z}(s)-(\ll_i)_z(s).
\]
We can assume that $h_i$ is constant on $[0,\tfrac{1}{3}]$ and on $[\tfrac{2}{3},1]$, since the segment is already Legendrian there. By making each segment follow the leaves of the foliation more closely, we can ensure that $|h_i(0)-h_i(1)|<\varepsilon_0$ for any prescribed $\varepsilon_0>0$. We then correct this discrepancy to exactly $0$ by introducing a small ``bump'' near the vertex, as in Figure~\ref{fig:segment_0handles1}, whose Legendrian realization cancels the mismatch.

\begin{figure}[htbp]
    \centering
    \begin{overpic}[scale=1]{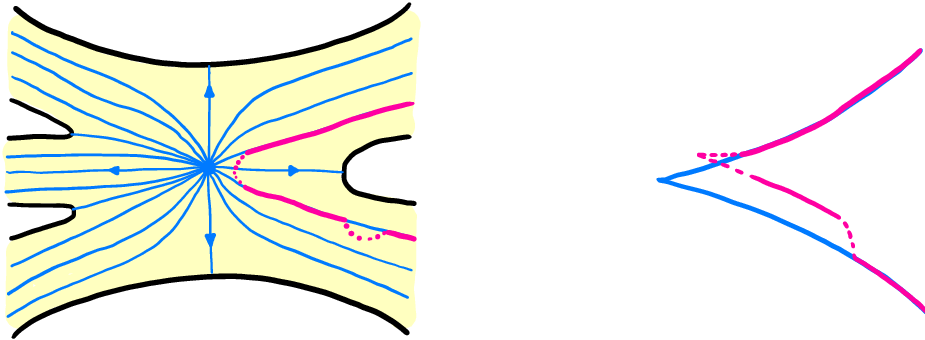}
    \end{overpic}
    \caption{Left: a segment $\ll_i$ in a $0$--handle, isotoped into the desired position (purple). Right: comparison between the leaves of the characteristic foliation (blue) and the Legendrian realization of $\ll_i$ (purple). The bump near the vertex corrects the mismatch in $z$ at the endpoints.}\label{fig:segment_0handles1}
\end{figure}

After this correction, we have $h_i(0)=h_i(1)$ for all segments in $0$--handles.
\end{proof}

\subsubsection{Gluing the segments together}\label{sec:putting_together}

\begin{lemma}\label{lem:glue_segments}
Let $\overline{\ll_i}$ be the Legendrian realizations of the segments of $\aa$ 
constructed above. Then:
\begin{enumerate}
\item There exist constants $z_i$ in the definition of $\overline{\ll_i}$ such that 
      the segments $\overline{\ll_i}$ glue to a Legendrian knot~$L'$.
\item The prominence~$P$ is a discrete version of the function $h$, in the sense that
      for the endpoints $\partial^\pm \ll_i$ of a segment $\ll_i$ we have
      \[
        h_i(0) = \varepsilon_1 P(\partial^-\ll_i)
        \quad \text{and} \quad
        h_i(1) = \varepsilon_1 P(\partial^+\ll_i),
      \]
      where $D_i = \varepsilon_1 \Delta(\ll_i)$.
\item The ribbon surface $\Sigma$ can be perturbed relative to $\partial\Sigma$ 
      so that it contains $L'$, and this perturbation can be chosen arbitrarily 
      $C^0$–small by isotoping $\aa$ closer to the cores of the handles.
\end{enumerate}
\end{lemma}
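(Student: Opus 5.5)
The plan is to fix the free constants $z_i$ inductively along $\aa$ and read off (1) and (2) from the shape of the functions $h_i$ established in Lemmas~\ref{lem:1handles_displacement} and~\ref{lem:0handles}. For (3), we will build the perturbed surface as a graph over $\Sigma$ in the $\partial_z$ direction.

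For (1) and (2), we first choose $z_1$ so that $h_1(0)=0=\varepsilon_1P(\partial^-\ll_1)$. Since $\ll_i(1)=\ll_{i+1}(0)$ as points of $\Sigma$, the segments $\overline{\ll_i}$ and $\overline{\ll_{i+1}}$ share $(x,y)$ coordinates at the junction. They therefore glue continuously exactly when $h_{i+1}(0)=h_i(1)$, and we define $z_{i+1}$ by this condition. Near the endpoints each $\ll_i$ runs along leaves of the characteristic foliation, which are Legendrian, so $h_i$ is locally constant there. Hence the glued curve is smooth (tangents agree), and the concatenation is Legendrian away from the junctions and at them. By induction, using $h_i(1)=h_i(0)+D_i=h_i(0)+\varepsilon_1\Delta(\ll_i)$ in $1$--handles (Lemma~\ref{lem:1handles_displacement}) and $h_i(1)=h_i(0)$ in $0$--handles (Lemma~\ref{lem:0handles}), we obtain $h_i(0)=\varepsilon_1P(\partial^-\ll_i)$ and $h_i(1)=\varepsilon_1P(\partial^+\ll_i)$, which is exactly the recursion defining $P$ in Step~4. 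The curve closes up at $\ll_n\to\ll_1$ because $h_n(1)=\varepsilon_1\sum_i\Delta(\ll_i)=0=h_1(0)$ by~\eqref{eq:displ_sum_zero}; equivalently, $\oint_\aa x\,dy=0$. Embeddedness of $L'$ follows because it projects injectively to $\Sigma$ along $\partial_z$: it is the image of the simple curve $\aa$ under $p\mapsto p+h(p)\partial_z$.

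For (3), we regard $h$ as a function on $\aa\subset\Sigma$ and extend it to a smooth function $\widetilde h\colon\Sigma\to\R$ that vanishes near $\partial\Sigma$. The surface $\Sigma_{\widetilde h}=\{p+\widetilde h(p)\partial_z\}$ is a graph over $\Sigma$ in the Reeb direction, so it is transverse to $\partial_z$. The straight-line family $t\widetilde h$ gives an isotopy fixed near the boundary, through surfaces transverse to $\partial_z$, and $L'\subset\Sigma_{\widetilde h}$. For $C^0$--smallness, $|h|\le\varepsilon_1\max|P|$, and $\varepsilon_1$ can be made arbitrarily small by pushing the segments toward the cores as in Lemma~\ref{lem:1handles_displacement}. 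The extension can be chosen with $\sup|\widetilde h|\le\sup|h|$, for example by cutting off in a tubular neighbourhood of $\aa$.

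The main obstacle is the extension in (3) when several parallel segments in the same handle carry different heights. Neighbouring strands may be very close in $\Sigma$ while differing in height by a multiple of $\varepsilon_1$, so $\widetilde h$ must interpolate steeply between them. This is harmless here, since only transversality to $\partial_z$ is required and every graph over $\Sigma$ in the $\partial_z$ direction is automatically transverse. We still have to check that $\aa$ is disjoint from a fixed collar of $\partial\Sigma$, which holds because Step~1 keeps $\aa$ in the interior. Finally, we need that the isotopy class of $L'$ in $\Sigma_{\widetilde h}$ is that of $\aa$; this holds because the identification $\Sigma\to\Sigma_{\widetilde h}$ sends $\aa$ to $L'$.
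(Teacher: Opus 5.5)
Your proposal is correct and follows the paper's proof: the same inductive choice $z_1=(\ll_1)_z(0)$ with $h_{i+1}(0)=h_i(1)$, closure from $\sum_i\Delta(\ll_i)=0$ (equivalently $\oint_\aa x\,dy=0$), (2) by the same induction, and (3) as a vertical push-off of $\Sigma$, which you spell out more explicitly than the paper's one-line justification. One small correction concerns injectivity. The Lagrangian projection of $\Sigma$ is only an immersion, so the map $p\mapsto p+\widetilde h(p)\,\partial_z$ is injective, and hence $L'$ and $\Sigma_{\widetilde h}$ are embedded, only once $\sup|\widetilde h|$ is below half the vertical gap between sheets of $\Sigma$ lying over a common point of the $(x,y)$--plane; the $C^0$--smallness you already arrange supplies this, but note that $h$ also has an $O(\varepsilon_0)$ excursion near the vertices in the $0$--handles, so the bound is not literally $\varepsilon_1\max|P|$.
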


\begin{proof}
Set
\[
z_1 := (\ll_1)_z(0),
\qquad
z_{i+1} := \overline{(\ll_i)_z}(1)\quad\text{for }i=1,\dots,n-1.
\]
Now the segments glue together to give a Legendrian curve $L'$. To prove that $L'$ closes up to a knot we need to show that $\overline{(\ll_n)_z}(1)=(\ll_1)_z(0)$. This closure condition is equivalent to
\[
\oint_\aa x\,dy = 0,
\]
and one verifies using~\eqref{eq:displ_prop_to} and the definition of the prominence that this holds precisely because $\sum_i\Delta(\ll_i)=0$ (equation~\eqref{eq:displ_sum_zero}). This proves (1).

Assertion (2) is an easy consequence of Lemma~\ref{lem:1handles_displacement}(2) and (3), together with the fact that, after the choice of $z_i$ as above, $h_1(0)=0=P(\partial^-\ll_1)$.

The ribbon surface $\Sigma$ can indeed be perturbed as described in Theorem~\ref{thm:algorithm}, since $L'$ only differs from $\aa$ in the $z$ component, and the surface $\Sigma$ is transverse to $\partial_z$ and contains $\aa$ in its interior. An example of such a perturbation is shown schematically in Figure~\ref{fig:perturbation_sigma}.

\begin{figure}[htbp]
    \centering
    \begin{overpic}[scale=1]{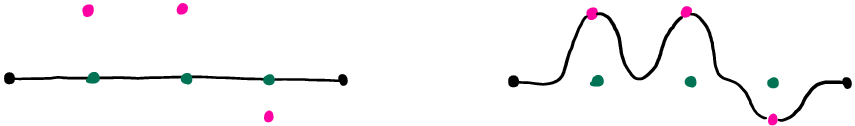}
    \end{overpic}
    \caption{Left: a section of the original ribbon~$\Sigma$: green points represent $\aa$, pink points represent~$L'$. Right: a small perturbation of~$\Sigma$ that contains~$L'$.}\label{fig:perturbation_sigma}
\end{figure}

Because the quantities $\varepsilon_1$ and $\varepsilon_0$ in the construction of the $h_i$ can be chosen arbitrarily small by isotoping $\aa$ closer to the cores of the handles, this perturbation can be made arbitrarily $C^0$–small. Figure~\ref{fig:perturbation_sigma2} shows schematically how bringing $\aa$ closer to the core reduces the vertical displacement between $\aa$ and~$L'$.

\begin{figure}[htbp]
    \centering
    \begin{overpic}[scale=1]{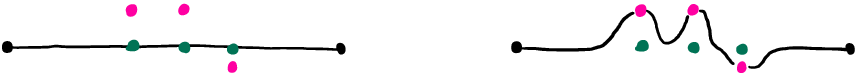}
    \end{overpic}
    \caption{Pushing $\aa$ closer to the core of the handle decreases the vertical distance between $\aa$ and its Legendrian realization~$L'$, allowing the perturbation of~$\Sigma$ to be arbitrarily $C^0$--small.}\label{fig:perturbation_sigma2}
\end{figure}

This proves (3).
\end{proof}

\begin{proof}[Proof of Proposition~\ref{prop:validity_algorithm}]
The fact that the algorithm produces a generic Legendrian knot $L$ is explained 
at the beginning of this section. Consider the Legendrian knot $L'$ given by Lemma~\ref{lem:glue_segments}(1). Assume for a moment that $L=L'$. Then Lemma~\ref{lem:glue_segments}(3) shows that $\Sigma$ can be perturbed relative to $\partial\Sigma$ so that it contains $L=L'$, and this perturbation can be chosen arbitrarily $C^0$–small by isotoping $\aa$ closer to the cores of the handles. Moreover, the isotopy class of $L=L'$ in the perturbation of $\Sigma$ clearly equals the isotopy class of $\aa$, since $L'$ and $\aa$ have the same Lagrangian projection and we have perturbed $\Sigma$ through surfaces transverse to $\partial_z$ ($L'$ is the actual Legendrian realization of $\aa$).

It remains to show that $L=L'$, that is, that the Legendrian segments produced by the algorithm match the Legendrian segments $\overline{\ll_i}$.

For segments of $\aa$ in a $1$--handle, we have already explained how their Legendrian realizations look, up to translation in the $\partial_z$ direction. Essentially, the Legendrian realization $\overline{\ll_i}$ of a segment $\ll_i$ initially looks like a copy of the Legendrian core $\boldsymbol{e}$, translated up or down relative to the Legendrian core by an amount $h_i(0)$. When $\ll_i$ is transverse, it changes its relative height with respect to the Legendrian core, and then it again looks like a copy of $\boldsymbol{e}$ at a relative height of $h_i(1)$.

Given a $1$--handle $H$ with $k$ segments of $\aa$, Step~5 requires us to draw $k$ different copies of $\boldsymbol{e}$ minus a small interval and then merge them together using a prescribed Legendrian braid. The connected components of the $k$ different copies of $\boldsymbol{e}$ minus a small interval correspond to the parts of the Legendrian realization of the segments where these realizations look like copies of $\boldsymbol{e}$. The strands of the braid that connect these components represent the Legendrian realizations of the transverse parts of the segments. This procedure produces Legendrian segments that properly represent the Legendrian realization of the $k$ segments, as can be verified by carefully examining all possible cases.

We illustrate this with a concrete example. Assume that $\ll_i$ and $\ll_j$ are two segments of $\aa$ oriented in the same direction in a $1$--handle $H$. Their relative position in the $1$--handle $H$ might be as shown in Figure~\ref{fig:Fredy1}(a) or Figure~\ref{fig:Fredy1}(b).\footnote{It is part of the example that the $1$--handle looks as in these figures.}

\begin{figure}[htbp]
    \centering
    \begin{overpic}[scale=1]{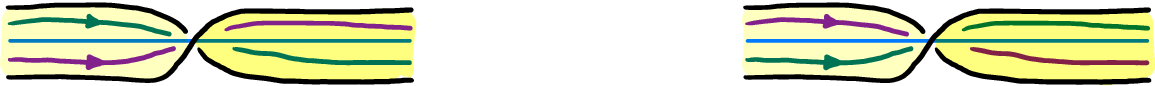}
    \put(-6,4){(a)}
    \put(58,4){(b)}
    \end{overpic}
    \caption{(a) The segment that starts higher (green) is $\ll_i$, while the lower segment (purple) is $\ll_j$. (b) The positions of $\ll_i$ and $\ll_j$ are reversed.}\label{fig:Fredy1}
\end{figure}

This relative position may be determined by the prominence of their endpoints. Without loss of generality, assume $h_i(0) \geq h_j(0)$. Then we have three cases: $h_i(1) > h_j(1)$, $h_i(1) = h_j(1)$, or $h_i(1) < h_j(1)$.

In the case where $h_i(1) > h_j(1)$ and $h_i(0) > h_j(0)$, Step~5 ensures that the Legendrian realization $\overline{\ll_i}$ of $\ll_i$ is always above the Legendrian realization $\overline{\ll_j}$ of $\ll_j$. This is consistent with the definition of the function $h$, as this function encodes how high the Legendrian realization of a given segment is with respect to the Legendrian core of the handle. See Figure~\ref{fig:Fredy2} for an example.

\begin{figure}[htbp]
    \centering
    \begin{overpic}[scale=1]{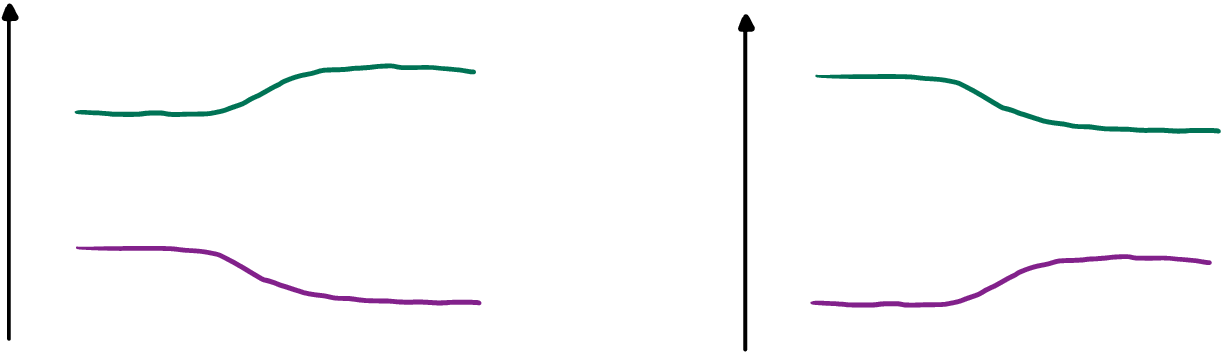}
    \put(40,21){$\overline{\ll_i}$}
    \put(40,4){$\overline{\ll_j}$}
    \put(101,18){$\overline{\ll_i}$}
    \put(101,7){$\overline{\ll_j}$}
    \end{overpic}
    \caption{Left: the Legendrian realization of $\ll_i$ and $\ll_j$ in the situation of Figure~\ref{fig:Fredy1}(a). Right: the Legendrian realization of $\ll_i$ and $\ll_j$ in the situation of Figure~\ref{fig:Fredy1}(b).}\label{fig:Fredy2}
\end{figure}

If $h_i(1) > h_j(1)$ and $h_i(0) = h_j(0)$, then again Step~5 tells us that the Legendrian realization of $\ll_i$ is a Legendrian segment always above the Legendrian realization of $\ll_j$. The reason for this is slightly less obvious, as $h_i(0) = h_j(0)$ indicates that the Legendrian realizations of the two segments start at the same relative height. More precisely, this means that the two Legendrian realizations are initially both embedded in a copy of the ribbon surface translated vertically by an amount $h_i(0) = h_j(0)$. The fact that $h_i(1) > h_j(1)$ indicates that the Legendrian realization of the segment $\ll_i$ then jumps to a copy of the ribbon surface that is higher than the copy of the ribbon surface to which the Legendrian realization of $\ll_j$ jumps. Because of this, it follows that the relative position of the two segments in $H$ is as in Figure~\ref{fig:Fredy1}(a), and their Legendrian realizations are as in Figure~\ref{fig:Fredy3}(a).

\begin{figure}[htbp]
    \centering
    \begin{overpic}[scale=1]{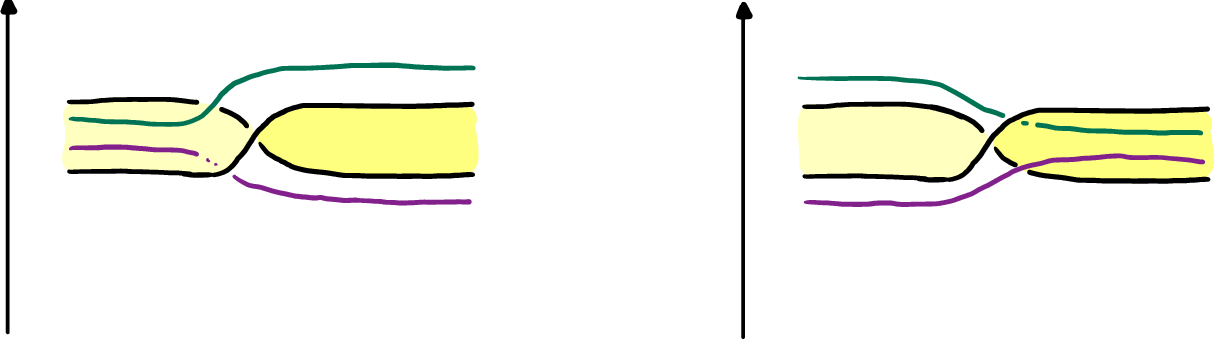}
    \put(-5,23){(a)}
    \put(55,23){(b)}
    \put(40,21){$\overline{\ll_i}$}
    \put(40,10){$\overline{\ll_j}$}
    \put(70,23){$\overline{\ll_i}$}
    \put(70,7){$\overline{\ll_j}$}   
    \end{overpic}
    \caption{(a) The Legendrian segments $\overline{\ll_i}$ and $\overline{\ll_j}$ are initially embedded in the same ribbon surface at height $h_i(0) = h_j(0)$. The $1$--handle of this ribbon surface is depicted. (b) The Legendrian segments $\overline{\ll_i}$ and $\overline{\ll_j}$ are eventually embedded in the same ribbon surface at height $h_i(1) = h_j(1)$. Again, the $1$--handle of this ribbon surface is depicted.}\label{fig:Fredy3}
\end{figure}

If $h_i(1) = h_j(1)$, then we have $h_i(0) > h_j(0)$, and it follows that the relative position of the two segments in $H$ is as in Figure~\ref{fig:Fredy1}(b). Therefore, the Legendrian realization of the segment $\ll_i$ lies entirely above the Legendrian realization of the segment $\ll_j$. See Figure~\ref{fig:Fredy3}(b).

If $h_i(1) < h_j(1)$ and $h_i(0) = h_j(0)$, then the relative position of the two segments in $H$ is as in Figure~\ref{fig:Fredy1}(a). In this case, the Legendrian realization of the segment $\ll_j$ lies entirely above the Legendrian realization of the segment $\ll_i$. See Figure~\ref{fig:Fredy4}(a).

\begin{figure}[htbp]
    \centering
    \begin{overpic}[scale=1]{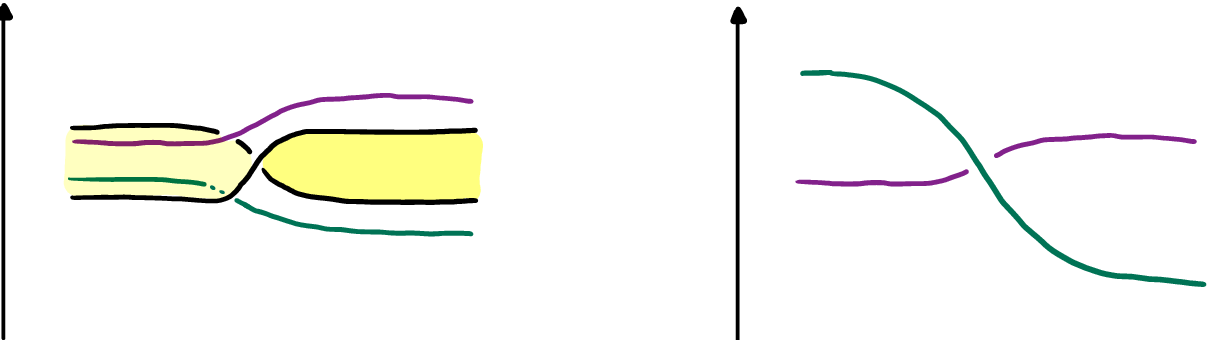}
    \put(-5,23){(a)}
    \put(55,23){(b)}
    \end{overpic}
    \caption{(a) The Legendrian segments $\overline{\ll_i}$ and $\overline{\ll_j}$ are initially embedded in the same ribbon surface at height $h_i(0) = h_j(0)$. The $1$--handle of this ribbon surface is depicted. (b) The Legendrian segments $\overline{\ll_i}$ and $\overline{\ll_j}$ cross.}\label{fig:Fredy4}
\end{figure}

The last remaining case is $h_i(1) < h_j(1)$ and $h_i(0) > h_j(0)$. In this case, the relative position of the two segments in $H$ is as in Figure~\ref{fig:Fredy1}(b), and the (front projection of the) Legendrian realizations of the two segments cross, as shown in Figure~\ref{fig:Fredy4}(b).

To fully justify why the braid created in Step~5 agrees with the front projection of the Legendrian realizations of the segments of $\aa$ in the $1$--handles, we would also need to analyze all other cases,\footnote{That is, cases where the two segments are oriented the other way around, have opposite orientations, or when the handle looks different from the one in Figure~\ref{fig:Fredy1}.} but the proof in all of these cases is very similar, so we omit it.

Next, we verify that Step~6 also produces Legendrian segments that match the Legendrian realization of the segments in a $0$--handle. For this, let $\ll_i$ and $\ll_j$ be two segments of $\aa$ in a $0$--handle $H$, and denote by $\boldsymbol{e_1}, \dots, \boldsymbol{e_n}$ the segments of the Legendrian skeleton of $H$. Step~6 claims that the Legendrian realization of a segment looks like a concatenation of two of the segments $\boldsymbol{e_1}, \dots, \boldsymbol{e_n}$, possibly perturbed and possibly translated vertically by a certain amount dictated by the prominence of these segments. 

If $P(\ll_i)>P(\ll_j)$, then it is clear that the Legendrian realization $\overline{\ll_i}$ of $\ll_i$ has to be higher than the Legendrian realization $\overline{\ll_j}$ of $\ll_j$, and therefore, independently of the precise shape of the two Legendrian segments, up to Legendrian isotopy, the relative position of these segments will be as explained in Step~6 and illustrated with an example in Figure~\ref{fig:justification_0handles}(b) (see also Figure~\ref{fig:Leg_real_0handle1}). If $P(\ll_i)=P(\ll_j)$, then the two Legendrian realizations $\overline{\ll_i}$ and $\overline{\ll_j}$ are both embedded in a perturbation of the same ribbon surface. Because of that, it is again clear why in Step~6 it is required that the segments $\overline{\ll_i}$ and $\overline{\ll_j}$ match the visual appearance of the segments $\ll_i$ and $\ll_j$ in the handle $H$. See Figure~\ref{fig:justification_0handles}(c) (see also Figure~\ref{fig:Leg_real_0handle2}) for an example showing this.

\begin{figure}[htbp]
    \centering
    \begin{overpic}[scale=1]{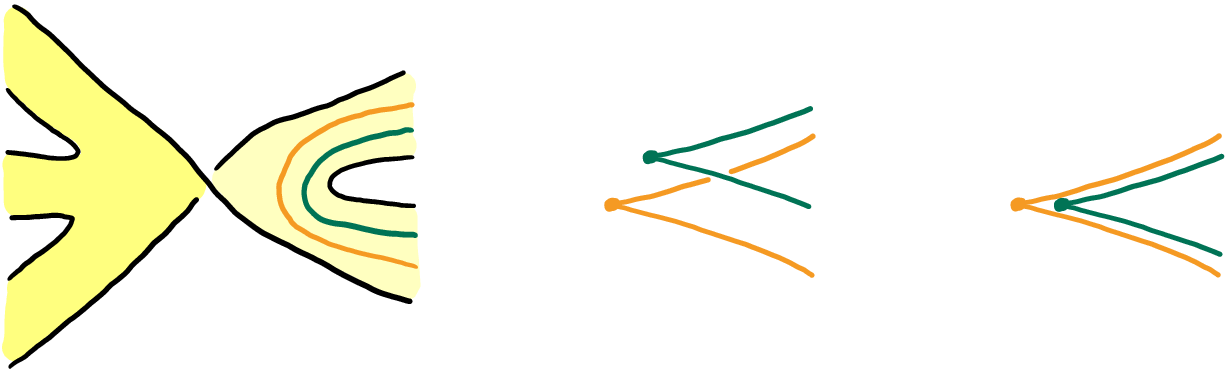}
        \put(-5,28){(a)}
        \put(45,28){(b)}
        \put(78,28){(c)}
    \end{overpic}
    \caption{(a) Segments of $\aa$ in a $0$--handle. (b) Legendrian realizations when the prominences are distinct: higher prominence means higher $z$--level. (c) Legendrian realizations when the prominences agree: all lie on the same level and are separated by a small perturbation.}\label{fig:justification_0handles}
\end{figure}

This proves all assertions in Proposition~\ref{prop:validity_algorithm} and hence 
completes the justification of the algorithm.
\end{proof}

\section{Applications: from open books to contact surgery diagrams}\label{sec:applications_OB}

The next result can be viewed as a refinement of Avdek's algorithm in~\cite{Avdek13}.

\begin{corollary}\label{corollary}
    Given an abstract open book decomposition $(\Sigma,\phi)$ whose monodromy $\phi$ is given explicitly as a composition of Dehn twists along homologically nontrivial simple closed curves, there exists an algorithm that produces a contact surgery diagram for the contact manifold supported by the open book $(\Sigma,\phi)$. 
    
    This algorithm produces a contact surgery link $\boldsymbol{L}=\boldsymbol{L_1}\cup\boldsymbol{L_2}$ for which the components of $\boldsymbol{L_1}$ are in bijection with the Dehn twists in the factorization of $\phi$, and all the components of $\boldsymbol{L_2}$ are $+1$–framed Legendrian unknots with Thurston–Bennequin number $tb=-1$.
\end{corollary}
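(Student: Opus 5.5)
The plan is to reduce to Avdek's strategy: realize the abstract page $\Sigma$ as a ribbon surface of a Legendrian graph in $(S^3,\xist)$ that is a page of an open book supporting $\xist$. Then read the monodromy $\phi$ as contact $(\pm1)$--surgeries on Legendrian realizations of the twisting curves, placed on distinct pages, using Proposition~\ref{prop:Dehn_surgery_Dehn_twist}. The extra link $\boldsymbol{L_2}$ will cancel the monodromy of this auxiliary open book of $\xist$.

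\textbf{First step.} Choose a Legendrian graph $\G\subset(\R^3,\xist)$ whose ribbon is abstractly diffeomorphic to $\Sigma$. Concretely, present $\Sigma$ as a single $0$--handle with $1$--handles attached; this data is read off from the given surface. Take $\G$ to be a wedge of standard Legendrian unknots with $tb=-1$, one petal per $1$--handle. Arrange the petals' mutual position at the vertex so that the cyclic orders, and hence the boundary components, match $\Sigma$. The ribbon of such a graph is then a planar-like plumbing. To realize arbitrary genus, I would include petals that pass through one another in the front in the standard Avdek pattern. Since the petals are unknotted and pairwise linked appropriately, the ribbon $R_\G$ is a page of an open book for $(S^3,\xist)$ whose monodromy is a product of positive Dehn twists $\tau_{c_1}\cdots\tau_{c_m}$. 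Here the $c_j$ are explicit curves: the cores of the petals, plus curves around pairs of adjacent petals when the construction forces them. This follows from Theorem~\ref{thm:contact_cell} once $\G$ is completed to the $1$--skeleton of a contact cell decomposition, or directly via Avdek's computation. The identification $\Sigma\cong R_\G$ transports each twisting curve of $\phi$ to a homologically nontrivial curve on the abstract ribbon $\A$.

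\textbf{Second step.} Write $(\Sigma,\phi)=(R_\G,\; h\circ h^{-1}\circ\phi)$ with $h=\tau_{c_1}\cdots\tau_{c_m}$. The factor $h^{-1}$ is a product of negative twists along the curves $c_j$. By Proposition~\ref{prop:Dehn_surgery_Dehn_twist}, each such negative twist is effected by contact $(+1)$--surgery on a Legendrian realization of $c_j$. Since the $c_j$ are petal cores or their standard combinations, these realizations are $tb=-1$ unknots; this produces $\boldsymbol{L_2}$. Each twist $\tau_{\aa_i}^{\pm}$ in the factorization of $\phi$ is effected by contact $\mp1$--surgery on a Legendrian realization of $\aa_i$, which produces $\boldsymbol{L_1}$.

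\textbf{Third step.} Realize each curve explicitly. For the $i$-th factor, apply Theorem~\ref{thm:algorithm} to $\G$ and the curve, yielding a front projection. Then translate it vertically by a multiple of $\partial_z$; the Reeb flow moves pages, and the $C^0$--smallness in Theorem~\ref{thm:algorithm}(3) lets distinct factors sit in disjoint $z$--slabs, i.e.\ on distinct pages in the order Proposition~\ref{prop:Dehn_surgery_Dehn_twist} requires. Theorem~\ref{thm:uniqueness_ribbon} guarantees the result is independent of choices up to Legendrian isotopy. Finally, pass from $\R^3$ to $S^3$ via the contactomorphism.

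\textbf{Main obstacle.} The hardest step is the first: producing an explicit Legendrian graph with ribbon diffeomorphic to $\Sigma$ that is a supporting page, with an explicit monodromy, when the binding is disconnected. I would try to adapt Avdek's construction with extra petals and verify the page property through Theorem~\ref{thm:contact_cell}. If that fails, I would instead stabilize $\Sigma$ positively to reach Avdek's connected-binding case; this only adds further $tb=-1$ unknots to $\boldsymbol{L_1}$ via Legendrian realizations of homologically nontrivial stabilization curves.
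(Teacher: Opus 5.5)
Your strategy is the paper's. You realize $\Sigma$ as a ribbon $R_\G$ that is a page of an open book for $(S^3,\xist)$ whose monodromy $\psi$ is a product of positive twists along $tb=-1$ Legendrian unknots. You cancel $\psi$ with contact $(+1)$--surgeries on vertical push-offs of those unknots, which gives $\boldsymbol{L_2}$. You realize each $\aa_i$ by Theorem~\ref{thm:algorithm} on its own slice $R_\G\times\{t_i\}$ with the opposite-sign surgery, which gives $\boldsymbol{L_1}$. Proposition~\ref{prop:Dehn_surgery_Dehn_twist} then finishes the argument. Your signs and page ordering are right.

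The real shortfall is the step you flag as the main obstacle: you never produce the model page, and the paper does so explicitly.
\begin{itemize}
\item It concatenates $g$ copies of a graph $A$, made of two $tb=-1$ Legendrian unknots $A_1,A_2$ touching in one point. Its ribbon has genus one and connected boundary, with monodromy $\tau^+_{A_2}\tau^+_{A_1}$.
\item It adds $b-1$ copies of a graph $B$ whose ribbon is an annulus, with monodromy $\tau^+_B$.
\item The concatenated graph is the $1$--skeleton of a contact cell decomposition, so Theorem~\ref{thm:contact_cell} applies. The monodromy is read off as $\psi=\prod_i(\tau^+_{A_2^i}\tau^+_{A_1^i})\prod_j\tau^+_{B^j}$.
\end{itemize}
This settles both of your worries. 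Each $B$-piece adds one boundary component, so disconnected binding costs nothing. Every monodromy curve is a circle of $\G$ itself, so it is already a $tb=-1$ Legendrian unknot on the ribbon. You never need ``curves around pairs of adjacent petals,'' whose page framing you would otherwise have to check. Your unlinked petals are essentially $B$-pieces and your linked pairs $A$-pieces, so a concrete version of your Step~1 lands on the paper's model. As written, though, the claim that $R_\G$ is a supporting page with known monodromy because the petals are ``unknotted and pairwise linked appropriately'' is not justified.

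Two of your suggested routes would not work as stated.
\begin{itemize}
\item Theorem~\ref{thm:contact_cell} must be applied to a graph that already is the entire $1$--skeleton. If ``completing'' $\G$ means adding edges, the new ribbon is a larger surface, no longer diffeomorphic to $\Sigma$.
\item The stabilization fallback breaks the stated form of the output. Each positive stabilization adds a positive twist along a stabilization curve, which requires a contact $(-1)$--surgery. That component can go neither into $\boldsymbol{L_2}$, which is all $(+1)$--framed, nor into $\boldsymbol{L_1}$, which must be in bijection with the twists of $\phi$.
\end{itemize}
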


Before delving into the proof, we first describe a suitable family of open books for $(S^3,\xist)$ whose pages have arbitrary genus and number of boundary components. We start from two basic open books:

\begin{itemize}
    \item one with page a genus~$1$ surface with connected boundary;
    \item one with page an annulus.
\end{itemize}

Each of these pages can be realized as the ribbon of a specific Legendrian graph in $(\R^3,\xist)$, shown in Figure~\ref{fig:corollary1}.

\begin{figure}[htbp]
    \centering
    \begin{overpic}[scale=1]{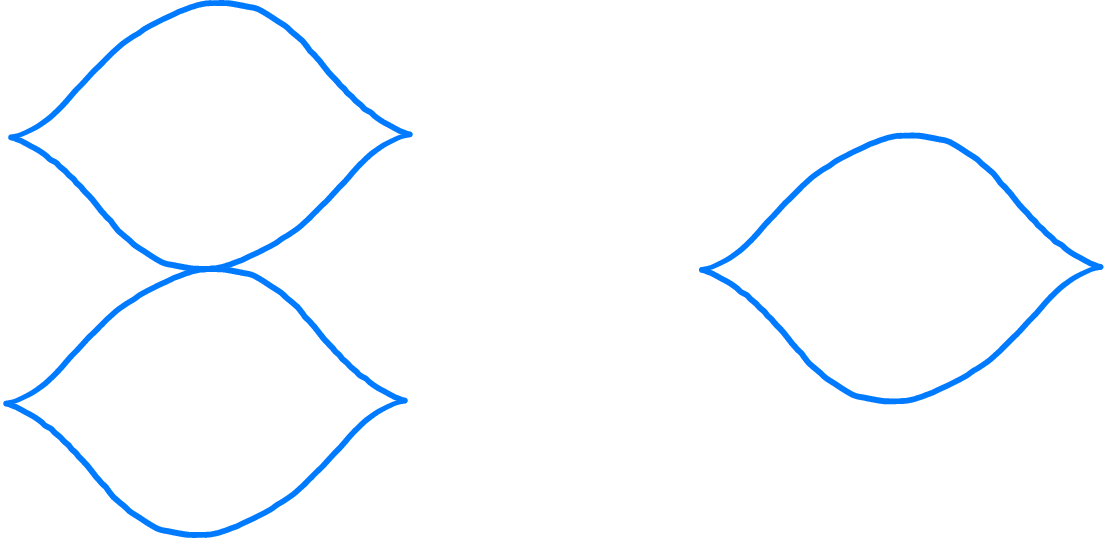}
        \put(30,44){$A$}
        \put(94,30){$B$}
    \end{overpic}
    \caption{Left: the Legendrian graph $A$ whose ribbon is a page of an open book with genus~$1$ and connected binding. Right: the Legendrian graph $B$ whose ribbon is an annular page.}\label{fig:corollary1}
\end{figure}

Let $A$ denote the graph on the left and $B$ the graph on the right. By assembling several copies of $A$ and $B$ as in Figure~\ref{fig:corollary2}, we obtain a Legendrian graph whose ribbon has genus equal to the number of $A$–pieces and number of boundary components equal to one plus the number of $B$–pieces.

\begin{figure}[htbp]
    \centering
    \begin{overpic}[scale=1]{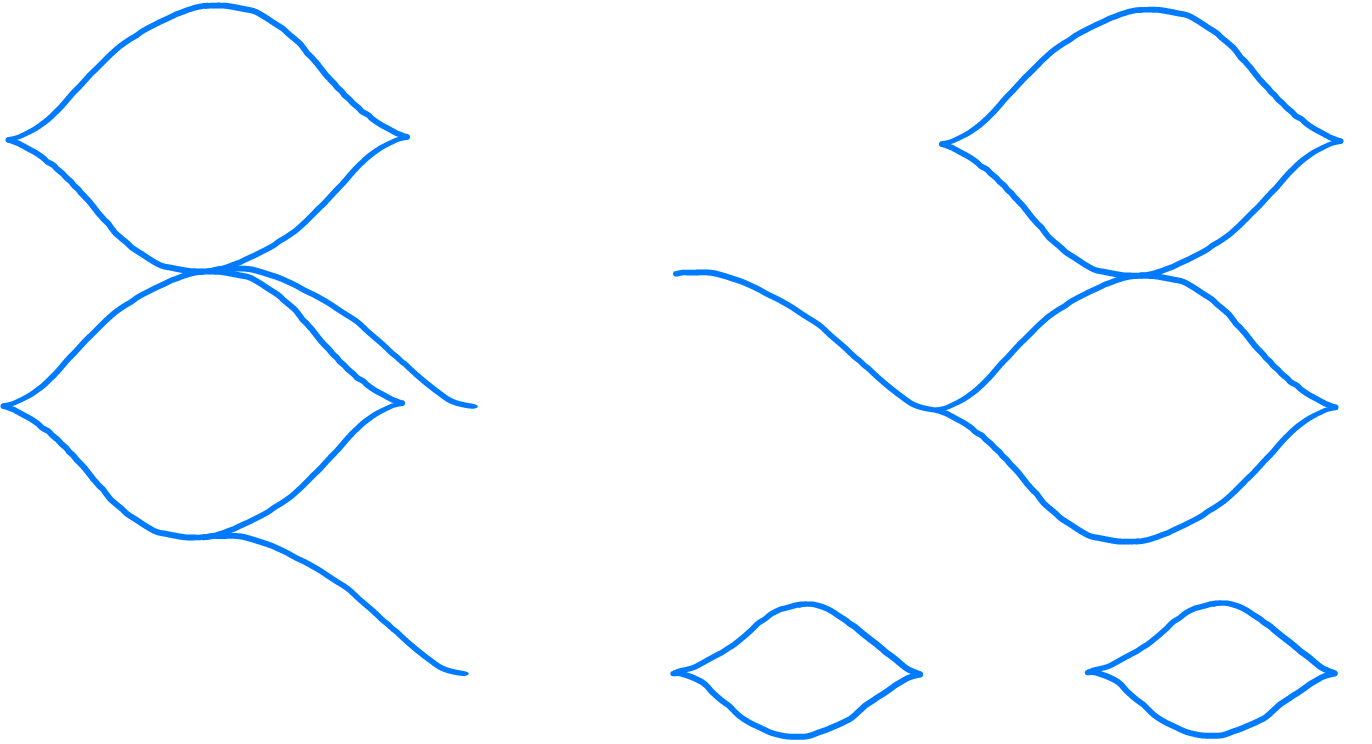}
        \put(39,30){$\boldsymbol\cdot$}
        \put(41,30){$\boldsymbol\cdot$}
        \put(43,30){$\boldsymbol\cdot$}
        \put(39,4){$\boldsymbol\cdot$}
        \put(41,4){$\boldsymbol\cdot$}
        \put(43,4){$\boldsymbol\cdot$}
        \put(70,4){$\boldsymbol\cdot$}
        \put(72,4){$\boldsymbol\cdot$}
        \put(74,4){$\boldsymbol\cdot$}
    \end{overpic}
    \caption{A Legendrian graph obtained by concatenating pieces of type $A$ and~$B$. Its ribbon has genus equal to the number of $A$–pieces and boundary components equal to one plus the number of $B$–pieces.}\label{fig:corollary2}
\end{figure}

The ribbons of these graphs are pages of open books supporting $(S^3,\xist)$ (because of Theorem~\ref{thm:contact_cell}). Thus we have explicit models of open books for $(S^3,\xist)$ with any prescribed topology of the page.

\begin{proof}[Proof of Corollary~\ref{corollary}]
Let $(\Sigma,\phi)$ be an abstract open book as in the statement:
$\Sigma$ is a compact oriented surface with nonempty boundary, and
$\phi\in\mathrm{MCG}(\Sigma,\partial\Sigma)$ is given by a factorization
\[
  \phi
  \;=\;
  \tau_{\aa_m}^{\delta_m}\cdots\tau_{\aa_1}^{\delta_1},
  \qquad
  \delta_i\in\{\pm1\},
\]
where each $\aa_i$ is a homologically nontrivial simple closed curve
on~$\Sigma$.

\smallskip

\noindent
{\bf Step 1: Realize $\Sigma$ as a ribbon in $(S^3,\xist)$.}
By the construction in Figure~\ref{fig:corollary2}, we can build a
Legendrian graph $G\subset(\R^3,\xist)\subset(S^3,\xist)$ by
concatenating pieces of type $A$ and $B$ with the following property. The ribbon $R_G$ of
this graph is a compact surface with
\[
  R_G \cong \Sigma
\]
as oriented surfaces with boundary, and by construction it is transverse
to $\partial_z$. Moreover, the Legendrian graph $G$ is the
$1$--skeleton of a contact cell decomposition of $(S^3,\xist)$, so
Theorem~\ref{thm:contact_cell} implies that $R_G$ is a page of an open book
supporting $(S^3,\xist)$. We denote this open book by
\[
  (B,\pi)
  \quad\text{with corresponding abstract open book}\quad
  (R_G,\psi),
\]
where $\psi\in\MCG(R_G,\partial R_G)$ is the monodromy.

Now notice that the graph $A$ can be seen as two Legendrian unknots touching each other in one point. Call $A_1$ the top one and $A_2$ the bottom one. Since we may have many copies of $A$ and $B$ in our graph $G$, denote them by $A^1=A_1^1\cup A^1_2, \dots, A^k=A_1^k\cup A^k_2$ and $B^1,\dots, B^h$. With this notation, $\psi$ is given explicitly as a product of
right–handed Dehn twists along these curves (note that each $A^i_1$, $A^i_2$ and $B^j$ can be seen as a curve on $R_G$). More precisely, we
may write
\[
  \psi
  =\prod_{i=1}^k(\tau^+_{A_2^i}\tau^+_{A_1^i}) \prod_{j=1}^h
  \tau^+_{B^j},
\]
see \cite{Avdek13}.

Fix once and for all an orientation–preserving diffeomorphism
\[
  f\colon \Sigma \longrightarrow R_G.
\]
Via~$f$, we will freely
identify $\Sigma$ with $R_G$; in particular, the abstract open book
$(\Sigma,\phi)$ is equivalent to $(R_G,\phi')$, where
\[
  \phi' := f\phi f^{-1}
  \in\MCG(R_G,\partial R_G),
\]
and the given factorization of $\phi$ yields a factorization
\[
  \phi'
  =
  \tau_{\aa'_m}^{\delta_m}\cdots\tau_{\aa'_1}^{\delta_1},
  \qquad
  \aa'_i := f(\aa_i)\subset R_G.
\]
Since the $\aa_i$ are homologically nontrivial in $\Sigma$, the
curves $\aa'_i$ are homologically nontrivial in~$R_G$.

\smallskip

\noindent
{\bf Step 2: Legendrian realization of the curves on the ribbon.}
The ribbon $R_G$ is convex with dividing set equal to
$\partial R_G$. On such a surface, a simple closed curve
is nonisolating if and only if it is homologically nontrivial; in
particular, each $\aa'_i$ is nonisolating. By the Legendrian
realization principle and its explicit version given in our algorithm
(Theorem~\ref{thm:algorithm}), for each $i=1,\dots,m$ we can explicitly obtain a Legendrian knot
\[
  L_i \subset R_G\times\{t_i\} \subset (S^3,\xist)
\]
isotopic to $\aa'_i$ in $R_G$. We take care that $t_i>t_j$ if $i>j$ and that $t_i<0$ for each $i$.

\smallskip

\noindent
{\bf Step 3: Constructing a surgery link that encodes the monodromy.}
We now build a contact surgery link
\[
  \boldsymbol{L}
  =
  \Bigl(\bigcup_{i=1}^m L_i^{-\delta_i}\Bigr)
  \;\cup\;
  \Bigl(\bigcup_{i=1}^k (A^i_1\times\{2\varepsilon\})^{+}\cup(A^i_2\times\{\varepsilon\})^+ \Bigr)
  \;\cup\;
  \Bigl(\bigcup_{j=1}^h (B^j\times\{\varepsilon\})^{+}\Bigr)
  \subset (S^3,\xist).
\]

Thus on the components $L_i$ we perform contact
$(-\delta_i)$–surgery, and on the others contact $+1$–surgery. This is the desired contact surgery link.

\begin{remark}
Just for reference, the Legendrian link $(A^i_1\times\{2\varepsilon\})^{+}\cup(A^i_2\times\{\varepsilon\})^+$ is the linked configuration in which the component corresponding to $A_1^i$ lies above the component corresponding to $A_2^i$, as opposed to $(A^i_1\times\{\varepsilon\})^{+}\cup(A^i_2\times\{2\varepsilon\})^+$ where the vertical order is reversed (up to the surgery coefficients).
\end{remark}

We now explain why this procedure works.

\smallskip

\noindent
{\bf Effect of surgery on the monodromy.}
Proposition~\ref{prop:Dehn_surgery_Dehn_twist} expresses the effect of
contact $\pm1$–surgery on Legendrian knots contained in distinct pages of a
supporting open book in terms of Dehn twists on the page. In our
setting, notice that our link $\boldsymbol{L}$ is compatible with $R_G$, and so its components are embedded in pages of the open book $(B,\pi)$ (since $R_G$ is a page of this open book). Therefore the contact manifold represented by the contact surgery link $\boldsymbol{L}$ is supported by the open book
\[
  \bigl(R_G,\psi\circ\tau_{\boldsymbol{L}}\bigr),
\]
where $\tau_{\boldsymbol{L}}\in\MCG(R_G,\partial R_G)$ is the mapping
class obtained by composing Dehn twists along the curves represented by
the components of $\boldsymbol{L}$, with exponents determined by the
surgery coefficients. Concretely,
\[
  \tau_{\boldsymbol{L}}
  =
  \prod_{j=1}^h
  \tau^-_{B^j}\prod_{i=1}^k(\tau^-_{A_1^i}\tau^-_{A_2^i}) \;\tau_{\aa'_m}^{\delta_m}\cdots\tau_{\aa'_1}^{\delta_1}.
\]

Since, by the construction of $(R_G,\psi)$, we know that
\[
  \psi
  = \prod_{i=1}^k(\tau^+_{A_2^i}\tau^+_{A_1^i}) \prod_{j=1}^h
  \tau^+_{B^j},
\]
we obtain
\[
  \psi\circ\tau_{\boldsymbol{L}}
  = \psi\circ\psi^{-1}\circ\phi'
  = \phi'.
\]

Thus the contact manifold obtained by contact surgery on the Legendrian
link $\boldsymbol{L}\subset(S^3,\xist)$ is supported by the abstract
open book $(R_G,\phi')$.

\smallskip

\noindent
{\bf Identifying the resulting contact manifold.}
By construction, $(R_G,\phi')$ is equivalent to the original
abstract open book $(\Sigma,\phi)$ via the diffeomorphism $f\colon\Sigma\to R_G$. Therefore the contact manifold supported by $(R_G,\phi')$ is contactomorphic to the contact manifold supported by $(\Sigma,\phi)$.

Summarizing, starting from the explicit Dehn twist factorization of
$\phi$, the construction above:
\begin{itemize}
  \item builds a Legendrian graph $G$ whose ribbon is identified with
  $\Sigma$;
  \item realizes the Dehn twist curves in the factorization of
  $\phi$ (and those in the factorization of $\psi$) as explicit
  Legendrian knots on different slices $R_G\times\{t\}$, using the algorithm
  of Theorem~\ref{thm:algorithm};
  \item assigns to these Legendrian knots explicit surgery coefficients
  $\pm1$ so that the resulting surgery link $\boldsymbol{L}\subset
  (S^3,\xist)$ yields a contact manifold supported by
  $(\Sigma,\phi)$. The sublink coming from the Legendrian realization of the curves in the factorization of $\psi$ is used to cancel the monodromy of the open book $(B,\pi)$ for the standard contact sphere, while the other components are used to produce the monodromy $\phi$.
\end{itemize}
All steps are algorithmic and depend only on the data
$(\Sigma,\phi=\tau_{\aa_m}^{\delta_m}\cdots\tau_{\aa_1}^{\delta_1})$.
Hence we have produced the desired algorithm that associates to
$(\Sigma,\phi)$ a contact surgery diagram for the contact
manifold it supports, proving Corollary~\ref{corollary}.
\end{proof}

\printbibliography

\end{document}